\theoremstyle{definition}
\newtheorem{theorem}{Theorem}[section]
\newtheorem{proposition}[theorem]{Proposition}
\newtheorem{lemma}[theorem]{Lemma}
\newtheorem{remark}[theorem]{Remark}
\newtheorem{problem}[theorem]{Problem}
\newtheorem{example}[theorem]{Example}
\newtheorem{corollary}[theorem]{Corollary}
\renewcommand{\epsilon}{\varepsilon}    
\begin{document}

\title{Max-convolution semigroups and extreme values in limit theorems for the free multiplicative convolution}
\author{\Large{Yuki Ueda}}
\date{}

\maketitle

\abstract{We investigate relations between additive convolution semigroup and max-convolution semigroup through the law of large numbers for the free multiplicative convolution. Based on the relation, we give a formula related with Belinschi-Nica semigroup and max-Belinschi-Nica semigroup. Finally, we give several limit theorems for classical, free and Boolean extreme values.}



\section{Introduction}

Denote by $\mathcal{P}$ and $\mathcal{P}_+$ the set of all probability measures on $\mathbb{R}$ and $[0,\infty)$, respectively. In classical probability theory, many mathematicians studied the limit law of addition and multiplication of large numbers of independent random variables. In free probability theory, we obtain the limit law of addition of large numbers of freely independent real random variables (selfadjoint operators): for any $\mu\in\mathcal{P}$ with the first moment $\alpha$, we have $D_{1/n}(\mu^{\boxplus n})\xrightarrow{w} \delta_\alpha$ as $n\rightarrow\infty$  (see \cite[Corollary 5.2]{LP97}), where $\boxplus$ is called {\it free additive convolution} and
\begin{align*}
\mu^{\boxplus n}:=\overbrace{\mu\boxplus \cdots \boxplus \mu}^{n \text{ times}}
\end{align*}
is the {\it free additive convolution powers of $\mu$} (see \cite{V86,M92,BV93}) and $D_c$ is the {\it dilation} i.e. $D_c(\mu)(B):=\mu(c^{-1}B)$ for all $c>0$ and Borel sets $B$ in $\mathbb{R}$. Similarly, Tucci studied the limit law of multiplication of large numbers of freely independent bounded positive random variables (see \cite{T10}). After that, Haagerup and M\"{o}ller extended Tucci's limit theorem to (unbounded) positive random variables: for any $\mu\in\mathcal{P}_+$, there exists a unique $\nu\in\mathcal{P}_+$ such that $(\mu^{\boxtimes n})^{1/n}\xrightarrow{w} \nu$ as $n\rightarrow\infty$ (see \cite{HM13}), where $\boxtimes$ is called {\it free multiplicative convolution} and 
\begin{align*}
\mu^{\boxtimes n}:=\overbrace{\mu\boxtimes \cdots \boxtimes \mu}^{n \text{ times}}\end{align*}
is the {\it free multiplicative convolution powers of $\mu$} (see \cite{V87,BV93}) and $\mu^{1/n}$ is the distribution of $X^{1/n}$ if $X\sim \mu$. Denoted by $\Phi(\mu)$ the weak limit law of $(\mu^{\boxtimes n})^{1/n}$ as $n\rightarrow\infty$. 

In \cite{BV06}, Ben Arous and Voiculescu introduced the {\it free max-convolution} ${\Box \hspace{-.75em} \lor}$. This convolution is the distribution of maximum of free independent real random variables with respect to the spectral order (which was introduced in \cite{O71}). One of the most important distributions in free max-probability is the {\it free extreme value distribution}. This distribution has many similarity to the classical extreme value distribution which is the limit law of the maximum of large numbers of identically distributed random variables (see e.g. \cite{R87}). In \cite{BC10}, we constructed random matrix models which realizes free extreme value distribution. In \cite{GN17}, we obtained relations between free extreme values, order statistics (e.g. Peak-Over-Threshold method and generalized extreme values) and random matrices.

In \cite{VV18}, Vargas and Voiculescu introduced the {\it Boolean max-convolution} ${\cup \hspace{-.72em} \lor}$. This convolution is the distribution of maximum of Boolean independent random variables with respect to spectral order. The maximum works for Boolean independent nonnegative random variables. In the same way as free max-case, we obtain extreme value distribution with respect to Boolean max-convolution and it corresponds to the {\it Dagum distribution}.

In Section 3.1, we give a formula between free additive convolution $\boxplus$ and free max-convolution ${\Box \hspace{-.75em} \lor}$ through using the above operator $\Phi:\mathcal{P}_+\rightarrow\mathcal{P}_+$.

\begin{theorem}\label{thm:free_additive_max}
Consider $\mu\in \mathcal{P}_+$. Then we have
\begin{align*}
\Phi(D_{1/t}(\mu^{\boxplus t}))=\Phi(\mu)^{\Box \hspace{-.55em} \lor t}, \qquad t\ge 1.
\end{align*}
\end{theorem}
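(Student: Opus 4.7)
The plan is to establish the identity by comparing quantile functions on both sides through the Haagerup--M\"oller representation of $\Phi$. The first ingredient is the scaling identity
\[
S_{\mu^{\boxplus t}}(z) \;=\; \frac{1}{t}\, S_\mu(z/t), \qquad t \ge 1,
\]
valid on the common domain of the two $S$-transforms. To derive it, set $V_\mu(z) := z R_\mu(z)$ and $U_\mu(w) := w S_\mu(w)$. Voiculescu's relation $R_\mu(wS_\mu(w)) = 1/S_\mu(w)$, multiplied through by $wS_\mu(w)$, gives $V_\mu \circ U_\mu = \mathrm{id}$. Since $R_{\mu^{\boxplus t}} = t R_\mu$ yields $V_{\mu^{\boxplus t}} = tV_\mu$, inversion produces $U_{\mu^{\boxplus t}}(w) = U_\mu(w/t)$, from which the scaling identity is immediate. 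Combined with the dilation rule $S_{D_{c}\rho}(z) = c^{-1} S_\rho(z)$, this yields $S_{D_{1/t}(\mu^{\boxplus t})}(z) = S_\mu(z/t)$.

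Next I would invoke the Haagerup--M\"oller theorem \cite{HM13}: for $\rho \in \mathcal{P}_+$ with $\rho \ne \delta_0$, the quantile function of $\Phi(\rho)$ satisfies $F_{\Phi(\rho)}^{-1}(s) = 1/S_\rho(s-1)$ for $s \in (\rho(\{0\}), 1)$. Applied to $\rho = D_{1/t}(\mu^{\boxplus t})$ and using the previous computation, the left-hand side of the theorem has quantile function $s \mapsto 1/S_\mu((s-1)/t)$. For the right-hand side, the Ben Arous--Voiculescu formula $F_{\mu_1 \Box \hspace{-.55em} \lor \mu_2}(x) = \max(F_{\mu_1}(x) + F_{\mu_2}(x) - 1, 0)$ iterates to
\[
F_{\Phi(\mu)^{\Box \hspace{-.55em} \lor t}}(x) \;=\; \max\{\, t F_{\Phi(\mu)}(x) - (t-1),\, 0\, \}, \qquad t \ge 1.
\]
Taking the generalized inverse and applying Haagerup--M\"oller to $\mu$ itself gives
\[
F_{\Phi(\mu)^{\Box \hspace{-.55em} \lor t}}^{-1}(s) \;=\; F_{\Phi(\mu)}^{-1}\!\left(\frac{s+t-1}{t}\right) \;=\; \frac{1}{S_\mu((s-1)/t)},
\]
matching the left-hand side and forcing equality of the two measures on the interior.

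The main obstacle will be tracking atoms at the boundary of the parametrization. $\Phi(\rho)$ carries a mass $\rho(\{0\})$ at $0$ (and possibly an atom at the right endpoint of its support) corresponding to the endpoints of the range $s \in (\rho(\{0\}), 1)$, which are not captured by the interior quantile formula and must be matched directly. On the left, the atom at $0$ has mass $D_{1/t}(\mu^{\boxplus t})(\{0\}) = \mu^{\boxplus t}(\{0\}) = \max\{0,\, t\mu(\{0\}) - (t-1)\}$ by the Bercovici--Voiculescu atom formula for free additive convolution powers. On the right, $F_{\Phi(\mu)^{\Box \hspace{-.55em} \lor t}}(0^+) = \max(t\mu(\{0\}) - (t-1), 0)$ since $F_{\Phi(\mu)}(0^+) = \mu(\{0\})$, so the atoms at $0$ coincide; an analogous check at the top of the support finishes the identification of the two measures and hence the proof.
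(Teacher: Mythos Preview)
Your proposal is correct and follows essentially the same route as the paper: both arguments rest on the $S$-transform scaling identity $S_{\mu^{\boxplus t}}(z)=t^{-1}S_\mu(z/t)$ combined with the Haagerup--M\"oller description of $\Phi$, and both match the atom at $0$ via the Belinschi--Bercovici formula (which you misattribute to Bercovici--Voiculescu). The only real difference is packaging: you compare quantile functions $s\mapsto 1/S_\rho(s-1)$ on both sides, whereas the paper works with the distribution functions directly and is more explicit about domain bookkeeping, splitting into the cases $\mu(\{0\})\le 1-t^{-1}$ and $\mu(\{0\})>1-t^{-1}$ to verify that the intervals $A_t$ and $M_t$ on which the two descriptions are valid actually coincide. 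Your quantile approach compresses this into a single computation, which is cleaner, but you should still spell out that the domains of validity of the two Haagerup--M\"oller parametrizations agree (they do, since $\mu^{\boxplus t}(\{0\})=\max\{t\mu(\{0\})-(t-1),0\}=\Phi(\mu)^{\Box\hspace{-.55em}\lor t}(\{0\})$). Your remark about a possible atom at the top of the support is unnecessary: the quantile formula shows the CDF of $\Phi(\rho)$ is continuous there.
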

In Section 3.2, we claim that the operator $\Phi$ connects {\it Boolean additive convolution} $\uplus$ (see \cite{SW97}) to Boolean max-convolution ${\cup \hspace{-.67em}\lor}$ as follows.
\begin{theorem}\label{thm:Boolean_additive_max}
Consider $\mu\in \mathcal{P}_+$. Then we have
\begin{align*}
\Phi(D_{1/t}(\mu^{\uplus t}))=\Phi(\mu)^{\cup \hspace{-.52em} \lor t}, \qquad t>0.
\end{align*}
\end{theorem}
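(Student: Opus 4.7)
\emph{Proof plan.} The plan is to compare both sides at the level of cumulative distribution functions, using the Haagerup--M\"oller description of $\Phi$: for $\mu \in \mathcal{P}_+$ with $\mu(\{0\})<1$, the CDF of $\Phi(\mu)$ is characterized by $F_{\Phi(\mu)}(1/S_\mu(x-1)) = x$ for $x \in (\mu(\{0\}),1)$, where $S_\mu$ denotes the free multiplicative $S$-transform. The first step is therefore to compute $S_{D_{1/t}(\mu^{\uplus t})}$. Starting from the Boolean-additive linearity $z - 1/G_{\mu^{\uplus t}}(z) = t(z - 1/G_\mu(z))$ and rewriting in terms of the $\psi$-transform $\psi_\mu(z) = z^{-1}G_\mu(z^{-1})-1$, a direct manipulation yields $\psi_{\mu^{\uplus t}}(z) = t\psi_\mu(z)/(1+(1-t)\psi_\mu(z))$; inverting this identity, applying the relation $S_\nu(w) = \frac{w+1}{w}\chi_\nu(w)$, and combining with the dilation rule $S_{D_c(\nu)}=c^{-1}S_\nu$ produces
\[
S_{D_{1/t}(\mu^{\uplus t})}(w) = S_\mu\!\left(\tfrac{w}{t-(1-t)w}\right).
\]

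Next I would substitute this identity into the Haagerup--M\"oller formula. With the change of variables $y = tx/(1-(1-t)x)$, so that $y-1 = (x-1)/(1-(1-t)x)$ and $x = y/(t+(1-t)y)$, a short computation produces the pointwise identity
\[
F_{\Phi(D_{1/t}(\mu^{\uplus t}))}(v) = \frac{F_{\Phi(\mu)}(v)}{t+(1-t)F_{\Phi(\mu)}(v)}.
\]
To conclude, I would match the right-hand side with $F_{\Phi(\mu)^{\cup \hspace{-.52em} \lor t}}(v)$: according to \cite{VV18}, Boolean max-convolution linearizes the transform $v \mapsto 1/F_\nu(v)-1$, so the $t$-th ${\cup \hspace{-.52em} \lor}$-power of $\nu$ has CDF $F_\nu/(t+(1-t)F_\nu)$; specialising to $\nu = \Phi(\mu)$ closes the argument.

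The main obstacle is the bookkeeping at the endpoints: the Haagerup--M\"oller parametrization is only sharp on $(\mu(\{0\}),1)$, and since the substitution $y = tx/(1-(1-t)x)$ shifts that range and the atom at $0$ may differ between $\mu$ and $\mu^{\uplus t}$, one must verify that the resulting identity extends continuously to a genuine equality of distribution functions on $[0,\infty)$. Apart from this caveat, the argument is a direct Boolean analogue of the free-additive proof of Theorem \ref{thm:free_additive_max}.
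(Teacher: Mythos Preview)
Your proposal is correct and follows essentially the same route as the paper: both compute $S_{D_{1/t}(\mu^{\uplus t})}$ (the paper quotes it from \cite{BN08} as Lemma~\ref{lem:Sfb}, you rederive it via the $\psi$-transform), feed it into the Haagerup--M\"oller description of $\Phi$, and match the resulting CDF with the Boolean max-convolution formula. The only difference is that the paper carries out your ``endpoint bookkeeping'' explicitly---it checks that the atoms agree via $\mu^{\uplus t}(\{0\})=\mu^{\cup\hspace{-.52em}\lor t}(\{0\})$ and shows the relevant support intervals $A_t$ and $M_t$ both equal $(a_\mu,b_\mu)$---whereas you leave this as a caveat.
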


Next, we define two operators $B_t$ and $B_t^\lor$ as follows:
\begin{align*}
B_t(\mu)&:=(\mu^{\boxplus (1+t)})^{\uplus \frac{1}{1+t}}, \qquad t\ge 0, \hspace{2mm} \mu\in\mathcal{P};\\
B_t^\lor(\mu)&:=(\mu^{\Box \hspace{-.55em} \lor (1+t)})^{\cup \hspace{-.52em} \lor \frac{1}{1+t}}, \qquad t\ge 0, \hspace{2mm} \mu\in\mathcal{P}_+.
\end{align*}
It is known that $B_t\circ B_s=B_{t+s}$ and $B_t^\lor \circ B_s^\lor=B_{t+s}^\lor$ for all $t,s\ge 0$, so that the families $\{B_t\}_{t\ge 0}$ and $\{B_t^\lor\}_{t\ge 0}$ are semigroups with respect to the composition of operators. The semigroups $\{B_t\}_{t\ge0}$ and $\{B_t^\lor\}_{t\ge0}$ are called {\it Belinschi-Nica semigroup} (see \cite{BN08}) and {\it max-Belinschi-Nica semigroup} (see \cite{U19}), respectively. Considering these semigroups is important to understand relations between free and Boolean type limit theorems or free-max and Boolean-max type limit theorems. In Section 3.3, by using the operator $\Phi$, we claim that Belinschi-Nica semigroup $\{B_t\}_{t\ge0}$ and max-Belinschi-Nica semigroup $\{B_t^\lor\}_{t\ge0}$ are closely intertwined with each other.

\begin{theorem}\label{thm:intertwiner}
Consider $\mu\in \mathcal{P}_+$. Then we have
\begin{align*}
\Phi\circ B_t(\mu)=B_t^\lor\circ \Phi(\mu), \qquad t\ge 0.
\end{align*}
\end{theorem}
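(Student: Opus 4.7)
The plan is to derive Theorem~\ref{thm:intertwiner} as a direct corollary of Theorems~\ref{thm:free_additive_max} and~\ref{thm:Boolean_additive_max}, after recasting them without the explicit dilation factor on the left-hand side. To do this I first need two auxiliary commutation properties that should follow from the definitions.

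\textbf{Step 1 (Dilation compatibilities).} I will first verify that $\Phi \circ D_c = D_c \circ \Phi$ for any $c>0$, and that both max-convolution powers commute with dilations, i.e. $D_c(\mu)^{\Box\hspace{-.55em}\lor s} = D_c(\mu^{\Box\hspace{-.55em}\lor s})$ and $D_c(\mu)^{\cup\hspace{-.52em}\lor s} = D_c(\mu^{\cup\hspace{-.52em}\lor s})$ on $\mathcal{P}_+$. For $\Phi$, this follows from the $S$-transform identity $S_{D_c(\mu)} = c^{-1}S_\mu$, which yields $D_c(\mu)^{\boxtimes n} = D_{c^n}(\mu^{\boxtimes n})$ and hence $(D_c(\mu)^{\boxtimes n})^{1/n} = D_c((\mu^{\boxtimes n})^{1/n})$; passing to the limit gives the claim. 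For the free and Boolean max-powers, the analogous identities follow immediately from the description of these convolutions via CDFs under the spectral order.

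\textbf{Step 2 (Reformulation of Theorems 1.1 and 1.2).} Using $\Phi \circ D_{1/t} = D_{1/t} \circ \Phi$ and applying $D_t$ to both sides, Theorem~\ref{thm:free_additive_max} becomes
\begin{equation*}
\Phi(\mu^{\boxplus t}) = D_t\!\left(\Phi(\mu)^{\Box\hspace{-.55em}\lor t}\right), \qquad t \geq 1,
\end{equation*}
and similarly Theorem~\ref{thm:Boolean_additive_max} becomes
\begin{equation*}
\Phi(\mu^{\uplus s}) = D_s\!\left(\Phi(\mu)^{\cup\hspace{-.52em}\lor s}\right), \qquad s > 0.
\end{equation*}

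\textbf{Step 3 (Chaining).} Fix $\mu \in \mathcal{P}_+$ and $t \geq 0$. Set $s = 1/(1+t)$ and $\nu = \mu^{\boxplus(1+t)}$, so that $B_t(\mu) = \nu^{\uplus s}$. Applying the Boolean formula above with $\nu$,
\begin{equation*}
\Phi(B_t(\mu)) = D_s\!\left(\Phi(\nu)^{\cup\hspace{-.52em}\lor s}\right) = D_s\!\left(\Phi(\mu^{\boxplus(1+t)})^{\cup\hspace{-.52em}\lor s}\right).
\end{equation*}
Now the free formula with $t$ replaced by $1+t \geq 1$ gives $\Phi(\mu^{\boxplus(1+t)}) = D_{1+t}(\Phi(\mu)^{\Box\hspace{-.55em}\lor(1+t)})$. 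Substituting and using Step~1 to pull the inner dilation $D_{1+t}$ out through the max-power $\cup\hspace{-.52em}\lor s$, I obtain
\begin{equation*}
\Phi(B_t(\mu)) = D_s \circ D_{1+t}\!\left(\left(\Phi(\mu)^{\Box\hspace{-.55em}\lor(1+t)}\right)^{\cup\hspace{-.52em}\lor s}\right).
\end{equation*}
Since $s(1+t) = 1$, the two dilations cancel and the right-hand side equals $B_t^\lor(\Phi(\mu))$, finishing the proof.

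\textbf{Main obstacle.} The substantive theorems (1.1 and 1.2) do the real work; the only genuine content to verify here is the dilation compatibilities in Step~1, and the miraculous cancellation $s(1+t) = 1$ in Step~3 which makes the composition of dilations disappear. The commutation of $\Phi$ with $D_c$ is the most delicate of these, but it reduces to a short $S$-transform computation; the max-convolution side is essentially a formal consequence of the CDF description of spectral-order maxima.
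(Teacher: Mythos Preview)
Your proposal is correct and follows essentially the same route as the paper: the paper's proof also reduces Theorem~\ref{thm:intertwiner} to Theorems~\ref{thm:free_additive_max} and~\ref{thm:Boolean_additive_max} together with the dilation commutation $\Phi\circ D_c = D_c\circ\Phi$ (which the paper has already recorded as Lemma~\ref{lem:dP}), and then performs exactly the same chain of substitutions with the cancellation $D_{\frac{1}{1+t}}\circ D_{1+t}=\mathrm{id}$. The only cosmetic difference is that the paper singles out the Dirac-measure case first, which is harmless since Theorems~\ref{thm:free_additive_max} and~\ref{thm:Boolean_additive_max} already cover it.
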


In Section 3.4, we construct an operator which connects classical additive convolution $\ast$ to classical max-convolution $\lor$. A probability measure $\mu$ is said to be (classical) {\it infinitely divisible} if for each $n\in\mathbb{N}$ there is $\mu_n\in\mathcal{P}$ such that $\mu=\mu_n^{\ast n}$. Let $\text{ID}_+$ be the set of all (classical) {\it infinitely divisible distributions} on $[0,\infty)$ (for details of infinitely divisible distributions, see e.g. \cite{Sato}). An operator $\Psi:\text{ID}_+\rightarrow\mathcal{P}_+$ is defined by
\begin{align*}
\Psi:=\mathcal{X}^{\lor} \circ \Phi \circ \mathcal{X}^{-1},
\end{align*}
where  the operator $\mathcal{X}$ is defined by 
\begin{align*}
\mathcal{X}:=\Lambda^{-1}\circ B_1,
\end{align*}
and $\Lambda$ is the {\it Bercovici-Pata bijection} (see \cite{BP99}). Note that the operator $\mathcal{X}$ is a bijection from $\mathcal{P}_+$ to $\text{ID}_+$ and it is called the {\it Boolean-classical Bercovici-Pata bijection} (see \cite{BP99,BN08}). Moreover, for any $\mu\in\mathcal{P}_+$, the measure $\mathcal{X}^{\lor}(\mu)$ is characterized by
\begin{align*}
\mathcal{X}^{\lor}(\mu)([0,\cdot]):=\exp\left[ 1-\frac{1}{\mu([0,\cdot])}\right].
\end{align*}
The operator $\mathcal{X}^\lor$ is called the {\it Boolean-classical max-Bercovici-Pata bijection} which was firstly introduced by \cite{VV18}. Then we obtain the following formula.

\begin{theorem}\label{classical_additive_max2}
Consider $\mu\in \text{ID}_+$. Then we have
\begin{align*}
\Psi(D_{1/t}(\mu^{\ast t}))=\Psi(\mu)^{\lor t}, \qquad t>0.
\end{align*}
\end{theorem}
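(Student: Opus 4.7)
The plan is to transport the assertion from the classical-ID side to the Boolean side via $\mathcal{X}^{-1}$, apply the already-proved Theorem~\ref{thm:Boolean_additive_max} there, and then transport the result back to the classical-max side via $\mathcal{X}^{\lor}$. Thus $\Psi$ is designed precisely to make Theorem~\ref{thm:Boolean_additive_max} visible under this conjugation.

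First I would record two intertwining properties of the Boolean--classical Bercovici-Pata bijection $\mathcal{X}=\Lambda^{-1}\circ B_1$. Writing $\mathcal{X}^{-1}=B_1^{-1}\circ \Lambda$, one has $\mathcal{X}^{-1}(\mu^{\ast t})=(\mathcal{X}^{-1}(\mu))^{\uplus t}$ for every $\mu\in \text{ID}_+$ and $t>0$ (this is the defining property of the Boolean--classical Bercovici-Pata bijection, combining the Bercovici--Pata intertwining $\Lambda(\mu^{\ast t})=\Lambda(\mu)^{\boxplus t}$ with the Belinschi--Nica identity $B_1(\rho^{\uplus t})=\rho^{\boxplus t}$ on the relevant domain, see \cite{BP99,BN08}). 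One also needs that $\mathcal{X}^{-1}$ commutes with the dilations $D_c$ for $c>0$, which follows directly from the dilation equivariance of $\Lambda$ and of $B_1$ at the level of their cumulant transforms.

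Second, I would verify the analogous intertwining on the max side:
\begin{equation*}
\mathcal{X}^{\lor}(\eta^{\cup \hspace{-.52em} \lor t})=\mathcal{X}^{\lor}(\eta)^{\lor t}, \qquad \eta\in\mathcal{P}_+,\ t>0.
\end{equation*}
This is an explicit check from the definition: writing $F_\eta(x)=\eta([0,x])$, Boolean max-convolution is characterized by additivity of $h_\eta:=1-1/F_\eta$, so $F_{\eta^{\cup \hspace{-.52em} \lor t}}=1/(1-t\,h_\eta)$, while classical max satisfies $F_{\eta^{\lor t}}=F_\eta^{\,t}$. Substituting into $\mathcal{X}^{\lor}(\eta)([0,x])=\exp[1-1/F_\eta(x)]=\exp(h_\eta(x))$ immediately gives the claim.

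With these two equivariance properties in hand, the proof is just a chain of substitutions. Setting $\nu:=\mathcal{X}^{-1}(\mu)\in\mathcal{P}_+$, one computes
\begin{align*}
\Psi(D_{1/t}(\mu^{\ast t}))
&=\mathcal{X}^{\lor}\bigl(\Phi\bigl(\mathcal{X}^{-1}(D_{1/t}(\mu^{\ast t}))\bigr)\bigr) \\
&=\mathcal{X}^{\lor}\bigl(\Phi\bigl(D_{1/t}(\nu^{\uplus t})\bigr)\bigr) \\
&=\mathcal{X}^{\lor}\bigl(\Phi(\nu)^{\cup \hspace{-.52em} \lor t}\bigr) \\
&=\mathcal{X}^{\lor}(\Phi(\nu))^{\lor t}=\Psi(\mu)^{\lor t},
\end{align*}
where the second equality uses Step~1, the third is Theorem~\ref{thm:Boolean_additive_max}, and the fourth uses Step~2. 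The genuine content of the argument is Theorem~\ref{thm:Boolean_additive_max}; the only delicate point in the present theorem is to justify the intertwining of $\mathcal{X}^{-1}$ with $\ast\leftrightarrow\uplus$ and of $\mathcal{X}^{\lor}$ with $\lor\leftrightarrow {\cup \hspace{-.52em} \lor}$, both of which I expect to settle by direct transform computations. No extra work on $\Phi$ itself is required.
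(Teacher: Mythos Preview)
Your proposal is correct and matches the paper's own argument essentially step for step: the paper also records the two intertwining properties $\mathcal{X}^{-1}\circ D_c=D_c\circ\mathcal{X}^{-1}$ and $\mathcal{X}^{-1}(\mu^{\ast t})=\mathcal{X}^{-1}(\mu)^{\uplus t}$, applies Theorem~\ref{thm:Boolean_additive_max}, and then verifies by the same direct computation that applying $\mathcal{X}^{\lor}$ to $\Phi(\mathcal{X}^{-1}(\mu))^{\cup \hspace{-.52em} \lor t}$ yields $\exp\bigl[t(1-1/\Phi(\mathcal{X}^{-1}(\mu))([0,\cdot]))\bigr]=\Psi(\mu)^{\lor t}$. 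The only cosmetic difference is that you isolate the identity $\mathcal{X}^{\lor}(\eta^{\cup \hspace{-.52em} \lor t})=\mathcal{X}^{\lor}(\eta)^{\lor t}$ as a general lemma, whereas the paper performs this computation inline on the specific measure $\eta=\Phi(\mathcal{X}^{-1}(\mu))$.
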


In Section 4, we compute some probability measures in the classes $\Phi(\mathcal{P}_+)$ and $\Psi(\mathcal{P}_+)$. As one of the most important computations, we mention that the operator $\Phi$ connects the free/Boolean stable laws to the free/Boolean extreme values, respectively. As one more, we claim that the operator $\Psi$ maps the classical stable laws to the classical extreme values. 

In Section 5, we give a few of limit theorems for the free and Boolean extreme values by using limit theorems for free multiplicative convolution. Through discussions at Section 5, we mention that Marchenko-Pastur law is closely related with the free and Boolean extreme values.


\section{Preliminaries}


\subsection{Atoms of free and Boolean additive convolutions}

Consider freely independent noncommutative real random variables $X\sim \mu$ and $Y\sim \nu$. Then we define $\mu\boxplus\nu$ as the distribution of $X+Y$ and the operation $\boxplus$ is called the {\it free additive convolution} which was introduced by \cite{V86} (see also \cite{M92, BV93}). We can define the partial semigroup $\{\mu^{\boxplus t}\}_{t\ge 1}$ with $\mu^{\boxplus 1}=\mu$ with respect to free convolution, for all $\mu\in\mathcal{P}$. In \cite{BB04}, we get a location of an atom of $\mu^{\boxplus t}$.

\begin{lemma}\label{lem:atom} \cite{BB04}
Consider $\mu\in\mathcal{P}$, $t>1$ and $\alpha\in\mathbb{R}$. Then $\mu^{\boxplus t}$ has an atom $\alpha$ if and only if $\mu(\{\alpha/t\})> 1-t^{-1}$. In the case, we have
\begin{align*}
\mu^{\boxplus t}(\{\alpha\})=t\mu\left(\left\{\frac{\alpha}{t}\right\}\right)-(t-1).
\end{align*}
\end{lemma}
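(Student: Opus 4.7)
The plan is to detect atoms of $\mu^{\boxplus t}$ via the boundary behaviour of the Cauchy transform and to transport that information to $\mu$ through the subordination function attached to the partial semigroup $\{\mu^{\boxplus t}\}_{t\ge 1}$. The general tool I will use is that a probability measure $\sigma$ on $\R$ has an atom of mass $c$ at $\alpha$ if and only if $\lim_{y\downarrow 0} iy\,G_\sigma(\alpha+iy)=c$, equivalently $F_\sigma(\alpha+iy)=iy/c+o(y)$ as $y\downarrow 0$. On the other hand, the Voiculescu-transform identity $R_{\mu^{\boxplus t}}=tR_\mu$ rearranges to
\[
G_{\mu^{\boxplus t}}(w)=G_\mu(\omega_t(w)),\qquad \omega_t(w):=\frac{w}{t}+\left(1-\frac{1}{t}\right)F_{\mu^{\boxplus t}}(w),
\]
and this subordination relation will be the workhorse.

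For the forward direction I would assume $c:=\mu^{\boxplus t}(\{\alpha\})>0$, so that $F_{\mu^{\boxplus t}}(\alpha+iy)=iy/c+o(y)$. Substituting into the formula for $\omega_t$ gives
\[
\omega_t(\alpha+iy)=\frac{\alpha}{t}+iy\cdot\frac{c+t-1}{tc}+o(y),
\]
so $\omega_t(\alpha+iy)\to\alpha/t$ non-tangentially, and the subordination identity then forces $\alpha/t$ to be an atom of $\mu$. Comparing the leading coefficients of $1/(iy)$ in the identity $G_{\mu^{\boxplus t}}(\alpha+iy)=G_\mu(\omega_t(\alpha+iy))$ yields $c=\mu(\{\alpha/t\})\cdot tc/(c+t-1)$, hence $c=t\mu(\{\alpha/t\})-(t-1)$. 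Positivity of $c$ then forces $\mu(\{\alpha/t\})>1-t^{-1}$.

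For the converse I would set $d:=\mu(\{\alpha/t\})>1-t^{-1}$ and $c:=td-(t-1)>0$, then reverse-engineer the same asymptotics. Using $F_{\mu^{\boxplus t}}=F_\mu\circ\omega_t$, the defining identity rewrites as the fixed-point equation $\omega_t(w)=w/t+(1-t^{-1})F_\mu(\omega_t(w))$. The Julia--Carath\'eodory theorem applied to $F_\mu$ at $\alpha/t$ supplies a finite angular derivative $1/d$ there (this is exactly what the atom of mass $d$ provides), and feeding this into the fixed-point equation forces $\omega_t(\alpha+iy)\to\alpha/t$ with angular derivative $(c+t-1)/(tc)$. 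Plugging back into $G_{\mu^{\boxplus t}}=G_\mu\circ\omega_t$ and reading off the $1/(iy)$ coefficient delivers the claimed atom of mass $c$ at $\alpha$.

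The main obstacle will be the converse: rigorously producing the non-tangential boundary behaviour of $\omega_t$ at $\alpha$ out of the large atom of $\mu$ at $\alpha/t$. The direct direction is essentially algebraic once the boundary expansion of $F_{\mu^{\boxplus t}}$ is in hand, whereas the converse requires Julia--Carath\'eodory-type control for the self-map $\omega_t$ of the upper half-plane. It is exactly the threshold $d>1-t^{-1}$ that makes $c=td-(t-1)$ strictly positive and hence the angular derivative $(c+t-1)/(tc)$ finite, so that the prescribed atom actually materialises; when $d\le 1-t^{-1}$ the same analysis shows $F_{\mu^{\boxplus t}}(\alpha+iy)$ does not vanish fast enough and no atom appears, giving the ``only if'' part of the dichotomy a second time and confirming sharpness of the threshold.
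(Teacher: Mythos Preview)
The paper does not prove this lemma; it is quoted from \cite{BB04} without argument. So there is no ``paper's own proof'' to compare against, and what you have written is effectively a sketch of the Belinschi--Bercovici proof.

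Your outline is the correct one and matches the strategy of \cite{BB04}: detect atoms through the Julia--Carath\'eodory boundary behaviour of $F_\sigma$ and transport this via the subordination identity $F_{\mu^{\boxplus t}}=F_\mu\circ\omega_t$ with $\omega_t(w)=w/t+(1-t^{-1})F_{\mu^{\boxplus t}}(w)$. The forward direction is essentially complete as written: once $F_{\mu^{\boxplus t}}(\alpha+iy)=iy/c+o(y)$, the explicit formula for $\omega_t$ gives the non-tangential approach to $\alpha/t$ with the stated slope, and comparing coefficients yields both the mass formula and the inequality.

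The converse is where you correctly locate the difficulty, and your sketch is too optimistic at one point. You write that the fixed-point equation ``forces $\omega_t(\alpha+iy)\to\alpha/t$,'' but this is precisely the step that requires justification: a priori nothing guarantees that $\omega_t$ has a non-tangential limit at $\alpha$, let alone that this limit equals $\alpha/t$. In \cite{BB04} this is handled by exploiting the structural fact that $\omega_t$ is itself (up to an affine shift) the reciprocal Cauchy transform of a probability measure, so that Julia--Carath\'eodory applies to $\omega_t$ directly; one then shows that the angular derivative of $\omega_t$ at $\alpha$ is finite, which simultaneously forces the boundary value $\omega_t(\alpha)=\alpha/t$ via the fixed-point relation. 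Your proposal would become a genuine proof once you insert this intermediate step rather than asserting the boundary behaviour of $\omega_t$ as a consequence of the equation alone.
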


For $\mu\in\mathcal{P}$, we define the following functions:
\begin{align*}
G_\mu(z):=\int_\mathbb{R} \frac{1}{z-x}d\mu(x), \qquad F_\mu(z):=\frac{1}{G_\mu(z)}.
\end{align*}
The function $G_\mu$ is called the {\it Cauchy transform of $\mu$} and it is analytic on the upper complex plane $\mathbb{C}^+$ taking values in the below complex plane $\mathbb{C}^-$.
We know a useful criterion for locating an atom $\alpha$ of $\mu$.
\begin{lemma}\label{lem:F}
Consider $\mu\in\mathcal{P}$ and $\alpha\in\mathbb{R}$. Then $\mu$ has an atom $\alpha$ if and only if $F_\mu(\alpha)=0$ and the {\it Juria-Carath\'{e}odory derivative} $F_\mu'(\alpha)$ is finite where $F_\mu'(\alpha)$ is the limit of
\begin{align*}
\frac{F_\mu(z)-F_\mu(\alpha)}{z-\alpha},
\end{align*}
as $z\rightarrow\alpha$ nontangentially and $z\in\mathbb{C}^+$. In this case, we have $\mu(\{\alpha\})=F_\mu'(\alpha)^{-1}$.
\end{lemma}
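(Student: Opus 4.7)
The plan is to reduce everything to the elementary identity
\[
\mu(\{\alpha\}) = \lim_{\substack{z\to\alpha \\ \text{n.t.}}} (z-\alpha)\,G_\mu(z),
\]
where ``n.t.'' means the limit is taken along any Stolz angle $\Gamma_C(\alpha)=\{z\in\mathbb{C}^+ : |z-\alpha|\le C\,\im z\}$. To verify this, I would write
$(z-\alpha)G_\mu(z)=\int_\mathbb{R}\frac{z-\alpha}{z-x}\,d\mu(x)$ and observe that on $\Gamma_C(\alpha)$ one has $|z-x|\ge \im z \ge |z-\alpha|/C$, so the integrand is bounded in modulus by $C$ uniformly, tends to $0$ for $x\neq\alpha$, and equals $1$ for $x=\alpha$; dominated convergence then yields the identity. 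Rewriting via $F_\mu=1/G_\mu$ gives the crucial reformulation
\[
\mu(\{\alpha\}) = \lim_{\substack{z\to\alpha \\ \text{n.t.}}} \frac{z-\alpha}{F_\mu(z)}.
\]

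For the forward implication, if $\mu(\{\alpha\})>0$ then the right-hand limit is a finite nonzero real, hence its reciprocal exists: $F_\mu(z)/(z-\alpha)\to 1/\mu(\{\alpha\})$ nontangentially. Multiplying by $z-\alpha\to 0$ forces $F_\mu(z)\to 0$, so $F_\mu$ extends continuously (nontangentially) to $\alpha$ with value $0$, and the Julia--Carathéodory derivative is $F_\mu'(\alpha)=1/\mu(\{\alpha\})<\infty$.

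For the converse, I would appeal to the Julia--Carathéodory theorem for the Pick function $F_\mu:\mathbb{C}^+\to\mathbb{C}^+$ (which follows from the Nevanlinna representation of $F_\mu$): when $F_\mu(\alpha)=0$ and $F_\mu'(\alpha)$ is finite, the angular derivative is automatically a strictly positive real number. Hence $F_\mu(z)/(z-\alpha)\to F_\mu'(\alpha)\neq 0$ nontangentially, and inverting yields $(z-\alpha)/F_\mu(z)\to 1/F_\mu'(\alpha)$. Comparing with the boxed identity gives $\mu(\{\alpha\})=F_\mu'(\alpha)^{-1}>0$, so $\alpha$ is indeed an atom with the claimed mass.

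The main obstacle is purely technical: passing from radial to nontangential convergence and justifying that the Julia--Carathéodory derivative of a reciprocal Cauchy transform, when finite, is strictly positive (so that its reciprocal is meaningful). Both points are standard consequences of the Pick/Nevanlinna structure of $F_\mu$, and once they are in place the rest of the argument is the straightforward manipulation of the single identity above.
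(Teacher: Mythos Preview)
The paper states this lemma without proof, treating it as a standard fact from the literature on Cauchy transforms and Pick functions. Your argument is correct and is in fact the usual route: reduce to the Stieltjes inversion identity $\mu(\{\alpha\})=\lim_{z\to\alpha,\ \text{n.t.}}(z-\alpha)G_\mu(z)$ via dominated convergence on a Stolz angle, then invoke the Julia--Carath\'eodory theorem for the Pick function $F_\mu:\mathbb{C}^+\to\mathbb{C}^+$ to guarantee that a finite angular derivative at a real boundary point with real boundary value is strictly positive. One small comment: you might make explicit that $F_\mu$ is indeed a genuine self-map of $\mathbb{C}^+$ (since $G_\mu(\mathbb{C}^+)\subset\mathbb{C}^-$), as this is what puts you squarely in the hypotheses of Julia--Carath\'eodory and justifies the positivity of $F_\mu'(\alpha)$.
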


Consider Boolean independent real random variables $X\sim \mu$ and $Y\sim \nu$. Then we define $\mu\uplus \nu$ as the distribution of $X+Y$ and the operation $\uplus$ is called the {\it Boolean additive convolution} which was introduced by \cite{SW97}. The Boolean additive convolution is characterized by the {\it self-energy function} which is defined by
\begin{align*}
E_\mu(z):=z-F_\mu(z), \qquad z\in\mathbb{C}^+
\end{align*}
for all $\mu\in\mathcal{P}$, that is, $E_{\mu\uplus \nu}=E_\mu+E_\nu$ for all $\mu,\nu\in\mathcal{P}$. In \cite{SW97}, for all $\mu\in\mathcal{P}$ and $t>0$, there exists a unique $\mu_t\in\mathcal{P}$ such that $E_{\mu_t}=tE_\mu$. A family $\{\mu^{\uplus t}\}_{t\ge 0}$ is a semigroup with respect to the Boolean additive convolution such that $\mu_0=\delta_0$. Write $\mu^{\uplus t}:=\mu_t$ for all $\mu\in\mathcal{P}$ and $t\ge 0$. Therefore

\begin{align}\label{F}
F_{\mu^{\uplus t}}(z)=(1-t)z+tF_\mu(z), \qquad z\in\mathbb{C}^+.
\end{align}


According to the above discussion, we obtain the following implication.
\begin{corollary}\label{cor:Boolean}
Consider $\mu\in\mathcal{P}$ not being $\delta_0$. Then $\mu(\{0\})=0$ if and only if $\mu^{\uplus t}(\{0\})=0$ for some (any) $t>0$.
\end{corollary}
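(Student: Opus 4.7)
The plan is to invoke Lemma \ref{lem:F} twice — once for $\mu$ and once for $\mu^{\uplus t}$ — and transport the two atom conditions (vanishing of $F$ and finiteness of the Julia-Carath\'eodory derivative) through the linear relation (\ref{F}) between $F_{\mu}$ and $F_{\mu^{\uplus t}}$. Since (\ref{F}) is an affine identity in $F_\mu(z)$, the transfer should be essentially mechanical.

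First I would record that, by Lemma \ref{lem:F}, $\mu(\{0\})>0$ is equivalent to the conjunction of (a) $F_\mu(0)=0$ (understood as the nontangential limit from $\mathbb{C}^+$), and (b) the Julia-Carath\'eodory derivative $F_\mu'(0)$ is finite. Taking the nontangential limit $z\to 0$ in (\ref{F}) gives $F_{\mu^{\uplus t}}(0)=(1-t)\cdot 0+tF_\mu(0)=tF_\mu(0)$, so for any $t>0$ condition (a) holds for $\mu$ iff it holds for $\mu^{\uplus t}$. For the derivative, dividing $F_{\mu^{\uplus t}}(z)-F_{\mu^{\uplus t}}(0)$ by $z$ in (\ref{F}) yields
\begin{equation*}
\frac{F_{\mu^{\uplus t}}(z)-F_{\mu^{\uplus t}}(0)}{z}=(1-t)+t\,\frac{F_\mu(z)-F_\mu(0)}{z},
\end{equation*}
and passing to the nontangential limit shows $F'_{\mu^{\uplus t}}(0)=(1-t)+tF'_\mu(0)$. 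Because $t>0$, the left side is finite iff the right side is, so (b) also transfers.

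To make sure the computation is consistent with the mass formula $\mu(\{0\})=F_\mu'(0)^{-1}$, I would then set $p:=\mu(\{0\})\in(0,1)$ (where $p<1$ uses $\mu\neq\delta_0$) and compute $F'_{\mu^{\uplus t}}(0)=1+t(1-p)/p$, which is strictly positive for every $t>0$, so the reciprocal $p/(p+t(1-p))\in(0,1]$ is indeed a legitimate atom mass of $\mu^{\uplus t}$ at $0$. This also rules out any degenerate situation where the limit could be zero or negative.

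For the converse, the identity (\ref{F}) can be solved as $F_\mu(z)=(1-t^{-1})z+t^{-1}F_{\mu^{\uplus t}}(z)$, which is of exactly the same affine shape; the argument in the previous paragraphs then runs in reverse and transports (a) and (b) from $\mu^{\uplus t}$ back to $\mu$. Thus $\mu(\{0\})=0$ iff $\mu^{\uplus t}(\{0\})=0$ for some $t>0$ iff $\mu^{\uplus t}(\{0\})=0$ for every $t>0$. The only real subtlety is the correct handling of the Julia-Carath\'eodory derivative of the affine term $(1-t)z$, whose nontangential derivative at $0$ is simply $1-t$ and adds unambiguously to $tF_\mu'(0)$; beyond that point the proof is a direct transcription of Lemma \ref{lem:F} through (\ref{F}).
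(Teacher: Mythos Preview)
Your proof is correct and follows essentially the same approach as the paper: both arguments apply Lemma~\ref{lem:F} together with the affine identity~\eqref{F} to transport the atom criteria between $\mu$ and $\mu^{\uplus t}$. The paper phrases the implication by contradiction (assuming $\mu(\{0\})=0$ and $\mu^{\uplus t}(\{0\})\neq 0$ and deriving $F_\mu(0)=0$, $F_\mu'(0)<\infty$), whereas you argue the biconditional directly and add the consistency check with the mass formula, but the underlying computation is the same.
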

\begin{proof}
Consider firstly $\mu(\{0\})=0$. Assume that $\mu^{\uplus t}(\{0\})\neq 0$ for any (some) $t>0$. Then $F_{\mu^{\uplus t}}(0)=0$ and $F_{\mu^{\uplus t}}'(0)<\infty$ by Lemma \ref{lem:F}. By the equation \eqref{F}, we have $F_\mu(0)=0$ and 
\begin{align*}
F_\mu'(0)=-\frac{1-t}{t}+\frac{1}{t}F_{\mu^{\uplus t}}'(0)<\infty.
\end{align*}
By Lemma \ref{lem:F} again, we have $\mu(\{0\})\neq 0$. This is a contradiction for $\mu(\{0\})=0$. The converse implication is proved by the same way.
\end{proof}

Consequencely, we have that  $\mu$ has an atom $0$ if and only if $\mu^{\uplus t}$ also has an atom $0$. In this case, we have
\begin{align}\label{eq:Boolean_add_atom}
\mu^{\uplus t}(\{0\})=\frac{\mu(\{0\})}{t-(t-1)\mu(\{0\})}, \qquad t>0.
\end{align}


\subsection{Limit theorem for free multiplicative convolution}

For probability measures $\mu\in\mathcal{P}_+$ and $\nu\in\mathcal{P}$, we write $\mu\boxtimes \nu\in\mathcal{P}$ as the distribution of $\sqrt{X}Y\sqrt{X}$, where $X\ge0$ and $Y$ are freely independent random variables distributed as $\mu$ and $\nu$, respectively. The operation $\boxtimes$ is called {\it free multiplicative convolution}. This was firstly introduced in 
\cite{V87} as the distribution of multiplication of bounded random variables. Finally, it was extended to unbounded random variables (see \cite{BV93}).

Consider $\mu\in\mathcal{P}_+$ with $\mu\neq \delta_0$. We define
\begin{align*}
\Psi_\mu(x):=\int_0^\infty \frac{tx}{1-tx}d\mu(t).
\end{align*}
Then its inverse function (namely $\Psi^{-1}_\mu$) exists in a neighborhood of $(\mu(\{0\})-1,0)$. We define the {\it S-transform of $\mu$} by setting
\begin{align*}
S_\mu(z)=\frac{z+1}{z}\Psi^{-1}_\mu(z), \qquad z\in (\mu(\{0\})-1,0).
\end{align*}
 In \cite[Corollary 6.6]{BV93}, for probability measures $\mu,\nu\neq \delta_0$ on $[0,\infty)$, we have $S_{\mu\boxtimes \nu}=S_\mu S_\nu$ on the common interval defined three S-transforms. In \cite{B03}, we know an atom of $\mu\boxtimes \nu$ at $0$ as follows:
 \begin{align}\label{atom:freemulti}
 (\mu\boxtimes\nu)(\{0\})=\max\{\mu(\{0\}),\nu(\{0\})\}.
 \end{align}
 Therefore we get $\mu^{\boxtimes n}(\{0\})=\mu(\{0\})$ for all $n\in\mathbb{N}$ by induction. Moreover, we give formulas of the S-transform with respect to free and Boolean additive convolutions. 
 
\begin{lemma}\label{lem:Sfb} \cite{BN08}
For $\mu\in\mathcal{P}_+$ with $\mu\neq \delta_0$, we have
\begin{align*}
S_{\mu^{\boxplus t}}(z)&=\frac{1}{t}S_\mu\left(\frac{z}{t}\right), \qquad t\ge 1.\\
S_{\mu^{\uplus t}}(z)&=\frac{1}{t}S_\mu\left( \frac{z}{t-z+tz}\right), \qquad t>0.
\end{align*}
\end{lemma}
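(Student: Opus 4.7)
The plan is to handle the two formulas separately, since free additive convolution is governed by additivity of the $R$-transform while Boolean additive convolution is governed by additivity of the self-energy $E_\mu$, and each requires a slightly different translation into $S$-transforms.

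For the free part, I would start from the well-known identity $R_{\mu^{\boxplus t}}(z) = tR_\mu(z)$, or equivalently $K_{\mu^{\boxplus t}}(z) = tK_\mu(z) + (1-t)/z$, where $K_\mu(z) = G_\mu^{\langle -1 \rangle}(z)$. The bridge between $K_\mu$ and $S_\mu$ comes from rewriting the defining equation of $\Psi_\mu$: from $\Psi_\mu(w) + 1 = w^{-1}G_\mu(1/w)$ one gets, after setting $w = \chi_\mu(z) := \Psi_\mu^{-1}(z)$, the functional identity
\begin{equation*}
K_\mu\bigl(z S_\mu(z)\bigr) = \frac{1+z}{z S_\mu(z)}.
\end{equation*}
Applying the same identity to $\nu := \mu^{\boxplus t}$ and substituting $K_\nu = tK_\mu + (1-t)/z$, the $(1-t)/z$ contribution cancels part of the numerator and one obtains $K_\mu(zS_\nu(z)) = (z+t)/(tzS_\nu(z))$. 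Comparing this with the identity for $S_\mu$ evaluated at the point $z/t$ yields $zS_\nu(z) = (z/t)S_\mu(z/t)$, which is the claim. A small technical point is that $t \ge 1$ ensures that $\mu^{\boxplus t}$ is defined and that the relevant analytic continuations of $\chi_\mu, S_\mu$ exist on the required domain, so $\chi_\nu(z)$ and $\chi_\mu(z/t)$ are simultaneously well-defined near $(\mu(\{0\})-1,0)$.

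For the Boolean part, I would exploit the fact that Boolean convolution acts linearly on $F_\mu$: formula \eqref{F} gives $F_{\mu^{\uplus t}}(z) = (1-t)z + tF_\mu(z)$. Converting to $\Psi$-transforms through the identity $F_\mu(1/w) = 1/(w(1+\Psi_\mu(w)))$, a direct algebraic manipulation produces
\begin{equation*}
\Psi_{\mu^{\uplus t}}(w) = \frac{t\,\Psi_\mu(w)}{1+(1-t)\Psi_\mu(w)}.
\end{equation*}
Solving this Möbius-type relation for $\Psi_\mu(w)$ in terms of $z := \Psi_{\mu^{\uplus t}}(w)$ gives $\Psi_\mu(w) = z/(t-z+tz)$, hence $\chi_{\mu^{\uplus t}}(z) = \chi_\mu(z/(t-z+tz))$. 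Multiplying by $(z+1)/z$ and simplifying using the factor $(z+t-z+tz)/z = t(1+z)/z$ produces exactly $S_{\mu^{\uplus t}}(z) = \frac{1}{t}S_\mu(z/(t-z+tz))$.

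The main obstacle in each half is bookkeeping rather than conceptual: for the free half, keeping the correct argument of $K_\mu$ straight while inverting $zS_\nu(z)$ back to $z/t$; for the Boolean half, verifying that the Möbius transformation $z \mapsto z/(t-z+tz)$ actually maps the natural domain of $S_{\mu^{\uplus t}}$ into that of $S_\mu$. The latter is a domain-tracking issue that can be settled using the atom formula \eqref{eq:Boolean_add_atom} together with Corollary \ref{cor:Boolean}, since both $S$-transforms are defined on intervals whose left endpoints are determined by the masses at zero, and these are compatible under the stated substitution.
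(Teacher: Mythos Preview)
The paper does not actually prove this lemma: it is quoted as a known result from \cite{BN08} and no argument is supplied. Your sketch is correct and is essentially the standard derivation one finds in that reference --- linearize at the level of $R$-transforms (resp.\ self-energies) and then push the identity through the bridge $K_\mu(zS_\mu(z))=(1+z)/(zS_\mu(z))$ (resp.\ $F_\mu(1/w)=1/(w(1+\Psi_\mu(w)))$). The only point worth tightening is the free case: after reaching $K_\mu(zS_\nu(z))=(z/t+1)/(zS_\nu(z))$ you implicitly use that the equation $K_\mu(u)=(w+1)/u$ determines $u$ uniquely as $wS_\mu(w)$ on the relevant domain; this follows from the local invertibility of $G_\mu$ (equivalently, of $\Psi_\mu$) near the boundary, which holds for $\mu\in\mathcal P_+\setminus\{\delta_0\}$, but it should be stated rather than left tacit.
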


We give a functional property of the S-transform.

\begin{lemma}\label{lem:image}(see \cite[Theorem 4.4]{HL00} and \cite{HM13})
Consider $\mu\in\mathcal{P}_+$ not being a Dirac measure. Then $S_\mu$ is strictly decreasing on $(\mu(\{0\})-1,0)$. Moreover, we have $S_\mu((\mu(\{0\})-1,0))=(b_\mu^{-1},a_\mu^{-1})$, where $0\le a_\mu<b_\mu\le \infty$ are defined by
\begin{align}\label{ab}
a_\mu:=\left( \int_0^\infty x^{-1} d\mu(x)\right)^{-1}, \qquad b_\mu:=\int_0^\infty xd\mu(x).
\end{align}
Note that if $\mu(\{0\})>0$, then we understand $a_\mu^{-1}=\infty$.
\end{lemma}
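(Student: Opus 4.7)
The plan is to first establish the properties of $\Psi_\mu$, then deduce those of $S_\mu$ by a differentiation argument reducing to a variance inequality, and finally compute the two boundary limits separately.

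First I would check that $\Psi_\mu$ is a strictly increasing smooth bijection from $(-\infty,0)$ onto $(\mu(\{0\})-1,0)$. Differentiating under the integral gives $\Psi_\mu'(w)=\int_0^\infty t(1-tw)^{-2}\,d\mu(t)>0$, strict because $\mu\neq\delta_0$. The boundary values $\Psi_\mu(w)\to 0$ as $w\to 0^-$ and $\Psi_\mu(w)\to\mu(\{0\})-1$ as $w\to-\infty$ follow from bounded convergence (the integrand $tw/(1-tw)\in(-1,0]$), so $\chi_\mu:=\Psi_\mu^{-1}$ is a strictly increasing bijection from $(\mu(\{0\})-1,0)$ onto $(-\infty,0)$.

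Next, for strict monotonicity of $S_\mu$, set $z=\Psi_\mu(w)$ and write $S_\mu(z)=(z+1)w/z$. A direct computation using $\chi_\mu'(z)=1/\Psi_\mu'(w)$ gives
\[
S_\mu'(z)\,z^2\Psi_\mu'(w)=\Psi_\mu(w)(1+\Psi_\mu(w))-w\Psi_\mu'(w).
\]
With $f_w(t):=1/(1-tw)>0$ one verifies by direct substitution that $\Psi_\mu(w)=\int(f_w-1)\,d\mu$, $1+\Psi_\mu(w)=\int f_w\,d\mu$, and $w\Psi_\mu'(w)=\int f_w(f_w-1)\,d\mu$. Plugging these into the identity above collapses the right-hand side to $\left(\int f_w\,d\mu\right)^2-\int f_w^2\,d\mu=-\mathrm{Var}_\mu(f_w)$, which is strictly negative because $w<0$ and $\mu$ is not Dirac, so $f_w$ is not $\mu$-a.s.\ constant. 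Since $z^2\Psi_\mu'(w)>0$, this forces $S_\mu'(z)<0$.

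For the image I would compute both endpoints. As $z\to 0^-$ (equivalently $w\to 0^-$), rewrite $S_\mu(z)=(z+1)/(z/w)$ and note $z/w=\int t/(1-tw)\,d\mu\nearrow b_\mu$ by monotone convergence, giving $S_\mu(z)\to b_\mu^{-1}$. As $z\to(\mu(\{0\})-1)^+$, substitute $u=1/w\to 0^-$ and simplify to
\[
S_\mu(z)=\frac{\int(t-u)^{-1}\,d\mu(t)}{\int t(t-u)^{-1}\,d\mu(t)}.
\]
The denominator tends to $\mu((0,\infty))=1-\mu(\{0\})$ by bounded convergence. If $\mu(\{0\})=0$, the numerator increases to $\int t^{-1}\,d\mu=a_\mu^{-1}\in(0,\infty]$ by monotone convergence, yielding $S_\mu(z)\to a_\mu^{-1}$; if $\mu(\{0\})>0$, the atomic part contributes $\mu(\{0\})/|u|\to+\infty$, so $S_\mu(z)\to+\infty$, consistent with the convention $a_\mu^{-1}=\infty$. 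Continuity and strict monotonicity then yield the image $(b_\mu^{-1},a_\mu^{-1})$; the strict inequality $a_\mu<b_\mu$ comes from Cauchy--Schwarz applied to $\sqrt{x}$ and $1/\sqrt{x}$, forced strict by the non-Dirac hypothesis.

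The main obstacle is spotting the substitution $f_w(t)=1/(1-tw)$ that turns the sign-of-$S_\mu'$ calculation into a genuine variance; without it, the combination of $\Psi_\mu$, $\Psi_\mu'$, and $w$ looks opaque. The boundary analysis at $\mu(\{0\})-1$ is routine but requires case-splitting on whether $\mu$ charges the origin.
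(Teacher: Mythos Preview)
The paper does not prove this lemma; it is quoted from \cite[Theorem 4.4]{HL00} and \cite{HM13} as a known result, so there is nothing in the paper to compare your argument against directly. Your self-contained proof is correct: the variance identity $S_\mu'(z)\,z^2\Psi_\mu'(w)=-\mathrm{Var}_\mu(f_w)$ with $f_w(t)=(1-tw)^{-1}$ is exactly the standard mechanism used in those references for strict monotonicity, and your endpoint computations via monotone/dominated convergence (with the case split on $\mu(\{0\})$) match the usual argument as well.
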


Tucci (\cite{T10}), Haagerup and M\"{o}ller (\cite{HM13}) give the following limit theorem for free multiplicative convolution.

\begin{proposition}\label{prop:HMlimit}\cite{T10, HM13}
Consider $\mu\in\mathcal{P}_+$. A sequence of probability measures $(\mu^{\boxtimes n})^{1/n}$, weakly converges to some probability measure (namely, $\Phi(\mu)$) on $[0,\infty)$.  If $\mu$ is a Dirac measure on $[0,\infty)$, then $\Phi(\mu)=\mu$. If $\mu$ is not a Dirac measure on $[0,\infty)$, then $\Phi(\mu)$ is uniquely determined and it satisfies
\begin{align*}
\Phi(\mu)(\{0\})=\mu(\{0\}), \qquad \Phi(\mu)\left(\left[0,\frac{1}{S_\mu(x-1)} \right]\right)=x
\end{align*}
for any $x\in(\mu(\{0\}),1)$. The support of $\Phi(\mu)$ is the closure of $(a_\mu,b_\mu)$.
\end{proposition}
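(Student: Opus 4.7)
The plan is to dispose of the Dirac case trivially, locate the atom at $0$ via \eqref{atom:freemulti}, construct $\Phi(\mu)$ directly from the $S$-transform inverse, and then establish the weak convergence $(\mu^{\boxtimes n})^{1/n}\to \Phi(\mu)$.

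If $\mu=\delta_a$ with $a\ge 0$, then an $S$-transform computation (or an induction based on $\delta_a\boxtimes\delta_b=\delta_{ab}$) yields $\mu^{\boxtimes n}=\delta_{a^n}$, so $(\mu^{\boxtimes n})^{1/n}=\delta_a=\mu$ for every $n$; one simply sets $\Phi(\mu):=\mu$ in this case. Assume henceforth that $\mu$ is not a Dirac measure. Iterating \eqref{atom:freemulti} gives $\mu^{\boxtimes n}(\{0\})=\mu(\{0\})$ for every $n\in\mathbb{N}$, and because the map $t\mapsto t^{1/n}$ fixes $0$, any weak cluster point of the sequence $(\mu^{\boxtimes n})^{1/n}$ must place mass $\mu(\{0\})$ at the origin.

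For the remaining part of the measure I would build the candidate limit directly. By Lemma \ref{lem:image}, $S_\mu$ is a continuous strictly decreasing bijection from $(\mu(\{0\})-1,0)$ onto $(b_\mu^{-1},a_\mu^{-1})$, so the map $x\mapsto 1/S_\mu(x-1)$ is a continuous strictly increasing bijection from $(\mu(\{0\}),1)$ onto $(a_\mu,b_\mu)$. Define $\Phi(\mu)$ to be the unique probability measure on $[0,\infty)$ with $\Phi(\mu)(\{0\})=\mu(\{0\})$ and $\Phi(\mu)([0,1/S_\mu(x-1)])=x$ for all $x\in(\mu(\{0\}),1)$. The image of this parametrization is exactly $(a_\mu,b_\mu)$, which forces $\supp \Phi(\mu)=\overline{(a_\mu,b_\mu)}$ and settles the support claim.

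The remaining task, which is the main obstacle, is the weak convergence itself. The starting identity is $S_{\mu^{\boxtimes n}}=S_\mu^{\,n}$, which via the definition of the $S$-transform rewrites as $\Psi_{\mu^{\boxtimes n}}^{-1}(z)=\tfrac{z}{z+1}\,S_\mu(z)^n$ on $(\mu(\{0\})-1,0)$. Fixing $x_0\in(\mu(\{0\}),1)$ and setting $y_0:=1/S_\mu(x_0-1)\in(a_\mu,b_\mu)$, the goal reduces to the pointwise CDF convergence $\mu^{\boxtimes n}([0,y_0^{\,n}])\to x_0$; together with the atom bookkeeping above, this yields convergence of distribution functions at every continuity point of $\Phi(\mu)$, hence the desired weak convergence. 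I would carry this out by relating $\Psi_{\mu^{\boxtimes n}}$ to the Cauchy transform via $G_{\mu^{\boxtimes n}}(w)=\tfrac{1}{w}\bigl(1+\Psi_{\mu^{\boxtimes n}}(1/w)\bigr)$, inverting $\Psi_{\mu^{\boxtimes n}}^{-1}$ to obtain a usable density formula, and then performing a saddle-point style asymptotic analysis keyed on whether $|S_\mu(z)|$ is larger or smaller than $1/y_0$. The delicate point is that $S_\mu(z)^n$ is exponential in $n$ while the rescaling $t\mapsto t^{1/n}$ is only polynomial, so this asymptotic matching is the heart of the matter; a cleaner alternative, in the spirit of \cite{HM13}, is to first truncate $\mu$ to compact support (where Tucci's original argument via random matrix models applies) and then remove the truncation using continuity of $\boxtimes$ under weak convergence.
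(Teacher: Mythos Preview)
The paper does not prove this proposition at all: it is quoted verbatim from \cite{T10,HM13} as a background result and used as a black box throughout. There is therefore no ``paper's own proof'' to compare against.

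As for your outline on its own merits: the reduction of the Dirac case, the atom bookkeeping via \eqref{atom:freemulti}, and the construction of the candidate limit $\Phi(\mu)$ from the inverse $S$-transform are all correct and are indeed how the Haagerup--M\"{o}ller argument begins. One small gap: from $(\mu^{\boxtimes n})^{1/n}(\{0\})=\mu(\{0\})$ for every $n$ you only get $\nu(\{0\})\ge \mu(\{0\})$ for a weak cluster point $\nu$ by Portmanteau; the reverse inequality has to come out of the full CDF convergence on $(a_\mu,b_\mu)$, not from the atom alone. The substantive step---showing $\mu^{\boxtimes n}([0,y_0^{\,n}])\to x_0$---is where the real work lies, and your sketch here is too loose to assess. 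The ``saddle-point style asymptotic analysis'' is not how the Haagerup--M\"{o}ller proof proceeds: they work directly with the moment transform $\Psi_{\mu^{\boxtimes n}}$ and its inverse on the negative half-line, using monotonicity rather than any contour or steepest-descent argument. Your alternative suggestion (truncate to compact support, invoke Tucci, remove the truncation) is closer in spirit, though the continuity of $\Phi$ under weak approximation is itself nontrivial and is essentially what \cite{HM13} establishes. In short, the architecture is right, but the analytic core is still a promissory note.
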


We prove that the operator $\Phi$ commutes with the dilation.
\begin{lemma}\label{lem:dP}
For all $c>0$, we have $D_c\circ \Phi=\Phi\circ D_c$ on $\mathcal{P}_+$.
\end{lemma}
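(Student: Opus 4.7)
The plan is to unfold the definition of $\Phi$ and show that the dilation commutes with each step used to build it, namely the free multiplicative convolution powers and the $n$-th root, then pass to the weak limit. The Dirac case is trivial (since $\Phi$ fixes Dirac measures and $D_c\delta_a=\delta_{ca}$), so I would assume $\mu$ is not a Dirac measure.

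First I would check that $D_c(\mu)^{\boxtimes n}=D_{c^n}(\mu^{\boxtimes n})$. The cleanest way is via the $S$-transform. A direct substitution in the definition gives
\begin{equation*}
\Psi_{D_c(\mu)}(x)=\int_0^\infty\frac{tx}{1-tx}\,d(D_c\mu)(t)=\int_0^\infty\frac{s(cx)}{1-s(cx)}\,d\mu(s)=\Psi_\mu(cx),
\end{equation*}
so $\Psi_{D_c(\mu)}^{-1}(z)=c^{-1}\Psi_\mu^{-1}(z)$ and therefore $S_{D_c(\mu)}(z)=c^{-1}S_\mu(z)$. Applying the multiplicativity $S_{\mu\boxtimes\nu}=S_\mu S_\nu$ from \cite{BV93} $n$ times yields $S_{D_c(\mu)^{\boxtimes n}}(z)=c^{-n}S_{\mu^{\boxtimes n}}(z)=S_{D_{c^n}(\mu^{\boxtimes n})}(z)$, and the $S$-transform determines the measure on $\mathcal{P}_+\setminus\{\delta_0\}$, so the claimed identity holds.

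Next I would show $(D_{c^n}(\mu^{\boxtimes n}))^{1/n}=D_c((\mu^{\boxtimes n})^{1/n})$. This is immediate on the random-variable level: if $X\sim\mu^{\boxtimes n}$ then $c^nX\sim D_{c^n}(\mu^{\boxtimes n})$, and $(c^nX)^{1/n}=c\,X^{1/n}$, which is distributed as $D_c((\mu^{\boxtimes n})^{1/n})$. Combining the two displayed identities gives
\begin{equation*}
(D_c(\mu)^{\boxtimes n})^{1/n}=D_c\bigl((\mu^{\boxtimes n})^{1/n}\bigr),\qquad n\in\N.
\end{equation*}

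Finally, since $D_c$ is the pushforward under the continuous map $x\mapsto cx$, it is weakly continuous on $\mathcal{P}_+$. Letting $n\to\infty$ in the last identity and using Proposition \ref{prop:HMlimit} on both sides gives $\Phi(D_c\mu)=D_c(\Phi\mu)$, which is the claim. There is no real obstacle here; the only thing to be careful about is to note that the $S$-transforms on both sides of Step 1 are defined on the same interval because $D_c(\mu)(\{0\})=\mu(\{0\})$, so the equality of $S$-transforms genuinely identifies the two measures.
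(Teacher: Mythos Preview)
Your proposal is correct and follows essentially the same route as the paper: establish $D_c\bigl((\mu^{\boxtimes n})^{1/n}\bigr)=(D_c(\mu)^{\boxtimes n})^{1/n}$ for each $n$, then pass to the weak limit using the continuity of $D_c$. The paper's proof simply asserts the chain $D_c((\mu^{\boxtimes n})^{1/n})=(D_{c^n}(\mu^{\boxtimes n}))^{1/n}=(D_c(\mu)^{\boxtimes n})^{1/n}$ without further comment, whereas you supply justifications (the $S$-transform computation for the dilation--$\boxtimes$ commutation, and the random-variable argument for the $n$-th root); your separate treatment of Dirac measures is not needed since the chain holds for those as well, but it does no harm.
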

\begin{proof}
For all $\mu\in\mathcal{P}_+$ and for all $n\in\mathbb{N}$, we have
\begin{align*}
D_c((\mu^{\boxtimes n})^{1/n})=(D_{c^n}(\mu^{\boxtimes n}))^{1/n}=(D_c(\mu)^{\boxtimes n})^{1/n}.
\end{align*}
As $n\rightarrow\infty$, we obtain $D_c\circ \Phi(\mu)=\Phi\circ D_c(\mu)$.
\end{proof}

We give a relation between $\Phi$ and S-transform as follows.
\begin{lemma}\label{lem:pS}
For all $\mu\in \mathcal{P}_+$ not being a Dirac measure and for all $x\in (a_\mu,b_\mu)$, we have
\begin{align*}
\Phi(\mu)([0,x])=S_\mu^{-1}\left(\frac{1}{x}\right)+1,
\end{align*}
and therefore $(a_\mu,b_\mu)=\{x:\Phi(\mu)([0,x])\in (\mu(\{0\}),1)\}$.
\end{lemma}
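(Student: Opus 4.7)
The plan is to derive the formula by directly inverting the relation stated in Proposition~\ref{prop:HMlimit} and then use Lemma~\ref{lem:image} to pin down the domain.

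First I would rewrite the identity from Proposition~\ref{prop:HMlimit},
\[
\Phi(\mu)\!\left(\left[0,\tfrac{1}{S_\mu(x-1)}\right]\right)=x,\qquad x\in(\mu(\{0\}),1),
\]
by performing the change of variable $y=1/S_\mu(x-1)$. To see that this substitution is legitimate on the claimed domain, I would apply Lemma~\ref{lem:image}: as $x$ runs through $(\mu(\{0\}),1)$, $x-1$ runs through $(\mu(\{0\})-1,0)$, and $S_\mu$ is a strictly decreasing bijection of this interval onto $(b_\mu^{-1},a_\mu^{-1})$. Consequently $S_\mu(x-1)$ runs bijectively over $(b_\mu^{-1},a_\mu^{-1})$ and $y=1/S_\mu(x-1)$ runs bijectively (and monotonically) over $(a_\mu,b_\mu)$. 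Inverting gives $x-1=S_\mu^{-1}(1/y)$, hence
\[
\Phi(\mu)([0,y]) = x = S_\mu^{-1}\!\left(\tfrac{1}{y}\right)+1, \qquad y\in(a_\mu,b_\mu),
\]
which is exactly the claimed formula.

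For the second assertion, I would use the formula just obtained together with the monotonicity from Lemma~\ref{lem:image}. The composition $y\mapsto S_\mu^{-1}(1/y)+1$ is a strictly increasing bijection from $(a_\mu,b_\mu)$ onto $(\mu(\{0\}),1)$, so the first formula shows
\[
\{y\in(a_\mu,b_\mu):\Phi(\mu)([0,y])\in(\mu(\{0\}),1)\}=(a_\mu,b_\mu).
\]
It remains to rule out values $y\le a_\mu$ or $y\ge b_\mu$. For this I would invoke the last part of Proposition~\ref{prop:HMlimit}: the support of $\Phi(\mu)$ is $\overline{(a_\mu,b_\mu)}$ and $\Phi(\mu)(\{0\})=\mu(\{0\})$, so $\Phi(\mu)([0,y])=\mu(\{0\})$ for $y\in[0,a_\mu)$ and $\Phi(\mu)([0,y])=1$ for $y\ge b_\mu$; neither lies in the open interval $(\mu(\{0\}),1)$. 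Combining the two inclusions yields the claimed equality of sets.

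The only potentially delicate point is the handling of the endpoints $a_\mu$ and $b_\mu$, in particular when $\mu(\{0\})>0$ (so $a_\mu^{-1}=\infty$); here one needs Lemma~\ref{lem:image} to assert surjectivity of $S_\mu$ onto the whole interval $(b_\mu^{-1},a_\mu^{-1})$ so that the change of variables covers all of $(a_\mu,b_\mu)$. Everything else is formal manipulation of bijections.
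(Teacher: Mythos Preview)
Your proof is correct and follows essentially the same route as the paper: both use Lemma~\ref{lem:image} to see that $1/x\in(b_\mu^{-1},a_\mu^{-1})=S_\mu((\mu(\{0\})-1,0))$ for $x\in(a_\mu,b_\mu)$, and then plug $S_\mu^{-1}(1/x)+1$ into the identity of Proposition~\ref{prop:HMlimit}. The paper's version is terser and leaves the ``therefore'' clause implicit, whereas you spell out the set equality using the support statement in Proposition~\ref{prop:HMlimit}; this extra care is fine but not a different method.
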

\begin{proof}
For all $x\in (a_\mu, b_\mu)$ we have $S_\mu^{-1}(1/x)\in (\mu(\{0\})-1,0)$ by Lemma \ref{lem:image}. Since 
\begin{align*}
x=\frac{1}{S_\mu(S_\mu^{-1}(1/x)+1-1)},
\end{align*}
we have
\begin{align*}
\Phi(\mu)([0,x])=\Phi(\mu)\left( \left[0,  \frac{1}{S_\mu(S_\mu^{-1}(1/x)+1-1)}\right]\right)=S_\mu^{-1}\left(\frac{1}{x}\right)+1,
\end{align*}
for all $x\in (a_\mu,b_\mu)$.
\end{proof}

\begin{remark}\label{rem:support}
By Proposition \ref{prop:HMlimit} and Lemma \ref{lem:pS}, the support of $\Phi(\mu)$ is the closure of $\{x:\Phi(\mu)([0,x])\in (\mu(\{0\}),1)\}$.
\end{remark}


\subsection{Max-convolutions}


\subsubsection{Classical max-convolution}

Let $(\Omega, \mathcal{F},\mathbb{P})$ be a classical probability space. For an ($\mathcal{F}$-measurable) real random variable $X$, we define a distribution function $F_X$ of $X$, that is, $F_X(\cdot):=\mathbb{P}(X\le \cdot)$. For independent real random variables $X$ and $Y$, we have 
\begin{align*}
F_{X\lor Y}(x)&=\mathbb{P}(X\lor Y\le x)=\mathbb{P}(X\le x, Y\le x)\\
&=\mathbb{P}(X\le x)\mathbb{P}(Y\le x)=F_X(x)F_Y(x), \qquad x\in\mathbb{R},
\end{align*}
where $X\lor Y:=\max\{X,Y\}$. According to the above calculation, we define the {\it classical max-convolution} $\mu\lor \nu$ of $\mu,\nu\in\mathcal{P}$ as 
\begin{align*}
\mu\lor \nu ((-\infty,\cdot]):=\mu((-\infty, \cdot])\nu((-\infty,\cdot]).
\end{align*}

For $n\in\mathbb{N}$ and $\mu\in\mathcal{P}$, we define $\mu^{\lor n}:=\overbrace{\mu\lor\cdots \lor\mu}^{n\text{ times}}$. More generally, for $t>0$, we define
\begin{align*}
\mu^{\lor t}((-\infty,\cdot]):=\mu((-\infty,\cdot])^t.
\end{align*}

A non-trivial distribution function $F$ is said to be {\it max-stable} if for any $n\in\mathbb{N}$, there exist $a_n>0$ and $b\in\mathbb{R}$ such that
\begin{align*}
F^{\lor n}(a_n\cdot+b_n)\xrightarrow{w} F(\cdot), \qquad n\rightarrow\infty,
\end{align*}
where $\xrightarrow{w}$ means the convergence at every point of continuity of $F$. The max-stable distributions are characterized as follows.

\begin{proposition}\cite{FT28,F27,G43} $F$ is max-stable if and only if there exist $a>0$ and $b\in\mathbb{R}$ such that $F(ax+b)$ is one of the following distributions:
\begin{align*}
C_{\text{I}}(x):&=\exp(-\exp(-x)) \hspace{3mm}\text{ ({\it Gumbel distribution})};\\
C_{\text{II},\alpha}(x):&=\exp(-x^{-\alpha}) \text{ for } x>0 \text{ and }\alpha>0  \hspace{3mm}\text{ ({\it Fr\'{e}chet distribution})};\\
C_{\text{III},\alpha}(x):&=\exp(-(-x)^\alpha) \text{ for } x\le 0  \text{ and } \alpha>0 \hspace{3mm}\text{ ({\it Weibull distribution})}.
\end{align*}
\end{proposition}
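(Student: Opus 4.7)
The plan follows the classical Fisher--Tippett--Gnedenko route, which splits into an easy verification and a much harder classification.

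For the ($\Leftarrow$) direction, I verify max-stability of each of the three normalized laws by direct substitution. The identity $C_{\text{I}}^{\lor n}(x+\log n) = C_{\text{I}}(x)$ holds exactly with $a_n=1$, $b_n=\log n$; the identity $C_{\text{II},\alpha}^{\lor n}(n^{1/\alpha} x) = C_{\text{II},\alpha}(x)$ holds with $a_n=n^{1/\alpha}$, $b_n=0$; and $C_{\text{III},\alpha}^{\lor n}(n^{-1/\alpha} x) = C_{\text{III},\alpha}(x)$ holds with $a_n=n^{-1/\alpha}$, $b_n=0$. Since the defining convergence is preserved under a preliminary affine change of variables, this direction is complete.

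For the ($\Rightarrow$) direction, the first and hardest milestone is to promote the defining weak convergence to the exact functional equation
\[
F(x)^t = F(\alpha(t)\,x+\beta(t)), \qquad t>0,
\]
for some $\alpha(t)>0$ and $\beta(t)\in\mathbb{R}$. For each positive integer $k$, both $F^{kn}(a_{kn} x + b_{kn}) \to F(x)$ and $F^{kn}(a_n x + b_n) = (F^n(a_n x + b_n))^k \to F(x)^k$ hold simultaneously, so because $F$ is non-degenerate Khinchin's convergence-of-types lemma forces $a_n / a_{kn} \to \alpha(k) > 0$ and $(b_n - b_{kn})/a_{kn} \to \beta(k)$, and yields $F(x)^k = F(\alpha(k) x + \beta(k))$. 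Inverting the equation gives $\alpha(1/k) = \alpha(k)^{-1}$ and $\beta(1/k) = -\alpha(k)^{-1}\beta(k)$, extending it to all positive rationals, and monotonicity of $F$ in $x$ (together with monotonicity of $F(x)^t$ in $t$) pushes it to all $t > 0$ by a Helly-type argument.

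The final step solves the functional equation. Iterating in $s, t$ gives $\alpha(st) = \alpha(s)\alpha(t)$ and $\beta(st) = \alpha(s)\beta(t) + \beta(s)$, and continuity forces $\alpha(t) = t^{-\gamma}$ for some $\gamma \in \mathbb{R}$. Writing $H(x) := -\log F(x)$, the equation becomes $tH(x) = H(\alpha(t) x + \beta(t))$, which splits into three cases. When $\gamma = 0$, one has $\alpha \equiv 1$ and $\beta(t) = c\log t$, which forces $H(x) \propto e^{-x/c}$ with $c < 0$, yielding $C_{\text{I}}$ after an affine shift. When $\gamma \neq 0$, the affine action $x \mapsto \alpha(t) x + \beta(t)$ has a unique fixed point $x_*$ about which $H$ must be a pure power $(x - x_*)^{-1/\gamma}$, giving the Fr\'echet form $C_{\text{II},1/\gamma}$ when $\gamma > 0$ and the Weibull form $C_{\text{III},-1/\gamma}$ when $\gamma < 0$; the affine normalization absorbs the implicit constants. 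The main obstacle is the functional-equation step of the forward direction: extracting Khinchin's lemma from weak convergence so that the normalizing sequences actually converge (rather than only along subsequences) and then extending the scalar exponent continuously from $\mathbb{N}$ to $(0, \infty)$ is the real technical heart of the argument; once the functional equation is in hand, the classification is a routine case analysis.
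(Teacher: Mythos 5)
This proposition is the classical Fisher--Tippett--Gnedenko theorem, which the paper states without proof, citing \cite{FT28,F27,G43}; there is therefore no in-paper argument to compare yours against, and your proposal must be judged on its own. As such it is a correct outline of the standard proof. The ($\Leftarrow$) direction is verified exactly as you say: each of the three normalized laws satisfies $F^{\lor n}(a_n x+b_n)=F(x)$ identically, and max-stability is invariant under a fixed affine change of variable. For ($\Rightarrow$) you correctly identify the two genuinely nontrivial steps: (i) upgrading the weak convergence $F^{\lor n}(a_nx+b_n)\xrightarrow{w}F$ to the exact relation $F(x)^k=F(\alpha(k)x+\beta(k))$ via Khinchin's convergence-of-types lemma applied to the two normalizations of $F^{\lor nk}$ (this is where non-degeneracy of $F$, guaranteed by the paper's ``non-trivial'' hypothesis, is essential), and (ii) extending the exponent from $\mathbb{N}$ through $\mathbb{Q}_{>0}$ to $(0,\infty)$ and solving the resulting Cauchy-type equations $\alpha(st)=\alpha(s)\alpha(t)$, $\beta(st)=\alpha(s)\beta(t)+\beta(s)$ under a measurability/monotonicity hypothesis that must itself be verified for the functions $\alpha,\beta$ so defined. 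The subsequent trichotomy on $\gamma$ is the standard one and lands on the three stated types up to affine equivalence. One small slip: in the Gumbel case, with your convention $\beta(t)=c\log t$ and $c<0$, the equation $tH(x)=H(x+\beta(t))$ forces $H(x)\propto e^{x/c}$ (decreasing, as $H=-\log F$ must be), not $e^{-x/c}$; this is a sign bookkeeping error only and does not affect the conclusion. With the details of Khinchin's lemma and the regularity of $\alpha,\beta$ filled in from the standard references, the argument is complete.
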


The above distributions are called {\it extreme value distributions}. In mathematical statistics, extreme value distributions are often used to analyze statistical data of rare phenomenon.


\subsubsection{Free max-convolution}

In free probability theory, the max-convolution was also introduced in \cite{BV06}. Let $(\mathcal{M},\tau)$ be a tracial $W^*$-probability space, that is, $\mathcal{M}$ is a von Neumann algebra and $\tau$ is a normal faithful tracial state on $\mathcal{M}$. We may assume that $\mathcal{M}$ acts on a Hilbert space $\mathcal{H}$. Denoted by $\text{Proj}(\mathcal{M})$ the set of all projections in $\mathcal{M}$ and denoted by $\mathcal{M}_{sa}$ the set of all selfadjoint operators in $\mathcal{M}$. For $P,Q\in\text{Proj}(\mathcal{M})$, we define  $P\lor Q$ as the selfadjoint operator onto $(P\lor Q)\mathcal{H}:=\text{cl}(P\mathcal{H}+Q\mathcal{H})$. Then $P\lor Q\in\text{Proj}(\mathcal{M})$ and it is the maximum of $P$ and $Q$ with respect to the usual operator order. However, it is not necessary that there is the maximum of selfadjoint operators in $\mathcal{M}= \mathcal{B}(\mathcal{H})$ with respect to the operator order (see \cite{K51}). Instead of the operator order, Olson \cite{O71} (see also \cite{A89, BV06}) introduced the spectral order to define the maximum of (bounded) selfadjoint operators on $\mathcal{H}$. In \cite{BV06}, Ben Arous and Voiculescu extended the spectral order to general von Neumann algebras as follows. For $X,Y\in\mathcal{M}_{sa}$, we define $X\prec Y$ by
\begin{align*}
E_X((x,\infty))\le E_Y((x,\infty)), \qquad x\in\mathbb{R},
\end{align*}
where $E_X$ is the spectral projection of $X$ and $\le$ is the usual operator order. The order $\prec$ is called the {\it spectral order}. For any $X,Y\in\mathcal{M}_{sa}$, we define $X\lor Y$ by
\begin{align*}
E_{X\lor Y}((x,\infty)):=E_X((x,\infty))\lor E_Y((x,\infty)), \qquad x\in\mathbb{R}.
\end{align*}
Then $X\lor Y$ is well-defined and $X\lor Y\in\mathcal{M}_{sa}$. Moreover $X\lor Y$ is the maximum of $X$ and $Y$ with respect to the spectral order. Finally, Ben Arous and Voiculescu extended the spectral order to the set of all (unbounded) selfadjoint operators affiliated with $\mathcal{M}$. A (unbounded) selfadjoint operator $X$ on $\mathcal{H}$ is said to be {\it affiliated with $\mathcal{M}$} if $f(X)\in\mathcal{M}$ for all bounded Borel functions $f$ on $\mathbb{R}$, where $f(X)$ is a measurable functional calculus of $X$ with respect to $f$. Note that $X$ is a bounded selfadjoint operator affiliated with $\mathcal{M}$ if and only if $X\in\mathcal{M}$. For any Borel sets $B$ in $\mathbb{R}$, if $f=\mathbf{1}_{B}$, then $f(X)=E_X(B)$. For a selfadjoint operator $X$ affiliated with $\mathcal{M}$, we define a (spectral) distribution function by
\begin{align*}
F_X(x):=\tau(E_X((-\infty,x])), \qquad x\in\mathbb{R}.
\end{align*}
If $X\sim \mu$, then we have $F_X(x)=\mu((-\infty,x])$ for all $x\in\mathbb{R}$.

\begin{proposition}\cite{BV06}
Let $X,Y$ be freely independent real random variables (selfadjoint operators) affiliated with $\mathcal{M}$. Then we have $F_{X\lor Y}=\max\{0,F_X+F_Y-1\}$.
\end{proposition}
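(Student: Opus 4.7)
The plan is to reduce the identity to a classical fact about the trace of the join of two free projections, and then unwind the definitions. First, by the definition of $X \lor Y$ via spectral projections on $(x,\infty)$,
\begin{align*}
F_{X\lor Y}(x) = 1 - \tau\bigl(E_{X\lor Y}((x,\infty))\bigr) = 1 - \tau(P_x \lor Q_x),
\end{align*}
where I abbreviate $P_x := E_X((x,\infty))$ and $Q_x := E_Y((x,\infty))$. These are bounded projections in $\mathcal{M}$ with $\tau(P_x) = 1 - F_X(x)$ and $\tau(Q_x) = 1 - F_Y(x)$. Since $X$ and $Y$ are freely independent selfadjoint operators affiliated with $\mathcal{M}$, the von Neumann subalgebras they generate are free, so in particular $P_x$ and $Q_x$ are free projections in $(\mathcal{M},\tau)$ for every $x \in \mathbb{R}$.

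The heart of the argument is then the following classical identity: if $P, Q \in \text{Proj}(\mathcal{M})$ are free with $\tau(P) = p$ and $\tau(Q) = q$, then
\begin{align*}
\tau(P \lor Q) = \min\{1,\, p+q\}.
\end{align*}
I would prove this from the well-known spectral decomposition of $P+Q$ for two free projections: its distribution has atoms at $0$ and $2$ of masses $\max\{0,1-p-q\}$ and $\max\{0,p+q-1\}$ respectively, plus an absolutely continuous bulk (an arcsine-type density on $(0,2)$). Since $(P\lor Q)\mathcal{H} = \overline{P\mathcal{H}+Q\mathcal{H}} = \overline{\text{ran}(P+Q)}$, the projection $P\lor Q$ coincides with $I - E_{P+Q}(\{0\})$, so $\tau(P\lor Q) = 1 - \max\{0,1-p-q\} = \min\{1,p+q\}$. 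Equivalently, one may quote the dual identity $\tau(P\land Q) = \max\{0,p+q-1\}$ together with the modular relation $\tau(P\lor Q) + \tau(P\land Q) = \tau(P)+\tau(Q)$.

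Substituting $p = 1 - F_X(x)$ and $q = 1 - F_Y(x)$ into this identity yields
\begin{align*}
\tau(P_x \lor Q_x) = \min\{1,\, 2 - F_X(x) - F_Y(x)\},
\end{align*}
and consequently
\begin{align*}
F_{X\lor Y}(x) = 1 - \min\{1,\, 2 - F_X(x) - F_Y(x)\} = \max\{0,\, F_X(x) + F_Y(x) - 1\},
\end{align*}
which is the desired formula. The main obstacle is the free-projection trace formula itself, but this is standard in free probability; once freeness of the bounded spectral projections $P_x, Q_x$ has been extracted from the definition of freeness for operators affiliated with $\mathcal{M}$, no further complication from unboundedness of $X, Y$ arises, and what remains is purely arithmetic.
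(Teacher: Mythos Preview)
Your argument is correct and is essentially the one given by Ben Arous and Voiculescu in \cite{BV06}. Note, however, that the present paper does not prove this proposition at all: it is merely quoted from \cite{BV06} as background, so there is no ``paper's own proof'' to compare against. Your reduction to the trace of the join of two free projections via $P_x=E_X((x,\infty))$, $Q_x=E_Y((x,\infty))$ and the identity $\tau(P\lor Q)=\min\{1,p+q\}$ (equivalently $\tau(P\land Q)=\max\{0,p+q-1\}$) for free projections is exactly the standard route, and your handling of the unbounded case---passing to bounded spectral projections, whose freeness follows from that of the generated von Neumann subalgebras---is the right way to dispose of any affiliation issues.
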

For any distribution functions $F,G$ on $\mathbb{R}$, we define
\begin{align*}
F {\Box \hspace{-.75em} \lor} G:=\max\{0,F+G-1\}
\end{align*}
The operation ${\Box \hspace{-.75em} \lor}$ is called the {\it free max-convolution}. We write $\mu {\Box \hspace{-.75em} \lor} \nu$ as the distribution of the maximum of freely independent real random variables $X\sim \mu$ and $Y\sim \nu$, that is,
\begin{align*}
\mu {\Box \hspace{-.75em} \lor} \nu((-\infty,\cdot]):=\mu((-\infty,\cdot]) {\Box \hspace{-.75em} \lor}\nu((-\infty,\cdot])
\end{align*} 

For $n\in\mathbb{N}$ and $\mu\in\mathcal{P}$, we define $\mu^{\Box \hspace{-.55em} \lor n} :=\overbrace{\mu{\Box \hspace{-.75em} \lor} \cdots {\Box \hspace{-.75em} \lor} \mu}^{n \text{ times}}$. More generally, for $t\ge 1$, we define
\begin{align}\label{eq:free-max}
\mu^{\Box \hspace{-.55em} \lor t} ((-\infty,\cdot]):=\max\{t\mu((-\infty,\cdot])-(t-1),0\}.
\end{align}
For $\mu\in\mathcal{P}_+$, we get an atom of $\mu^{\Box \hspace{-.55em} \lor t}$ at $0$ as follows.
\begin{lemma}\label{lem:free-max-atom}
Consider $t> 1$ and $\mu\in\mathcal{P}_+$. Then $\mu^{\Box \hspace{-.55em} \lor t}$ has an atom $0$ if and only if $\mu(\{0\})>1-t^{-1}$. In the case, we have
\begin{align*}
\mu^{\Box \hspace{-.55em} \lor t}(\{0\})=t\mu(\{0\})-(t-1).
\end{align*}
\end{lemma}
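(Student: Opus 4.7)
The plan is to read off the atom at $0$ directly from the defining formula \eqref{eq:free-max} for the free max-convolution power, via the standard identification of a point mass with the jump of the distribution function.

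First I would unpack the distribution function. Write $F_t(x) := \mu^{\Box \hspace{-.55em} \lor t}((-\infty, x])$ and $F(x) := \mu((-\infty, x])$. Since $\mu \in \mathcal{P}_+$, one has $F(x) = 0$ for $x < 0$ and $F(0) = \mu(\{0\})$, so that
\begin{align*}
F_t(x) = \max\{tF(x) - (t-1),\, 0\}.
\end{align*}
For any $x < 0$ this gives $F_t(x) = \max\{-(t-1), 0\} = 0$ because $t > 1$; hence the left limit satisfies $F_t(0^-) = 0$.

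Next I would compute the jump at $0$. Since $\mu^{\Box \hspace{-.55em} \lor t}(\{0\}) = F_t(0) - F_t(0^-) = F_t(0)$, we get
\begin{align*}
\mu^{\Box \hspace{-.55em} \lor t}(\{0\}) = \max\{t\mu(\{0\}) - (t-1),\, 0\}.
\end{align*}
This quantity is strictly positive if and only if $t\mu(\{0\}) - (t-1) > 0$, which is exactly the condition $\mu(\{0\}) > 1 - t^{-1}$. In that case the max is attained by the first argument and equals $t\mu(\{0\}) - (t-1)$, giving the claimed formula.

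There is essentially no obstacle here: the lemma is a direct transcription of the definition once one notes that $F$ vanishes on $(-\infty, 0)$ so the truncation in \eqref{eq:free-max} kills the left limit, leaving only the jump at $0$ to be computed. The only mild care needed is to use $t > 1$ to ensure $F_t(0^-) = 0$ (for $t = 1$ the statement is vacuous since $\mu^{\Box \hspace{-.55em} \lor 1} = \mu$).
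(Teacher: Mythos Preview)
Your proof is correct and follows the same approach as the paper: both read off $\mu^{\Box \hspace{-.55em} \lor t}(\{0\})=\max\{t\mu(\{0\})-(t-1),0\}$ directly from the definition \eqref{eq:free-max} and then observe when this is strictly positive. Your version is simply more explicit about why the atom equals the value of the distribution function at $0$ (via $F_t(0^-)=0$), a step the paper leaves implicit.
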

\begin{proof}
By definition \eqref{eq:free-max}, we have
\begin{align*}
\mu^{\Box \hspace{-.55em} \lor t} (\{0\})=\max\{t\mu(\{0\})-(t-1),0\}.
\end{align*}
Therefore $\mu^{\Box \hspace{-.55em} \lor t}$ has an atom $0$ if and only if $t\mu(\{0\})-(t-1)>0$, i.e., $\mu(\{0\})>1-t^{-1}$.
\end{proof}

A non-trivial distribution function $F$ is said to be {\it freely max-stable} if for any $n\in\mathbb{N}$, there exists $a_n>0$ and $b_n\in\mathbb{R}$ such that
\begin{align*}
F^{\Box \hspace{-.55em} \lor n} (a_n \cdot+b_n) \xrightarrow{w} F(\cdot), \qquad n\rightarrow\infty,
\end{align*}
The freely max-stable distributions are characterized as follows.

\begin{proposition} \cite[Theorem 6.8]{BV06}
$F$ is freely max-stable if and only if there exist $a>0$ and $b\in\mathbb{R}$ such that $F(ax+b)$ is one of the following distributions:
\begin{align*}
F_{\text{I}}(x):&=\max\{0, 1-e^{-x}\} \hspace{3mm}\text{ ({\it Exponential distribution})};\\
F_{\text{II},\alpha}(x):&=\max\{0, 1-x^{-\alpha}\} \text{ for }\alpha>0 \hspace{3mm}\text{ ({\it Pareto distribution})};\\
F_{\text{III},\alpha}(x):&=1-|x|^\alpha \text{ for }  -1\le x\le 0 \text{ and } \alpha>0 \hspace{3mm}\text{ ({\it Beta law})}.
\end{align*}
The above distributions are called {\it free extreme value distributions}. 
\end{proposition}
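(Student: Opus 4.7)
The plan is to translate free max-stability into a functional equation for the survival function $G := 1 - F$ and then observe that this equation has exactly the same shape as the one solved by $-\log F$ in the classical Fisher-Tippett-Gnedenko theorem. First, from the definition \eqref{eq:free-max} one has the identity $1 - F^{\Box \hspace{-.55em} \lor n} = \min\{1, nG\}$, so the defining convergence $F^{\Box \hspace{-.55em} \lor n}(a_n x + b_n) \xrightarrow{w} F(x)$ is equivalent to
\begin{equation*}
\min\{1, nG(a_n x + b_n)\} \longrightarrow G(x)
\end{equation*}
at every continuity point of $F$. On the region $\{x : G(x) \in (0,1)\}$ this forces the cleaner relation $nG(a_n x + b_n) \to G(x)$.

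The next step is to upgrade this subsequential relation to an exact one-parameter functional equation. A Khinchin-type convergence-of-types argument applied to the distribution functions $F^{\Box \hspace{-.55em} \lor n}$, combined with the identity $(F^{\Box \hspace{-.55em} \lor n})^{\Box \hspace{-.55em} \lor k} = F^{\Box \hspace{-.55em} \lor (kn)}$ (immediate from \eqref{eq:free-max}) and the nondegeneracy of $F$, yields for each integer $k \ge 1$ constants $\alpha_k > 0$, $\beta_k \in \R$ with $F^{\Box \hspace{-.55em} \lor k}(\alpha_k x + \beta_k) = F(x)$ exactly, i.e.\ $G(\alpha_k x + \beta_k) = G(x)/k$. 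Monotonicity in $k$ and the relations $\alpha_{kl} = \alpha_k \alpha_l$, $\beta_{kl} = \alpha_k \beta_l + \beta_k$ extend this, via rational approximation and monotonicity, to a continuous family $(a_t, b_t)_{t \ge 1}$ satisfying
\begin{equation*}
G(a_t x + b_t) = \frac{G(x)}{t},
\end{equation*}
with $a_t = t^\gamma$ for some $\gamma \in \R$.

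It remains to solve this functional equation, which splits naturally by the sign of $\gamma$. If $\gamma > 0$, the upper endpoint of the support of $G$ must be infinite and, after centering, the equation forces $G(x) \propto x^{-1/\gamma}$, i.e.\ the Pareto law $F_{\text{II},\alpha}$ with $\alpha = 1/\gamma$. If $\gamma < 0$, the support has a finite upper endpoint $x_F$; translating so that $x_F = 0$ one obtains $G(x) \propto |x|^{-1/\gamma}$ on a bounded interval, which is the Beta law $F_{\text{III},\alpha}$ with $\alpha = -1/\gamma$. If $\gamma = 0$, the equation reduces to $G(x + b_t) = G(x)/t$; monotonicity forces $b_t = c \log t$ with $c > 0$, whose unique monotone solution is $G(x) \propto e^{-x/c}$, i.e.\ the exponential law $F_{\text{I}}$. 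In each case a single affine normalization absorbs the two free parameters.

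The main obstacle I anticipate is the first half of the second step: passing from discrete subsequential convergence to an exact continuous functional equation. The Khinchin-type lemma applies to the distribution functions $F^{\Box \hspace{-.55em} \lor n}$ just as in the classical theory, but one has to contend with the fact that these can be identically zero on a left tail (because of the $\max$ in \eqref{eq:free-max}), so nondegeneracy of $F$ has to be invoked carefully to pin down uniqueness of the scaling $(a_t, b_t)$ up to the obvious affine ambiguity. Once the exact relation $G(a_t x + b_t) = G(x)/t$ is in hand, the case analysis in the third step is routine and formally identical to the classical Fisher-Tippett-Gnedenko argument with $G$ substituted for $-\log F$ throughout.
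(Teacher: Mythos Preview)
The paper does not supply its own proof of this proposition: it is quoted verbatim as \cite[Theorem 6.8]{BV06} and used as background. So there is no ``paper's proof'' to compare against; the relevant benchmark is the original argument of Ben Arous and Voiculescu.

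Your outline is essentially that original argument. The key structural observation in \cite{BV06} is exactly the one you make in your first paragraph: writing $G=1-F$, the free max-power satisfies $1-F^{\Box\hspace{-.55em}\lor n}=\min\{1,nG\}$, so $G$ plays the same role under $\Box\hspace{-.55em}\lor$ that $-\log F$ plays under the classical $\lor$. Ben Arous and Voiculescu then transport the Fisher--Tippett--Gnedenko machinery through this dictionary, arriving at the functional equation $G(a_t x+b_t)=G(x)/t$ and the three-case resolution you describe. Your sketch follows this path faithfully, and the obstacle you flag (handling the flat zero region of $F^{\Box\hspace{-.55em}\lor n}$ when invoking convergence-of-types) is precisely the point where \cite{BV06} has to argue a little more carefully than in the classical setting; nondegeneracy of $F$ is indeed what rescues uniqueness there.

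One small remark: in the definition used here, free max-stability is phrased via weak convergence $F^{\Box\hspace{-.55em}\lor n}(a_n\cdot+b_n)\xrightarrow{w}F$ rather than exact equality, so the passage from convergence to the exact relation $G(\alpha_k x+\beta_k)=G(x)/k$ really does require the Khinchin step and is not automatic. You have identified this, but in a fully written proof it deserves a careful statement rather than a one-line appeal.
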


Define a function $\Lambda^{\lor}$ on $[0,1]$ by setting $\Lambda^{\lor}(0):=0$ and $\Lambda^{\lor}(x):=\max\{0,1+\log x\}$ for $x\in(0,1]$. If $F$ is a distribution function on $\mathbb{R}$, so is $\Lambda^{\lor}(F)$. Note that $\Lambda^{\lor}$ maps the classical extreme values to the corresponding type free extreme values.

For a probability measure $\mu$ on $\mathbb{R}$, we define the probability measure $\Lambda^{\lor}(\mu)$ such that 
\begin{align*}
\Lambda^{\lor}(\mu)((-\infty, \cdot]):=\Lambda^{\lor} (\mu((-\infty,\cdot])).
\end{align*}
It is known that the operator $\Lambda^{\lor}$ is a homomorphism from $(\mathcal{P},\lor)$ to $(\mathcal{P},{\Box \hspace{-.75em} \lor})$, that is,
\begin{align*}
\Lambda^{\lor}(\mu\lor \nu)=\Lambda^{\lor}(\mu){\Box \hspace{-.75em} \lor}\Lambda^{\lor}(\nu),
\end{align*}
for all probability measures $\mu$ and $\nu$ on $\mathbb{R}$.

The function $\Lambda^{\lor}$ is surjective on $[0,1]$, but it is not injective on $[0,1]$. This proof is very simple. Firstly, we show that it is surjective. Let $\Pi^{\lor}$ be a function on $[0,1]$ defined by
\begin{align*}
\Pi^{\lor}(x):=\exp(-(1-x)).
\end{align*}
For all $y\in[0,1]$, we have
\begin{equation}\label{surj of lambda}
\Lambda^{\lor}(\Pi^{\lor}(y))=\max\{0,1+\log(\exp(-(1-y)))\}=y.
\end{equation}
Next, we show that it is not injective. For example, if $x\neq y<e^{-1}$, then $\Lambda^{\lor}(x)=0=\Lambda^{\lor}(y)$. Therefore it is not injective.

For $\mu\in\mathcal{P}$, we define
\begin{align*}
\Pi^{\lor}(\mu)((-\infty,\cdot]):=\Pi^{\lor}(\mu([0,\cdot]))=\exp(-(1-\mu((-\infty,\cdot]))).
\end{align*}
The measure $\Pi^{\lor}(\mu)$ is called the {\it max-compound Poisson law with $\mu$}.


\subsubsection{Boolean max-convolution}

Let $\mathcal{H}$ be a Hilbert space and $\xi\in\mathcal{H}$ a unit vector. Define the vector state $\varphi_\xi$ by setting
\begin{align*}
\varphi_\xi(T):=\langle T\xi , \xi\rangle,
\end{align*}
for all selfadjoint operator $T$ on $\mathcal{H}$, where $\langle \cdot, \cdot \rangle$ is the inner product on $\mathcal{H}$. In this section, we give a (spectral) distribution function $F_T$ of selfadjoint operator $T$ on $\mathcal{H}$ by 
\begin{align*}
F_T(x):=\varphi_\xi (E_T((-\infty, x])), \qquad x\in\mathbb{R}.
\end{align*}
\begin{proposition} \cite[Lemma 3.3]{VV18}
Let $X\ge0$, $Y\ge0$ be Boolean independent random variables on $(\mathcal{H},\xi)$. Then we have
\begin{align*}
F_{X\lor Y}=\frac{F_XF_Y}{F_X+F_Y-F_XF_Y}=:F_X {\cup \hspace{-.67em}\lor}F_Y.
\end{align*}
\end{proposition}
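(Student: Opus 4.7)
My plan is to reduce the statement, via the spectral-order definition of $X\lor Y$, to a question about Boolean independent projections, and then to evaluate the vector state on $P\lor Q$ by working inside the Boolean product realization of $(\mathcal{H},\xi)$. Set $P:=E_X((x,\infty))$, $Q:=E_Y((x,\infty))$, $p:=\varphi_\xi(P)=1-F_X(x)$, and $q:=\varphi_\xi(Q)=1-F_Y(x)$. Since $E_{X\lor Y}((x,\infty))=P\lor Q$ by the definition of the spectral maximum, we have $F_{X\lor Y}(x)=1-\varphi_\xi(P\lor Q)$. Because $X,Y\ge0$ are Boolean independent, the bounded Borel functional calculus places $P$ in the non-unital subalgebra generated by $X$ and $Q$ in the one generated by $Y$, and these are Boolean independent. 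Hence the whole claim will follow from
\begin{align*}
\varphi_\xi(P\lor Q)=\frac{p+q-2pq}{1-pq},
\end{align*}
since then $F_{X\lor Y}(x)=\frac{(1-p)(1-q)}{1-pq}=\frac{F_XF_Y}{F_X+F_Y-F_XF_Y}$ by the elementary identities $(1-p)(1-q)=F_XF_Y$ and $1-pq=F_X+F_Y-F_XF_Y$.

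To compute $\varphi_\xi(P\lor Q)$ I will use the standard Boolean product realization $\mathcal{H}=\mathbb{C}\xi\oplus\mathcal{H}_1^\circ\oplus\mathcal{H}_2^\circ$, in which $P$ leaves $\mathcal{H}_1:=\mathbb{C}\xi\oplus\mathcal{H}_1^\circ$ invariant and annihilates $\mathcal{H}_2^\circ$, and symmetrically for $Q$. Setting $u_P:=P\xi/\sqrt{p}$ and $u_Q:=Q\xi/\sqrt{q}$, Boolean independence gives the single nontrivial inner product $\langle u_P,u_Q\rangle=\varphi_\xi(PQ)/\sqrt{pq}=\sqrt{pq}$. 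The decisive observation is that the orthogonal projection $\pi\xi$ of $\xi$ onto $\overline{P\mathcal{H}+Q\mathcal{H}}$ already lies in the two-dimensional subspace $\mathrm{span}(u_P,u_Q)$: for any vector $b$, the $\mathcal{H}_2^\circ$-component of $Qb$ is annihilated by $P$, so $PQb\in\mathbb{C}\,P\xi$, and symmetrically $QPa\in\mathbb{C}\,Q\xi$. Writing $\pi\xi=\alpha u_P+\beta u_Q$ and imposing $P(\xi-\pi\xi)=0$ and $Q(\xi-\pi\xi)=0$ produces the $2\times2$ system $\alpha+\beta\sqrt{pq}=\sqrt{p}$, $\alpha\sqrt{pq}+\beta=\sqrt{q}$. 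Solving and computing $\|\alpha u_P+\beta u_Q\|^2$, the numerator factors as $(1-pq)(p+q-2pq)$, and we obtain $\varphi_\xi(P\lor Q)=(p+q-2pq)/(1-pq)$ as required.

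The main obstacle is this two-dimensional reduction: every other step is either structural (the passage from the spectral maximum to the lattice operation on projections via $E_{X\lor Y}((x,\infty))=E_X((x,\infty))\lor E_Y((x,\infty))$) or purely algebraic. The reduction is precisely where Boolean independence is genuinely exploited, through the facts that $P$ kills $\mathcal{H}_2^\circ$ and $Q$ kills $\mathcal{H}_1^\circ$ in the Boolean product; this rank-one-through-$\xi$ interaction between $P\mathcal{H}$ and $Q\mathcal{H}$ forces $\pi\xi$ into $\mathrm{span}(P\xi,Q\xi)$ and makes the entire computation tractable. A minor preliminary point, worth a line, is to justify that the spectral projections $E_X((x,\infty))$ and $E_Y((x,\infty))$ do lie in the Boolean independent (non-unital) subalgebras; for $x\ge 0$ and $X,Y\ge 0$ this follows by approximating $\mathbf{1}_{(x,\infty)}$ by functions of the form $X^n(t+X^n)^{-1}$ and passing to the strong operator limit.
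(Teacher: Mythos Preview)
The paper does not give its own proof of this proposition; it is quoted verbatim from \cite[Lemma~3.3]{VV18} and left unproved. So there is no in-paper argument to compare yours against, and I can only assess your proof on its own merits.

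Your argument is correct. The reduction via $E_{X\lor Y}((x,\infty))=E_X((x,\infty))\lor E_Y((x,\infty))$ is exactly the right first move, and the key computation of $\varphi_\xi(P\lor Q)$ in the Boolean product model is sound: once one checks $Pu_Q=\sqrt{pq}\,u_P$ and $Qu_P=\sqrt{pq}\,u_Q$ (which is the real content of your ``$PQb\in\mathbb{C}P\xi$'' remark), the ansatz $\eta=\alpha u_P+\beta u_Q$ can be solved so that $P(\xi-\eta)=Q(\xi-\eta)=0$, and then $\eta\in P\mathcal{H}+Q\mathcal{H}$ together with uniqueness of orthogonal projection forces $\eta=\pi\xi$. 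The algebra leading to $\varphi_\xi(P\lor Q)=(p+q-2pq)/(1-pq)$ and then to $F_{X\lor Y}=F_XF_Y/(F_X+F_Y-F_XF_Y)$ is correct.

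Two small points of polish. First, the degenerate cases $p=0$, $q=0$ (where $u_P$ or $u_Q$ is undefined) and $p=q=1$ (where the $2\times2$ system is singular) should be handled separately; all are trivial, and the convention in the paper that $F_X{\cup\hspace{-.67em}\lor}F_Y(x)=0$ when $F_X(x)=0$ or $F_Y(x)=0$ covers the last one. Second, the specific approximants $X^n(t+X^n)^{-1}$ do not converge to $\mathbf{1}_{(x,\infty)}(X)$ for a general fixed $x>0$; it is cleaner to note that for $x\ge 0$ the function $\mathbf{1}_{(x,\infty)}$ vanishes at $0$, hence lies in the strong closure of continuous functions vanishing at $0$, which places $E_X((x,\infty))$ in the (weakly closed) non-unital algebra generated by $X$.
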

We understand $F_X {\cup \hspace{-.67em}\lor}F_Y(x)=0$ when $x\ge0$ satisfies $F_X(x)=0$ or $F_Y(x)=0$. We write $\mu{\cup \hspace{-.67em}\lor}\nu$ as the distribution of the maximum of Boolean independent positive random variables $X\sim\mu\in\mathcal{P}_+$ and $Y\sim\nu\in\mathcal{P}_+$, that is,
\begin{align*}
\mu{\cup \hspace{-.67em}\lor}\nu([0,\cdot]):=\mu([0,\cdot]){\cup \hspace{-.67em}\lor}\nu([0,\cdot]).
\end{align*}
The operation ${\cup \hspace{-.67em}\lor}$ is called the {\it Boolean max-convolution}. For $n\in\mathbb{N}$ and $\mu\in\mathcal{P}_+$, we define $\mu^{\cup \hspace{-.52em} \lor n}:=\overbrace{\mu{\cup \hspace{-.67em} \lor} \cdots {\cup \hspace{-.67em} \lor} \mu}^{n \text{ times}}$. More generally, for $t>0$, we define
\begin{align}\label{eq:Boolean-max}
\mu^{\cup \hspace{-.52em} \lor t} ([0,\cdot]):=\frac{\mu([0,\cdot])}{t-(t-1)\mu([0,\cdot])}.
\end{align}

By definition \eqref{eq:Boolean-max}, we get the following lemma.
\begin{lemma}\label{lem:Boolean-max-atom}
Consider $t>0$ and $\mu\in\mathcal{P}_+$. Then $\mu$ has an atom $0$ if and only if $\mu^{\cup \hspace{-.52em} \lor t}$ also has an atom $0$. In the case, we have
\begin{align*}
\mu^{\cup \hspace{-.52em} \lor t}(\{0\})=\frac{\mu(\{0\})}{t-(t-1)\mu(\{0\})}.
\end{align*}
\end{lemma}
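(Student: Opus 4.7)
The proof is essentially a one-line computation followed by a sign check, so the plan is short.

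First I would simply specialize the defining formula \eqref{eq:Boolean-max} at the point $0$. Since $\mu([0,0]) = \mu(\{0\})$ and $\mu^{\cup \hspace{-.52em} \lor t}([0,0]) = \mu^{\cup \hspace{-.52em} \lor t}(\{0\})$, plugging $\cdot = 0$ into
\begin{align*}
\mu^{\cup \hspace{-.52em} \lor t}([0,\cdot]) = \frac{\mu([0,\cdot])}{t - (t-1)\mu([0,\cdot])}
\end{align*}
immediately yields the claimed formula
\begin{align*}
\mu^{\cup \hspace{-.52em} \lor t}(\{0\}) = \frac{\mu(\{0\})}{t - (t-1)\mu(\{0\})}.
\end{align*}

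Second, I would justify that this expression is well defined by checking that the denominator is strictly positive. Rewriting $t-(t-1)\mu(\{0\}) = t(1-\mu(\{0\})) + \mu(\{0\})$ exhibits it as a convex combination of $t>0$ and $1$, hence strictly positive for every $\mu(\{0\}) \in [0,1]$ and every $t>0$.

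With the denominator positive, the equivalence $\mu(\{0\}) > 0 \iff \mu^{\cup \hspace{-.52em} \lor t}(\{0\}) > 0$ is immediate from the fraction, so both directions of the biconditional follow at once. There is no real obstacle here; the only subtlety worth flagging is the sign check on the denominator, which is needed because $t-1$ can be negative when $t<1$, but the rewrite above resolves that instantly.
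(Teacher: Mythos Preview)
Your proposal is correct and matches the paper's own treatment: the paper simply states that the lemma follows from definition \eqref{eq:Boolean-max} without further argument, and your specialization at $0$ together with the denominator sign check is exactly what that entails. The rewrite $t-(t-1)\mu(\{0\}) = t(1-\mu(\{0\})) + \mu(\{0\})$ is a clean way to handle the $t<1$ case and, if anything, adds a little more detail than the paper provides.
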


A non-trivial distribution function $F$ on $[0,\infty)$ is said to be {\it Boolean max-stable} if for any $n\in\mathbb{N}$, there exists $a_n>0$ such that
\begin{align*}
F^{\cup \hspace{-.52em} \lor n} (a_n \cdot) \xrightarrow{w} F(\cdot), \qquad n\rightarrow\infty.
\end{align*}
\begin{proposition}\cite[Theorem 6.8]{VV18}
$F$ is Boolean max-stable if and only if there exist $a>0$ and $b\in\mathbb{R}$ such that $F(ax+b)$ is the following distribution:
\begin{align*}
B_{\text{II},\alpha}(x):=(1+x^{-\alpha})^{-1}
\end{align*}
for some $\alpha>0$. This is called the {\it Dagum distribution} or the {\it (type II) Boolean extreme value distribution}.
\end{proposition}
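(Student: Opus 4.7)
The plan is to linearize the Boolean max-convolution. From \eqref{eq:Boolean-max}, the map $F \mapsto H_F := 1/F - 1$ (non-increasing, non-negative, vanishing at $+\infty$, equal to $+\infty$ below the left endpoint of $F$) satisfies the clean identity $H_{F^{\cup \hspace{-.52em} \lor t}}(x) = t\, H_F(x)$ for all $t > 0$, because $1/F^{\cup \hspace{-.52em} \lor t} - 1 = t/F - t = t(1/F - 1)$. Consequently, Boolean max-stability $F^{\cup \hspace{-.52em} \lor n}(a_n \cdot) \xrightarrow{w} F(\cdot)$ becomes the much more tractable functional relation
\[
n\, H_F(a_n x) \longrightarrow H_F(x)
\]
at every continuity point $x$ with $H_F(x) \in (0,\infty)$.

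I would first dispatch the ``if'' direction by direct computation: for $F = B_{\text{II},\alpha}$ one has $H_F(x) = x^{-\alpha}$, so the choice $a_n = n^{1/\alpha}$ gives $n H_F(a_n x) = x^{-\alpha} = H_F(x)$ identically. Affine changes of variable preserve Boolean max-stability (they only relabel the $a_n$ and translate the support), so the entire stated family is covered.

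For the ``only if'' direction, starting from $n\, H_F(a_n x) \to H_F(x)$, I would apply the classical convergence-to-types and regular variation machinery familiar from the Fisher--Tippett--Gnedenko theorem. Evaluating the limit at two continuity points $s,t$ in the interior of the support yields $H_F(a_n s)/H_F(a_n t) \to H_F(s)/H_F(t)$; monotonicity of $H_F$ combined with non-triviality of $F$ should force $a_n \to \infty$ and $a_n/a_{n+1} \to 1$. Together these will imply that $H_F$ is regularly varying at $+\infty$ of some strictly negative index $-\alpha$. Normalizing $a_n$ so that $n H_F(a_n) \to H_F(1)$ and re-injecting into the limit relation, one is then forced to
\[
H_F(x) = H_F(1)\, x^{-\alpha}
\]
throughout the interior of the support. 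After translating the left endpoint to $0$ and absorbing the constant $H_F(1)$ into an affine rescaling $x \mapsto (x-b)/a$, this reads $F(ax+b) = (1+x^{-\alpha})^{-1} = B_{\text{II},\alpha}(x)$.

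The main obstacle will be the convergence-to-types step: I expect to need to exclude degenerate normalizations (e.g.\ those that would make $H_F$ slowly varying and thereby force $F$ to be trivial) and rule out $\alpha = 0$ or $\alpha = \infty$. Some care will also be needed around the boundary of the support, where $H_F$ is either $+\infty$ or $0$; the preliminary translation placing the left endpoint at $0$ should reduce the analysis cleanly to $(0,\infty)$.
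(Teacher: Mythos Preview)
The paper does not supply a proof of this proposition: it is quoted verbatim as \cite[Theorem 6.8]{VV18} and used as a black box. There is therefore no in-paper argument to compare your proposal against.

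That said, your approach is sound and is essentially a direct unwinding of the proof one expects in \cite{VV18}. The key observation there is that the bijection $\mathcal{X}^\lor$ from Section~2.3.3 converts Boolean max-convolution into classical max-convolution, so $F$ is Boolean max-stable if and only if $\mathcal{X}^\lor(F)$ is classically max-stable (with the same scaling constants $a_n$), and one then invokes Fisher--Tippett--Gnedenko. Your linearization $H_F = 1/F - 1$ is exactly $-\log \mathcal{X}^\lor(F)$, so the regular-variation argument you outline is precisely the classical Fr\'echet analysis, seen through this transform. One small point to tighten: the definition of Boolean max-stability in the paper (and in \cite{VV18}) allows only scaling normalizations $a_n>0$, not translations, because the convolution is defined only on $\mathcal{P}_+$; this is why only the type~II (Fr\'echet/Dagum) family appears, and your remarks about ``translating the left endpoint to $0$'' and affine changes relabeling $b_n$ are unnecessary --- the support is already in $[0,\infty)$ and no shift parameter enters the stability definition.
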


\begin{problem}
We do not know operator models which realize the maximum of Boolean independent general selfadjoint operators. Can we obtain such operator models? Moreover, we should find other type Boolean extreme values.
\end{problem}

Define a function $\mathcal{X}^{\lor}$ on $[0,1]$ by setting $\mathcal{X}^{\lor}(0):=0$ and $\mathcal{X}^{\lor}(x):=\exp(1-x^{-1})$ for $x\in(0,1]$. If $F$ is a distribution function on $\mathbb{R}$, so is $\mathcal{X}^{\lor}(F)$. Note that $\mathcal{X}^{\lor}$ maps the Boolean extreme values to the classical extreme values.

For a probability measure $\mu$ on $\mathbb{R}$, we define the probability measure $\mathcal{X}^{\lor}(\mu)$ such that 
\begin{align*}
\mathcal{X}^{\lor}(\mu)((-\infty, \cdot]):=\mathcal{X}^{\lor} (\mu((-\infty,\cdot])).
\end{align*}
The operator $\mathcal{X}^{\lor}$ is a homomorphism from $(\mathcal{P}_+,{\cup \hspace{-.67em}\lor})$ to $(\mathcal{P}_+,\lor)$, that is,
\begin{align*}
\mathcal{X}^{\lor}(\mu {\cup \hspace{-.67em}\lor} \nu)=\mathcal{X}^{\lor}(\mu)\lor \mathcal{X}^{\lor}(\nu),
\end{align*}
for all probability measures $\mu$ and $\nu$ on $[0,\infty)$. Moreover, it is clear that the function $\mathcal{X}^\lor$ is a bijection and its inverse function is given by 
\begin{align*}
(\mathcal{X}^{\lor})^{-1}(x)&=\frac{1}{1-\log x}, \qquad  x\in(0,1]\\
(\mathcal{X}^{\lor})^{-1}(0)&=0.
\end{align*}
The operator $\mathcal{X}^{\lor}$ is called the {\it Boolean-classical max-Bercovici-Pata bijection} (see \cite{VV18, U19}). 


\subsubsection{Max-Belinschi-Nica semigroup}

We firstly give the {\it Belinschi-Nica semigroup} $\{B_t\}_{t\ge0}$ introduced by \cite{BN08}:
\begin{align*}
B_t(\mu):=(\mu^{\boxplus (1+t)})^{\uplus \frac{1}{1+t}}, \qquad \mu\in\mathcal{P}_+, \hspace{2mm} t\ge0.
\end{align*}
In \cite{BN08}, we have $B_t\circ B_s=B_{t+s}$ for all $t,s\ge0$ and $B_1(\mu\uplus\nu)=B_1(\mu)\boxplus B_1(\nu)$ for all $\mu,\nu\in\mathcal{P}_+$. Moreover, $B_t$ is a homomorphism with respect to $\boxtimes$, that is, $B_t(\mu\boxtimes\nu)=B_t(\mu)\boxtimes B_t(\nu)$ for all $t\ge0$ and $\mu,\nu\in\mathcal{P}_+$. Furthermore, $B_1$ connects Boolean type limit theorem to free type one. In \cite{BP99}, for a sequence $\{\mu_n\}_n$ in $\mathcal{P}$ and $\{k_n\}_n$ in $\mathbb{N}$ with $k_1<k_2<\cdots$ and $k_n\rightarrow\infty$ as $n\rightarrow\infty$, there exists $\mu\in\mathcal{P}$ such that $\mu_n^{\uplus k_n} \xrightarrow{w} \mu$ if and only if there exists a unique $\nu\in\mathcal{P}$ (which is freely infinitely divisible, for short, FID) such that $\mu_n^{\boxplus k_n}\xrightarrow{w} \nu$ as $n\rightarrow\infty$. In \cite{BN08}, we get $\nu=B_1(\mu)$ under the above settings. Note that $B_1$ is a bijection from $\mathcal{P}$ to the set of all FID distributions on $\mathbb{R}$. For example, $B_1$ maps the Boolean stable laws to the corresponding freely stable laws.

Next, we give one parameter family $\{B_t^\lor\}_{t\ge0}$ of operators on $\mathcal{P}_+$ defined by
\begin{align*}
B_t^\lor(\mu):=(\mu^{\Box \hspace{-.55em} \lor (1+t)})^{\cup \hspace{-.52em} \lor \frac{1}{1+t}}, \qquad \mu\in\mathcal{P}_+, \hspace{2mm} t\ge 0,
\end{align*}
where the family was introduced by \cite{U19}. We know that $B_t^\lor\circ B_s^\lor=B_{t+s}^\lor$ for all $t,s\ge0$. The family $\{B_t^\lor\}_{t\ge0}$ is said the {\it max-Belinschi-Nica semigroup}. The semigroup is very similar to the original Belinschi-Nica semigroup. For example, $B_1^\lor$ is a homomorphism from $(\mathcal{P}_+,{\cup \hspace{-.67em}\lor})$ to $(\mathcal{P}_+,{\Box \hspace{-.75em}\lor})$, that is,
\begin{align*}
B_1^\lor(\mu{\cup \hspace{-.67em}\lor}\nu)=B_1^\lor(\mu) {\Box \hspace{-.72em} \lor}B_1^\lor(\nu),
\end{align*}
for all $\mu,\nu\in\mathcal{P}_+$. Moreover, for a sequence $\{\mu_n\}_n$ in $\mathcal{P}_+$ and $\{k_n\}_n$ in $\mathbb{N}$ with $k_1<k_2<\cdots$ and $k_n\rightarrow\infty$ as $n\rightarrow\infty$, if there exists $\mu\in\mathcal{P}_+$ such that $\mu_n^{{\cup \hspace{-.52em}\lor} k_n} \xrightarrow{w} \mu$, then $\mu_n^{{\Box \hspace{-.55em} \lor}k_n}\xrightarrow{w} B_t^\lor(\mu)$ as $n\rightarrow\infty$. In particular, we have $B_1^\lor(B_{\text{II},\alpha})=F_{\text{II},\alpha}$ for all $\alpha>0$ (see \cite{U19}). In addition, we have the following relation.

\begin{lemma}\label{lem:B1lx}\cite{U19}
We have $B_1^\lor=\Lambda^\lor\circ\mathcal{X}^\lor$.
\end{lemma}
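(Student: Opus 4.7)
The plan is to verify the identity on the level of distribution functions by a direct computation, keeping careful track of where the cumulative distribution of $\mu$ is small (where the $\max$ with $0$ kicks in on both sides).

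Fix $\mu\in\mathcal{P}_+$ and set $F(x):=\mu([0,x])$. I would compute the two sides separately as functions of $F(x)$. For the right-hand side, the definition of $\mathcal{X}^\lor$ gives $\mathcal{X}^\lor(\mu)([0,x])=\exp(1-F(x)^{-1})$ when $F(x)>0$, and $0$ otherwise. Applying the definition of $\Lambda^\lor$ then yields
\begin{align*}
(\Lambda^\lor\circ\mathcal{X}^\lor)(\mu)([0,x])=\max\{0,\,1+\log\exp(1-F(x)^{-1})\}=\max\{0,\,2-F(x)^{-1}\},
\end{align*}
with the convention that the value is $0$ when $F(x)=0$. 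Note that $2-F(x)^{-1}>0$ exactly when $F(x)>1/2$.

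For the left-hand side I would use the defining formulas \eqref{eq:free-max} and \eqref{eq:Boolean-max} in succession. First, $\mu^{\Box \hspace{-.55em} \lor 2}([0,x])=\max\{0,\,2F(x)-1\}$. Setting $G(x):=\mu^{\Box \hspace{-.55em} \lor 2}([0,x])$ and applying the $\cup \hspace{-.52em} \lor$-power with parameter $t=1/2$ gives
\begin{align*}
B_1^\lor(\mu)([0,x])=\frac{G(x)}{\tfrac{1}{2}-(-\tfrac{1}{2})G(x)}=\frac{2G(x)}{1+G(x)}.
\end{align*}
When $F(x)\le 1/2$ we have $G(x)=0$ and this is $0$; when $F(x)>1/2$ we have $G(x)=2F(x)-1$ and the expression reduces to $(2(2F(x)-1))/(2F(x))=2-F(x)^{-1}$. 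Hence $B_1^\lor(\mu)([0,x])=\max\{0,\,2-F(x)^{-1}\}$, matching the right-hand side identically.

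The only subtlety is that the $\max\{0,\cdot\}$ clipping on the left (coming from the $\Box\hspace{-.55em}\lor$-power) must match the one on the right (coming from $\Lambda^\lor$); the case split $F(x)\lessgtr 1/2$ above shows that they agree on both regions, and they trivially agree at $F(x)=0$. Since both expressions depend only on $F(x)$ and coincide for every $x$, the two distribution functions on $[0,\infty)$ are equal, so $B_1^\lor(\mu)=(\Lambda^\lor\circ\mathcal{X}^\lor)(\mu)$. As this holds for arbitrary $\mu\in\mathcal{P}_+$, the operator identity $B_1^\lor=\Lambda^\lor\circ\mathcal{X}^\lor$ follows. There is no serious obstacle; this is essentially an algebraic simplification of $2s/(1+s)$ with $s=2F-1$.
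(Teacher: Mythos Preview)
Your computation is correct. The paper does not give its own proof of this lemma; it is simply quoted from \cite{U19}. Your direct verification on the level of distribution functions---computing both sides as functions of $F(x)=\mu([0,x])$ and checking that each equals $\max\{0,\,2-F(x)^{-1}\}$---is precisely the natural argument, and the case split at $F(x)=1/2$ (and at $F(x)=0$) is handled properly.
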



\section{Proof of main theorems}


\subsection{Proof of Theorem \ref{thm:free_additive_max}}

In this section, we firstly show that the operator $\Phi$ has a relation between free additive convolution and free max-convolution.

\begin{proof}[Proof of Theorem \ref{thm:free_additive_max}]
We may assume that $t>1$. If $\mu$ is a Dirac measure, then $\Phi(D_{1/t}(\mu^{\boxplus t}))=\mu=\Phi(\mu)^{\Box \hspace{-.55em} \lor t}$. Therefore we may assume that $\mu$ is not a Dirac measure. Note that the closure of interval $(\alpha_t,\omega)$ is the support of $\Phi(\mu)^{\Box \hspace{-.55em} \lor t}$, where 
\begin{align*}
\alpha_t:=\inf \left\{x: \Phi(\mu)([0,x])>1-\frac{1}{t}\right\}, \qquad \omega:=\sup\{x:\Phi(\mu)([0,x])<1\},
\end{align*}
for each $t>1$. Define $M_t:=\{y:\Phi(\mu)^{\Box \hspace{-.55em}\lor t}([0,y])\in(\mu^{\Box \hspace{-.55em}\lor t}(\{0\}),1)\}$. By Remark \ref{rem:support}, the closure of $M_t$ is also the support of $\Phi(\mu)^{\Box \hspace{-.55em} \lor t}$. Moreover, we have $M_t\subseteq (\alpha_t,\omega)$.

Next, we define
\begin{align*}
A_t:&=\{y: \Phi(D_{1/t}(\mu^{\boxplus t}))([0,y])\in(\mu^{\boxplus t}(\{0\}),1)\}\\
&=\left(\frac{a_{\mu^{\boxplus t}}}{t}, \frac{b_{\mu^{\boxplus t}}}{t}\right), \qquad t>1,
\end{align*}
where $a_{\mu^{\boxplus t}}$ and $b_{\mu^{\boxplus t}}$ were defined by \eqref{ab}, and the last equality holds by Lemma \ref{lem:pS}. By Remark \ref{rem:support}, the closure of $A_t$ is the support of $\Phi(D_{1/t}(\mu^{\boxplus t}))$ for each $t>1$.

We show that $A_t=M_t$ and 
\begin{align*}
\Phi(D_{1/t}(\mu^{\boxplus t}))([0,x])=t\Phi(\mu)([0,x])-(t-1),
\end{align*}
for all $x\in A_t=M_t$ and each $t>1$. For an arbitrary fixed $t>1$, we divided two cases to prove it.\\
\vspace{-3mm}\\
{\bf Case I}\hspace{2mm}$0\le \mu(\{0\}) \le 1-t^{-1}$: By Lemma \ref{lem:atom} and Lemma \ref{lem:free-max-atom}, we have 
\begin{align*}
\mu^{\boxplus t}(\{0\})=\mu^{\Box \hspace{-.55em} \lor t}(\{0\})=0. 
\end{align*}
For all $x\in M_t$, we get 
\begin{align*}
t\Phi(\mu)([0,x])-(t-1)\in (\mu^{\Box \hspace{-.55em} \lor t}(\{0\}),1 )=(0,1)=(\mu^{\boxplus t}(\{0\}),1).
\end{align*}
Since $M_t\subseteq (\alpha_t,\omega)\subseteq (a_\mu,b_\mu)$, we have 
\begin{align*}
x=\frac{1}{S_\mu(\Phi(\mu)([0,x])-1)},
\end{align*}
by Lemma \ref{lem:pS}. Therefore Lemma \ref{lem:Sfb} and Proposition \ref{prop:HMlimit} imply that
\begin{align*}
\Phi(\mu^{\boxplus t})([0,tx])&=\Phi(\mu^{\boxplus t}) \left( \left[0, \frac{t}{S_\mu(\Phi(\mu)([0,x])-1)}\right]\right)\\
&=\Phi(\mu^{\boxplus t})\left( \left[0, \frac{1}{S_{\mu^{\boxplus t}}\left(t(\Phi(\mu)([0,x])-1)\right)} \right]\right)\\
&=\Phi(\mu^{\boxplus t})\left( \left[0, \frac{1}{S_{\mu^{\boxplus t}}\left(\{t\Phi(\mu)([0,x])-(t-1)\}-1\right)} \right]\right)\\
&=t\Phi(\mu)([0,x])-(t-1).
\end{align*}
By Lemma \ref{lem:dP}, we have 
\begin{align*}
\Phi(D_{1/t}(\mu^{\boxplus t}))([0,x])&=D_{1/t}\circ\Phi(\mu^{\boxplus t})([0,x])\\
&=\Phi(\mu^{\boxplus t})([0,tx])\\
&=t\Phi(\mu)([0,x])-(t-1) \in (0,1),
\end{align*}
and therefore we have $x\in A_t$. Hence we have $M_t\subseteq A_t$.

Next we show that for all $x\in (a_{\mu^{\boxplus t}}, b_{\mu^{\boxplus t}})$, we have $x/t\in (\alpha_t,\omega)$. For any $x\in (a_{\mu^{\boxplus t}}, b_{\mu^{\boxplus t}})$, we have
\begin{align*}
\Phi(\mu^{\boxplus t})([0,x])=S^{-1}_{\mu^{\boxplus t}}\left(\frac{1}{x}\right)+1
\end{align*}
by Lemma \ref{lem:pS}. Since $S_{\mu^{\boxplus t}}(S_{\mu^{\boxplus t}}^{-1}(1/x))=1/x$, we have
\begin{align*}
S_\mu\left(\frac{1}{t}S^{-1}_{\mu^{\boxplus t}}\left(\frac{1}{x}\right)\right)=\frac{t}{x},
\end{align*}
by Lemma \ref{lem:Sfb}. Therefore we obtain
\begin{align*}
S_{\mu^{\boxplus t}}^{-1}\left(\frac{1}{x}\right)=tS_\mu^{-1}\left(\frac{t}{x}\right),
\end{align*}
by Lemma \ref{lem:image}. Thus, we get $\Phi(\mu^{\boxplus t})([0,x])=tS_{\mu}^{-1}(t/x)+1$, and therefore
\begin{align*}
x=\frac{t}{S_\mu\left(\frac{1}{t}\Phi(\mu^{\boxplus t})([0,x])-\frac{1}{t}+1-1 \right)}.
\end{align*}
Since
\begin{align*}
\frac{1}{t}\Phi(\mu^{\boxplus t})([0,x])-\frac{1}{t}+1\in \left(1-\frac{1}{t},1 \right)\subseteq (\mu(\{0\}),1),
\end{align*}
we have 
\begin{align*}
\Phi(\mu)([0,x/t])&=\Phi(\mu)\left( \left[0,\frac{1}{ S_\mu\left(\frac{1}{t}\Phi(\mu^{\boxplus t})([0,x])-\frac{1}{t}+1-1 \right)}\right]\right)\\
&=\frac{1}{t}\Phi(\mu^{\boxplus t})([0,x])-\frac{1}{t}+1\in \left( 1-\frac{1}{t},1\right),
\end{align*}
by Proposition \ref{prop:HMlimit}. This implies that $x/t\in (\alpha_t,\omega)$. Therefore we have 
\begin{align*}
A_t=\left( \frac{a_{\mu^{\boxplus t}}}{t},\frac{b_{\mu^{\boxplus t}}}{t}\right)\subseteq (\alpha_t,\omega)=M_t.
\end{align*}
Finally, we get the following properties:
\begin{align*}
\Phi(D_{1/t}(\mu^{\boxplus t}))([0,x])&=t\Phi(\mu)([0,x])-(t-1) 
\end{align*}
for all $x\in A_t=M_t$. Moreover, the support of $\Phi(D_{1/t}(\mu^{\boxplus t}))$ is equal to the support of $\Phi(\mu)^{\Box \hspace{-.55em} \lor t}$ since $A_t=M_t$. Therefore $\Phi(D_{1/t}(\mu^{\boxplus t}))=\Phi(\mu)^{\Box \hspace{-.55em} \lor t}$.\\
\vspace{-2mm}\\
{\bf Case II} $\mu(\{0\})>1-t^{-1}$:  By Lemma \ref{lem:atom}, we have 
\begin{align*}
\mu^{\boxplus t}(\{0\})=t\mu(\{0\})-(t-1)>0,
\end{align*}
and therefore $a_{\mu^{\boxplus t}}=0$. By Lemma \ref{lem:free-max-atom}, we also have
\begin{align*}
\mu^{\Box \hspace{-.55em} \lor t}(\{0\})=t\mu(\{0\})-(t-1)>0,
\end{align*}
and therefore $\alpha_t=0$. For any $x\in M_t=(0,\omega)$, we get 
\begin{align*}
t\Phi(\mu)([0,x])-(t-1)\in (\mu^{\Box \hspace{-.55em} \lor t}(\{0\}),1)=(\mu^{\boxplus t}(\{0\}),1).
\end{align*}
Since $M_t=(0,\omega)\subseteq (0,b_\mu)=(a_\mu,b_\mu)$, we get
\begin{align*}
\Phi(D_{1/t}(\mu^{\boxplus t}))([0,x])=t\Phi(\mu)([0,x])-(t-1)\in(\mu^{\boxplus t}(\{0\}),1),
\end{align*} 
by the same way in Case I. Hence we have $x\in A_t$. Therefore $M_t\subseteq A_t$.

Next for $x\in(a_{\mu^{\boxplus t}},b_{\mu^{\boxplus t}})=(0,b_{\mu^{\boxplus t}})$, we show that $x/t\in (\alpha_t,\omega)=(0,\omega)$. Since 
\begin{align*}
\frac{1}{t}\Phi(\mu^{\boxplus t})([0,x])-\frac{1}{t}+1 &\in \left( \frac{1}{t}\mu^{\boxplus t}(\{0\})-\frac{1}{t}+1,1\right)\\
&=\left(\frac{1}{t}(t\mu(\{0\})-(t-1))-\frac{1}{t}+1,1 \right)\\
&=(\mu(\{0\}),1),
\end{align*}
we have 
\begin{align*}
\Phi(\mu)\left( \left[0,\frac{x}{t} \right]\right)=\frac{1}{t}\Phi(\mu^{\boxplus t})([0,x])-\frac{1}{t}+1 &\in (\mu(\{0\}),1)\subseteq \left( 1-\frac{1}{t},1\right).
\end{align*}
by the same way in Case I, where the last inclusion holds since $\mu(\{0\})>1-t^{-1}$. Therefore $x/t\in (0,\omega)$, and hence 
\begin{align*}
A_t=\left(0,\frac{b_{\mu^{\boxplus t}}}{t}\right)\subseteq (0,\omega)=M_t.
\end{align*}
Finally, we get the following properties:
\begin{align*}
\Phi(D_{1/t}(\mu^{\boxplus t}))([0,x])&=t\Phi(\mu)([0,x])-(t-1) 
\end{align*}
for all $x\in A_t=M_t$. Moreover, the support of $\Phi(D_{1/t}(\mu^{\boxplus t}))$ is equal to the support of $\Phi(\mu)^{\Box \hspace{-.55em} \lor t}$ since $A_t=M_t$. Therefore $\Phi(D_{1/t}(\mu^{\boxplus t}))=\Phi(\mu)^{\Box \hspace{-.55em} \lor t}$.
\end{proof}


\subsection{Proof of Theorem \ref{thm:Boolean_additive_max}}
In this section, we show Theorem \ref{thm:Boolean_additive_max} as follows.
\begin{proof}[Proof of Theorem \ref{thm:Boolean_additive_max}]  
If $\mu$ is a Dirac measure, then $\Phi(D_{1/t}(\mu^{\uplus t}))=\mu=\Phi(\mu)^{\cup \hspace{-.52em} \lor t}$.  Therefore we may assume that $\mu$ is not a Dirac measure. By Corollary \ref{cor:Boolean}, the eqaution \eqref{eq:Boolean_add_atom} and Lemma \ref{lem:Boolean-max-atom}, if $\mu(\{0\})=0$ then $\mu^{\uplus t}(\{0\})=\mu^{\cup \hspace{-.52em} \lor t}(\{0\})=0$ and if $\mu(\{0\})\neq 0$, then
\begin{align*}
\mu^{\uplus t}(\{0\})=\mu^{\cup \hspace{-.52em} \lor t}(\{0\})=\frac{\mu(\{0\})}{t-(t-1)\mu(\{0\})}.
\end{align*}  
Define $M_t:=\{x:\Phi(\mu)^{\cup \hspace{-.52em}\lor t}([0,x])\in(\mu^{\cup \hspace{-.52em} \lor t}(\{0\}),1)\}$. For all $y\in M_t$, we have
\begin{align*}
\Phi(\mu)^{\cup \hspace{-.52em}\lor t}([0,y])\in (\mu^{\cup \hspace{-.52em} \lor t}(\{0\}),1)=\left( \frac{\mu(\{0\})}{t-(t-1)\mu(\{0\})},1\right).
\end{align*}
Then 
\begin{align*}
\Phi(\mu)([0,y])=\frac{t\Phi(\mu)^{\cup \hspace{-.52em}\lor t}([0,y])}{1+(t-1)\Phi(\mu)^{\cup \hspace{-.52em}\lor t}([0,y])}&\in\left(\frac{\frac{t\mu(\{0\})}{t-(t-1)\mu(\{0\})}}{1+(t-1)\frac{\mu(\{0\})}{t-(t-1)\mu(\{0\})}},1\right)\\
&=(\mu(\{0\}),1)
\end{align*}
Hence $y\in \{x:\Phi(\mu)([0,x])\in(\mu(\{0\}),1)\}=(a_\mu,b_\mu)$, and therefore $M_t\subseteq (a_\mu,b_\mu)$. It is clear that $(a_\mu,b_\mu)\subseteq M_t$. Finally, we have 
\begin{align*}
M_t=\{x:\Phi(\mu)([0,x])\in(\mu(\{0\}),1)\}=(a_\mu,b_\mu).
\end{align*}

The set $M_t$ does not depend on $t$, and therefore we denote by $M$ as the set $M_t$. By Remark \ref{rem:support}, the closure of $M$ is the support of $\Phi(\mu)^{\cup \hspace{-.52em}\lor t}$ for all $t>0$. Note that the support of $\Phi(\mu)^{\cup \hspace{-.52em}\lor t}$ coincides with the support of $\Phi(\mu)$. 

Next we define $A_t:=\{x: \Phi(D_{1/t}(\mu^{\uplus t}))([0,x])\in (\mu^{\uplus t}(\{0\}),1)\}$. Then we have
\begin{align*}
A_t=\left( \frac{a_{\mu^{\uplus t}}}{t},\frac{b_{\mu^{\uplus t}}}{t}\right).
\end{align*}
By Remark \ref{rem:support}, the closure of $A_t$ is the support of $\Phi(D_{1/t}(\mu^{\uplus t}))$ for each $t>0$. To get this theorem, we show that $A_t=M$ and 
\begin{align*}
\Phi(D_{1/t}(\mu^{\uplus t}))([0,x])=\frac{\Phi(\mu)([0,x])}{t-(t-1)\Phi(\mu)([0,x])}
\end{align*}
for all $x\in A_t=M$ and each $t>0$. Give an arbitrary fixed $t>0$.

For all $x\in M$, we have
\begin{align*}
\frac{\Phi(\mu)([0,x])}{t-(t-1)\Phi(\mu)([0,x])}\in (\mu^{\cup \hspace{-.52em} \lor t}(\{0\}),1)=(\mu^{\uplus t}(\{0\}),1).
\end{align*}
Since $M=(a_\mu,b_\mu)$, by Lemma \ref{lem:pS}, we have
\begin{align*}
x=\frac{1}{S_\mu(\Phi(\mu)([0,x])-1)}.
\end{align*}
Applying Lemmas \ref{lem:Sfb} and Proposition \ref{prop:HMlimit}, we have
\begin{align*}
\Phi(\mu^{\uplus t})([0,tx])&=\Phi(\mu^{\uplus t})\left(\left[0, \frac{t}{S_\mu(\Phi(\mu)([0,x])-1)} \right] \right)\\
&=\Phi(\mu^{\uplus t})\left( \left[0, \frac{1}{S_{\mu^{\uplus t}}(\frac{\Phi(\mu)([0,x])}{t-(t-1)\Phi(\mu)([0,x])}-1) }\right]\right)\\
&=\frac{\Phi(\mu)([0,x])}{t-(t-1)\Phi(\mu)([0,x])}\\
&=\Phi(\mu)^{\cup \hspace{-.52em} \lor t}([0,x]).
\end{align*}
By Lemma \ref{lem:dP}, we have 
\begin{align*}
\Phi(D_{1/t}(\mu^{\uplus t}))([0,x])&=D_{1/t}\circ\Phi(\mu^{\uplus t})([0,x])\\
&=\Phi(\mu^{\uplus t})([0,tx])\\
&=\Phi(\mu)^{\cup \hspace{-.52em} \lor t}([0,x]).
\end{align*}

Next we show that $A_t=M$. By Lemma \ref{lem:image}, we have
\begin{align*}
\left(\frac{1}{b_{\mu^{\uplus t}}},\frac{1}{a_{\mu^{\uplus t}}}\right)&=S_{\mu^{\uplus t}}\left( (\mu^{\uplus t}(\{0\})-1,0)\right)\\
&=S_{\mu^{\uplus t}}\left(\left( \frac{t(\mu(\{0\})-1)}{t-(t-1)\mu(\{0\})},1\right) \right)=\frac{1}{t}\left( \frac{1}{b_\mu},\frac{1}{a_\mu}\right),
\end{align*}
where the last equation holds by Lemma \ref{lem:Sfb}. Therefore $M=(a_\mu,b_\mu)=A_t$. Thus the support of $\Phi(D_{1/t}(\mu^{\uplus t}))$ is equal to the support of $\Phi(\mu)^{\cup \hspace{-.52em} \lor t}$. 
\end{proof}

\begin{remark}
Consider $\mu\in\mathcal{P}_+\setminus\{\delta_0\}$. Since $A_t=(a_\mu,b_\mu)$ in the proof of Theorem \ref{thm:Boolean_additive_max}, we get 
\begin{align*}
t \int_0^\infty x^{-1} d\mu^{\uplus t}(x)&=\int_0^\infty x^{-1}d\mu(x), \qquad t>0.
\end{align*}
\end{remark}


\subsection{Proof of Theorem \ref{thm:intertwiner}}
By using Theorems \ref{thm:free_additive_max} and \ref{thm:Boolean_additive_max}, we know that Belinschi-Nica semigroup is closely intertwined with max-Belinschi-Nica semigroup via the operator $\Phi$.

\begin{proof}[Proof of Theorem \ref{thm:intertwiner}]
If $\mu$ is a Dirac measure, then $\Phi(B_t(\mu))=\mu=B_t^\lor(\Phi(\mu))$. Therefore we may assume that $\mu$ is not a Dirac measure. By Theorems \ref{thm:free_additive_max}, \ref{thm:Boolean_additive_max} and Lemma \ref{lem:dP}, we have
\begin{align*}
\Phi(B_t(\mu))&=D_{\frac{1}{1+t}}\left(\Phi(\mu^{\boxplus (1+t)})^{\cup \hspace{-.52em} \lor \frac{1}{1+t}}\right)\\
&=D_{\frac{1}{1+t}} \left( \left(D_{1+t} \left(\Phi(\mu)^{\Box \hspace{-.55em} \lor (1+t)} \right)  \right)^{\cup \hspace{-.52em} \lor \frac{1}{1+t}}\right)\\
&=D_{\frac{1}{1+t}}\circ D_{1+t} \left(\left( \Phi(\mu)^{\Box \hspace{-.55em} \lor (1+t)}\right)^{\cup \hspace{-.52em} \lor \frac{1}{1+t}} \right) =B_t^\lor(\Phi(\mu)).
\end{align*}
\end{proof}


\subsection{Proof of Theorem \ref{classical_additive_max2}}
In this section, we prove Theorem \ref{classical_additive_max2}. Before a proof of this theorem, we prepare the following important maps. Let $\text{FID}_+$ be the set of all freely infinitely divisible distributions on $[0,\infty)$.  Recall that $B_t(\mu):=(\mu^{\boxplus (1+t)}))^{\uplus \frac{1}{1+t}}\in\text{FID}_+$ for any $\mu\in \mathcal{P}_+$ and $t\ge 1$ (see \cite{BN08}). Hence we can define the operator $\mathcal{X}:\mathcal{P}_+\rightarrow\text{ID}_+$ is defined by
\begin{align*}
\mathcal{X}:=\Lambda^{-1}\circ B_1,
\end{align*}
where $\Lambda$ is called the {\it Bercovici-Pata bijection} and $\Lambda^{-1}(\text{FID}_+)=\text{ID}_+$ (for details, see \cite{BP99, BT02}).

For any $\nu\in\text{FID}_+$ and $t \ge 0$, the measure $\nu^{\uplus (1+t)}$ is also in $\text{FID}_+$ and we obtain $B_t^{-1}(\nu)=(\nu^{\uplus (1+t)})^{\boxplus \frac{1}{1+t}}$. Hence $\mathcal{X}^{-1}=B_1^{-1}\circ \Lambda$, and therefore $\mathcal{X}$ is a bijection from $\mathcal{P}_+$ to $\text{ID}_+$. It is called the {\it Boolean-classical Bercovici-Pata bijection} (see \cite{BP99,BN08}). Note that $\mathcal{X}^{-1}\circ D_c=D_c\circ\mathcal{X}^{-1}$ and $\mathcal{X}^{-1}(\mu^{\ast t})=\mathcal{X}^{-1}(\mu)^{\uplus t}$ for all $\mu\in\text{ID}_+$ and $c,t>0$. 


Finally, we define the operator $\Psi:\text{ID}_+\rightarrow\mathcal{P}_+$ by setting
\begin{align*}
\Psi:=\mathcal{X}^\lor \circ \Phi\circ \mathcal{X}^{-1},
\end{align*}
where $\mathcal{X}^\lor$ was defined in Section 2.3.3.

\begin{proof}[Proof of Theorem \ref{classical_additive_max2}]
Consider $\mu\in\mathcal{P}_+$ and $t>0$. By Theorem \ref{thm:Boolean_additive_max}, we have
\begin{align*}
\Psi(D_{1/t}(\mu^{\ast t}))([0,\cdot])&=\mathcal{X}^{\lor}\circ \Phi\circ\mathcal{X}^{-1}(D_{1/t}(\mu^{\ast t}))([0,\cdot])\\
&=\mathcal{X}^{\lor}\circ\Phi (D_{1/t}(\mathcal{X}^{-1}(\mu)^{\uplus t}))([0,\cdot])\\
&=\mathcal{X}^{\lor}(\Phi(\mathcal{X}^{-1}(\mu))^{\cup \hspace{-.52em} \lor t})([0,\cdot])\\
&=\exp\left[ 1-\frac{1}{\Phi(\mathcal{X}^{-1}(\mu))^{\cup \hspace{-.52em} \lor t}([0,\cdot])}\right]\\
&=\exp\left[ t\left(1- \frac{1}{\Phi(\mathcal{X}^{-1}(\mu))([0,\cdot])}\right)\right].
\end{align*}
On the other hand, we have
\begin{align*}
\Psi(\mu)^{\lor t}([0,\cdot])&=\left(\mathcal{X}^{\lor}\circ \Phi\circ\mathcal{X}^{-1}(\mu)([0,\cdot])\right)^t\\
&=\exp\left[ t\left(1- \frac{1}{\Phi(\mathcal{X}^{-1}(\mu))([0,\cdot])}\right)\right].
\end{align*}
Therefore we have $\Psi(D_{1/t}(\mu^{\ast t}))=\Psi(\mu)^{\lor t}$.
\end{proof}

In addition, we conclude that the operator $\Phi$ is intertwined with the operator $\Psi$ as follows.

\begin{proposition}
We have $\Lambda^{\lor}\circ\Psi=\Phi\circ\Lambda$.
\end{proposition}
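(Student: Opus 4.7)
The plan is to unfold the definition of $\Psi$ and collapse the resulting composition using the intertwining identity already established in Theorem \ref{thm:intertwiner}, together with Lemma \ref{lem:B1lx}. Concretely, starting from the right-most factor of $\Lambda^{\lor}\circ\Psi=\Lambda^{\lor}\circ\mathcal{X}^{\lor}\circ\Phi\circ\mathcal{X}^{-1}$, I would first apply Lemma \ref{lem:B1lx} to recognise $\Lambda^{\lor}\circ\mathcal{X}^{\lor}$ as the operator $B_1^{\lor}$; this converts the outer pair of bijections into the max-Belinschi-Nica operator at time $1$.

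Next I would apply Theorem \ref{thm:intertwiner} with $t=1$ to commute $B_1^{\lor}$ past $\Phi$, rewriting $B_1^{\lor}\circ\Phi$ as $\Phi\circ B_1$. At this point the composition reads $\Phi\circ B_1\circ \mathcal{X}^{-1}$. To finish, I would use the very definition of the Boolean-classical Bercovici-Pata bijection: since $\mathcal{X}=\Lambda^{-1}\circ B_1$, its inverse is $\mathcal{X}^{-1}=B_1^{-1}\circ\Lambda$, so $B_1\circ \mathcal{X}^{-1}=B_1\circ B_1^{-1}\circ \Lambda=\Lambda$, yielding $\Phi\circ\Lambda$ as desired.

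In a compact display this is simply
\begin{align*}
\Lambda^{\lor}\circ\Psi
&=\Lambda^{\lor}\circ\mathcal{X}^{\lor}\circ\Phi\circ\mathcal{X}^{-1}
=B_1^{\lor}\circ\Phi\circ\mathcal{X}^{-1} \\
&=\Phi\circ B_1\circ\mathcal{X}^{-1}
=\Phi\circ B_1\circ B_1^{-1}\circ\Lambda
=\Phi\circ\Lambda.
\end{align*}

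There is no serious obstacle: the statement is essentially a bookkeeping consequence of the three structural facts already in hand (the definitions of $\Psi$ and $\mathcal{X}$, the factorisation $B_1^{\lor}=\Lambda^{\lor}\circ\mathcal{X}^{\lor}$, and the $\Phi$-intertwining of $B_t$ and $B_t^{\lor}$). The only point worth a brief sanity check is domain compatibility: one should verify at the start that for $\mu\in\text{ID}_+$ each intermediate object lies in the expected class, namely $\mathcal{X}^{-1}(\mu)\in\mathcal{P}_+$, $\Lambda(\mu)\in\text{FID}_+\subseteq\mathcal{P}_+$, and $B_1$ is applied on $\mathcal{P}_+$ where it is defined, so that every composition makes sense; this is immediate from the definitions recalled just above the proposition.
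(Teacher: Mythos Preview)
Your proof is correct and follows essentially the same approach as the paper: unfold $\Psi$, use Lemma \ref{lem:B1lx} to recognise $\Lambda^{\lor}\circ\mathcal{X}^{\lor}=B_1^{\lor}$, apply Theorem \ref{thm:intertwiner} at $t=1$, and collapse $B_1\circ\mathcal{X}^{-1}$ to $\Lambda$. The paper's display is slightly more compressed (it substitutes $\mathcal{X}^{-1}=B_1^{-1}\circ\Lambda$ in the same line as applying Lemma \ref{lem:B1lx}), but the argument is identical.
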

\begin{proof}
By Lemma \ref{lem:B1lx} and Theorem \ref{thm:intertwiner}, we have
\begin{align*}
\Lambda^{\lor}\circ \Psi&=(\Lambda^\lor\circ\mathcal{X}^\lor )\circ \Phi\circ \mathcal{X}^{-1}\\
&=(B_1^\lor \circ \Phi\circ B_1^{-1})\circ\Lambda\\
&=\Phi\circ\Lambda.
\end{align*}
\end{proof}

According to discussions at Section 3, we get the commutative diagram.

\[
   \begin{CD}
\text{(Classical)} @. \text{(Free)} @. \text{(Boolean)}\\
\text{ID}_+  @>{\Lambda}>>  \text{FID}_+  @<{B_1}<<  \text{BID}_+ @. \hspace{3mm}\text{(Additive)}\\
@V{\Psi}VV  @VV{\Phi}V  @VV{\Phi}V\\
\mathcal{P}_+ @>>{\Lambda^\lor}>  \mathcal{P}_+ @<<{B_1^\lor}< \mathcal{P}_+ @. \hspace{3mm}\text{(Max)}\\
  \end{CD}
\]
The class $\text{BID}_+$ is the set of all Boolean infinitely divisible distributions on $[0,\infty)$. It is well known that $\text{BID}_+=\mathcal{P}_+$ (see \cite{SW97}).


\section{Examples}

In this section, we give several examples of probability measures in the classes $\Phi(\mathcal{P}_+)$ and $\Psi(\mathcal{P}_+)$

\subsection{Stable laws and extreme values}

In this section, we give relations between (strictly) stable laws and extreme value distributions. Let $\mathcal{A}$ be the set of {\it admissible parameters}:
\begin{align*}
\mathcal{A}:=\{(\alpha,\rho):\alpha\in(0,1],\rho\in[0,1]\}\cup \{(\alpha,\rho):\alpha\in(1,2],\rho\in[1-\alpha^{-1},\alpha^{-1}]\}.
\end{align*}
Consider $(\alpha,\rho)\in\mathcal{A}$. Denote by $c_{\alpha,\rho}$ the {\it classical strictly stable law} (see e.g. \cite{Sato}), $f_{\alpha,\rho}$ the {\it free strictly stable law} (see \cite{BV93,BP99}) and $b_{\alpha,\rho}$ the {\it Boolean strictly stable law} (see \cite{SW97}). In particular, we define $c_{\alpha}^+:=c_{\alpha,1}\in \text{ID}_+$, $f_{\alpha}^+:=f_{\alpha,1}\in\text{FID}_+$ and $b_{\alpha}^+:=b_{\alpha,1}\in\mathcal{P}_+$. Note that $c_1^+=f_1^+=b_1^+=\delta_1$. For all $\alpha\in(0,1)$, the strictly stable laws $c_{\alpha}^+$, $f_{\alpha}^+$ and $b_{\alpha}^+$ are not delta measure. Thus we may assume that $\alpha\in (0,1)$ in this section.

In \cite{AH16}, we know a relation between Boolean stable law and Boolean extreme value distribution via the operator $\Phi$.
\begin{example}\label{lem:Booleanstable}
Consider $\alpha\in(0,1)$. Then $\Phi(b_{\alpha}^+)([0,\cdot])=B_{\text{II},\frac{\alpha}{1-\alpha}}(\cdot)$. 
\end{example}

Next, we give a relation between free stable law and free extreme value distribution via the operator $\Phi$. 
\begin{example}\label{cor:freestable}
Consider $\alpha\in(0,1)$. Then $\Phi(f_{\alpha}^+)([0,\cdot])=F_{\text{II},\frac{\alpha}{1-\alpha}}(\cdot)$.
\end{example}
\begin{proof}
By \cite{BN08,BP99}, we have $B_1(b_\alpha^+)=f_{\alpha}^+$. Moreover, by \cite{U19}, we have  $B_1^\lor(B_{\text{II},\frac{\alpha}{1-\alpha}})=F_{\text{II},\frac{\alpha}{1-\alpha}}$. By Theorem \ref{thm:intertwiner} and Example \ref{lem:Booleanstable}, we have
 \begin{align*}
 \Phi(f_\alpha^+)([0,\cdot])&=\Phi(B_1(b_\alpha^+))([0,\cdot])\\
 &=B_1^\lor(\Phi(b_{\alpha}^+))([0,\cdot]))\\
 &=B_1^\lor(B_{\text{II},\frac{\alpha}{1-\alpha}}(\cdot))=F_{\text{II},\frac{\alpha}{1-\alpha}}(\cdot).
 \end{align*}
 Therefore $\Phi(f_{\alpha}^+)([0,\cdot])=F_{\text{II},\frac{\alpha}{1-\alpha}}(\cdot)$.
\end{proof}

Finally, we obtain a relation between classical stable law and classical extreme value distribution via the operator $\Psi$.
\begin{example}
Consider $\alpha\in(0,1)$. Then $\Psi(c_\alpha^+)([0,\cdot])=C_{\text{II},\frac{\alpha}{1-\alpha}}(\cdot)$.
\end{example}
\begin{proof}
By \cite{BN08,BP99}, we have $\mathcal{X}^{-1}(c_\alpha^+)=b_\alpha^+$. Moreover, by \cite{VV18}, we have $\mathcal{X}^{\lor}(B_{\text{II},\frac{\alpha}{1-\alpha}})=C_{\text{II},\frac{\alpha}{1-\alpha}}$. By Example \ref{lem:Booleanstable}, we have
\begin{align*}
\Psi(c_\alpha^+)([0,\cdot])&=\mathcal{X}^\lor \circ \Phi(b_\alpha^+)([0,\cdot])\\
&=\mathcal{X}^\lor(B_{\text{II},\frac{\alpha}{1-\alpha}}(\cdot))\\
&=C_{\text{II},\frac{\alpha}{1-\alpha}}(\cdot).
\end{align*}
Therefore $\Psi(c_\alpha^+)([0,\cdot])=C_{\text{II},\frac{\alpha}{1-\alpha}}(\cdot)$. 
\end{proof}


\subsection{Marchenko-Pastur law and uniform distribution}

In this section, we mention that $\Phi$ connects the Marchenko-Pastur law to the uniform distribution on $(0,1)$. Denote by $\pi$ the {\it Marchenko-Pastur law} (or {\it free Poisson law}), that is,
\begin{align*}
\pi(dx) :=\frac{1}{2\pi}\sqrt{\frac{4-x}{x}}\mathbf{1}_{(0,4)}(x)dx.
\end{align*}
This distribution appears as the limit of eigenvalue distributions of Wishart matrices as the size of the random matrices goes to infinity. 

It is known that the S-transform of the Marchenko-Pastur law is given by
\begin{align*}
S_\pi(z)=\frac{1}{z+1}, \qquad z\in(-1,0).
\end{align*}

For a reader's convenience, we give a proof of the above result. Denote by $_2F_1(a,b,c;z)$ the {\it Gauss hypergeometric series}:
\begin{align*}
_2F_1(a,b,c;z):=\sum_{n=0}^\infty \frac{(a)_n(b)_n}{(c)_n}\frac{z^n}{n!},
\end{align*}
where $c\neq0,-1,-2,\cdots$ and $(a)_n$ denotes the {\it Pochhammer symbol}: $(a)_n:=\prod_{k=0}^{n-1}(a+k)$, $(a)_0=1$. If $\text{Re}(c)>\text{Re}(b)>0$ and $|z|<1$, then we obtain
\begin{align*}
_2F_1(a,b,c;z)=\frac{1}{B(b,c-b)}\int_0^1 x^{b-1}(1-x)^{c-b-1}(1-zx)^{-a}dx,
\end{align*}
where $B(t,s)$ is the Beta function. In particular, we get
\begin{align*}
_2F_1(1,s,3;z)=-\frac{2\left( (s-2)z+1-(1-z)^{2-s}\right)}{(s-2)(s-1)z^2}
\end{align*}
for $s>0$, $s\neq1,2,$ (see e.g. \cite[Lemma 3.5]{MSU19}). To obtain the S-transform $S_\pi$, we firstly compute $\Psi_\pi^{-1}$. For $z \in\mathbb{C}$ with $|z|<1/4$, we have
\begin{align*}
\Psi_\pi(z)&=\int_0^4 \frac{zx}{1-zx}\cdot \frac{1}{2\pi}\sqrt{\frac{4-x}{x}}dx\\
&=\frac{8z}{\pi}\int_0^1 x^{1/2}(1-x)^{1/2}(1-4zx)^{-1}dx\\
&=\frac{8z}{\pi} \times B\left(\frac{3}{2},\frac{3}{2}\right)  {_2F_1}\left(1,\frac{3}{2},3;4z\right)\\
&=\frac{8z}{\pi}\times \frac{\pi}{8} \times\left( \frac{-2z+1-(1-4z)^{1/2}}{2z^2}\right)\\
&=\frac{-2z+1-(1-4z)^{1/2}}{2z}.
\end{align*}
Moreover, this is analytic on a neighborhood of $(-\infty,0)$. By the identity theorem, we get
\begin{align*}
\Psi_\pi(z)=\frac{-2z+1-(1-4z)^{1/2}}{2z}
\end{align*}
for $z$ in a neighborhood of $(-\infty,0)$. Then we get
\begin{align*}
z=\frac{-2\Psi_\pi^{-1}(z)+1-\sqrt{1-4\Psi_\pi^{-1}(z)}}{2\Psi_\pi^{-1}(z)}, \qquad z\in (-1,0)
\end{align*}
and therefore 
\begin{align*}
\Psi_\pi^{-1}(z)=\frac{z}{(z+1)^2}, \qquad z\in (-1,0).
\end{align*}
Hence $S_\pi(z)=\frac{z+1}{z}\Psi_\pi^{-1}(z)=\frac{1}{z+1}$ for all $z\in(-1,0)$. Therefore we obtain the following example.

\begin{example}\label{ex:marchenko-uniform}
We have
\begin{align*}
\Phi(\pi)\left([0,x]\right)=x, \qquad x\in (0,1).
\end{align*}
Thus $\Phi(\pi)=U(0,1)$, where $U(0,1)$ is the uniform distribution on $(0,1)$.
\end{example}


\subsection{Poisson law and max-compound Poisson law}

We find a relation between the Poisson law and the max-compound Poisson law with the uniform distribution on (0,1) via the map $\Psi$.  Denote by $Po(\lambda)$ the {\it Poisson law with parameter $\lambda>0$}, that is, 
\begin{align*}
Po(\lambda)=\sum_{k=0}^\infty \frac{\lambda^ke^{-\lambda}}{k!}\delta_k.
\end{align*}
Note that $\Lambda(Po(1))=\pi$ (see \cite{BP99}) and $\mathcal{X}^{-1}(Po(1))=\frac{1}{2}\delta_0+\frac{1}{2}\delta_2$ (see \cite{SW97}). Hence we obtain the following relation.

\begin{example}\label{max-Poisson}
We have 
\begin{align*}
\Psi(Po(1))=\Pi^{\lor}(U(0,1))=\frac{1}{e}\delta_0+e^{-1+x}\mathbf{1}_{(0,1)}(x)dx.
\end{align*}
\end{example}
\begin{proof}
We simply calculate the left hand side as follows. For all $x\in (0,1)$,
\begin{align*}
\Psi(Po(1))([0,x])&=(\mathcal{X}^\lor \circ \Phi )(\mathcal{X}^{-1}(Po(1)))([0,x])\\
&=\mathcal{X}^{\lor}\left(\Phi\left(\frac{1}{2}\delta_0+\frac{1}{2}\delta_2\right)\right)([0,x])\\
&=\exp\left[1-\frac{1}{\Phi\left(\frac{1}{2}\delta_0+\frac{1}{2}\delta_2\right)([0,x])}\right].
\end{align*}
Put $\sigma:=\frac{1}{2}\delta_0+\frac{1}{2}\delta_2$. Then we get $\Psi_\sigma(z)=\frac{z}{1-2z}$. Then we get $\Psi_\sigma^{-1}(z)=\frac{z}{1+2z}$ for all $z\in(-\frac{1}{2},0)$. Hence we have
\begin{align*}
S_\sigma(z)=\frac{1+z}{1+2z}, \qquad z\in \left( -\frac{1}{2},0\right).
\end{align*}
Thus we get
\begin{align*}
\Phi(\sigma)\left( \left[0, \frac{2x-1}{x}\right]\right)=x, \qquad x\in\left(\frac{1}{2},1\right),
\end{align*}
and therefore 
\begin{align}\label{eq:uniform}
\Phi(\sigma)([0,x])=\frac{1}{2-x}, \qquad x\in (0,1).
\end{align}
Hence we obtain
\begin{align*}
\Psi(Po(1))([0,x])=\exp(-1+x), \qquad x\in(0,1).
\end{align*}
Therefore its density function is given by
\begin{align*}
\frac{d\Psi(Po(1))}{dx}(x)=e^{-1+x}\mathbf{1}_{(0,1)}(x).
\end{align*}
Moreover, $\Psi(Po(1))(\{0\})=\mathcal{X}^{\lor}(\sigma)(\{0\})=e^{-1}$. 

Note that the probability measure in right hand side of Example \ref{max-Poisson} is the max-compound Poisson law with the uniform distribution on $(0,1)$. 
\end{proof}

By Theorem \ref{classical_additive_max2}, for $\lambda>0$, we get
\begin{align*}
\Psi(Po(\lambda))([0,x])&=\Psi(Po(1)^{\ast \lambda})([0,x])\\
&= \Psi(Po(1))^{\lor \lambda}([0,\lambda^{-1}x])\\
&=\Psi(Po(1))([0,\lambda^{-1}x])^\lambda\\
&=\exp(\lambda(-1+\lambda^{-1}x))=\exp(-\lambda+x),
\end{align*}
for all $x\in(0,\lambda)$. Thus, for all $\lambda>0$, we have
\begin{align*}
\Psi(Po(\lambda))=e^{-\lambda}\delta_0+e^{-\lambda+x}\mathbf{1}_{(0,\lambda)}(x)dx.
\end{align*}


\subsection{Free regular distributions}

By Example \ref{ex:marchenko-uniform} and \eqref{eq:uniform}, we get
\begin{align*}
\Phi(\pi)([0,\cdot])=\max\left\{ 0, 2-\frac{1}{\Phi(\frac{1}{2}\delta_0+\frac{1}{2}\delta_2)([0,\cdot])}\right\}.
\end{align*}

More generally, we get the above formula in case of free regular distributions. A probability measure $\mu$ on $[0,\infty)$ is said to be {\it free regular} if $\mu$ is freely infinitely divisible and $\mu^{\boxplus t}\in\mathcal{P}_+$ for all $t>0$. For example, the Marchenko-Pastur law $\pi$ is free regular. Recently, we proved that the Fuss-Catalan distribution $\mu(p,r)$ is free regular for $1\le r \le \min\{p-1,p/2\}$ or $p=r=1$ (see \cite{MSU19}). A free regular distribution was introduced by \cite{S11}. It is known that this distribution is a marginal law of free subordinator (see \cite{AHS13}).  The class of free regular distributions does not coincide with $\text{FID}_+$ (see also \cite{S11}). By Theorem \ref{thm:intertwiner}, we can get the following formula.

\begin{proposition}\label{cor:freeregular}
For any free regular distribution $\mu$, there exists a unique $\sigma\in\mathcal{P}_+$ such that
\begin{align*}
\Phi(\mu)([0,\cdot])=\max\left\{0, 2-\frac{1}{\Phi(\sigma)([0,\cdot])}\right\}.
\end{align*}
\end{proposition}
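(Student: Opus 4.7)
The plan is to recognize the right-hand side as $B_1^\lor(\Phi(\sigma))([0,\cdot])$ and then invoke the intertwining $\Phi\circ B_1=B_1^\lor\circ\Phi$ of Theorem \ref{thm:intertwiner} applied to $\sigma=B_1^{-1}(\mu)$. With this identification the result becomes almost immediate.

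First I would verify the auxiliary identity
\begin{align*}
B_1^\lor(\nu)([0,x])=\max\left\{0,\,2-\frac{1}{\nu([0,x])}\right\},\qquad \nu\in\mathcal{P}_+,\ x\ge 0,
\end{align*}
by unwinding the definition $B_1^\lor(\nu)=(\nu^{\Box \hspace{-.55em} \lor 2})^{\cup \hspace{-.52em} \lor 1/2}$. Substituting the free-max formula \eqref{eq:free-max} at $t=2$ gives $\nu^{\Box \hspace{-.55em} \lor 2}([0,x])=\max\{2\nu([0,x])-1,0\}$, and then plugging this into the Boolean-max formula \eqref{eq:Boolean-max} with $t=1/2$ and simplifying the resulting quotient $(2\nu-1)/\nu$ on the support yields the claim. (One takes the standard convention $1/0=+\infty$ to handle points where $\nu([0,x])=0$.)

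Next I would produce $\sigma\in\mathcal{P}_+$. Since $\mu$ is free regular, in particular $\mu$ is freely infinitely divisible on $[0,\infty)$, so $\mu\in\text{FID}_+$. Because $\mathcal{X}:\mathcal{P}_+\to\text{ID}_+$ and $\Lambda:\text{ID}_+\to\text{FID}_+$ are both bijections and $B_1=\Lambda\circ\mathcal{X}$, the map $B_1$ is a bijection $\mathcal{P}_+\to\text{FID}_+$. Therefore $\sigma:=B_1^{-1}(\mu)$ exists and is uniquely determined in $\mathcal{P}_+$. Applying Theorem \ref{thm:intertwiner} gives
\begin{align*}
\Phi(\mu)=\Phi(B_1(\sigma))=B_1^\lor(\Phi(\sigma)),
\end{align*}
and the auxiliary identity from the previous paragraph converts the right-hand side into the desired form.

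For the uniqueness assertion, the formula forces $\Phi(\sigma)([0,x])=1/(2-\Phi(\mu)([0,x]))$ at every $x$ where the right-hand side lies in $(0,1)$, so $\Phi(\sigma)$ is determined by $\Phi(\mu)$; combined with the injectivity of $\Phi$ on $\mathcal{P}_+$ (which follows from Proposition \ref{prop:HMlimit} and Lemma \ref{lem:pS} since $S_\sigma$ together with $\sigma(\{0\})=\Phi(\sigma)(\{0\})$ recovers $\sigma$), this pins down $\sigma$. I do not anticipate any real obstacle: the only delicate point is the bookkeeping at boundary values of $\Phi(\sigma)([0,x])$, which is precisely what the outer $\max\{0,\cdot\}$ absorbs.
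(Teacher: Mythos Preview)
Your proposal is correct and follows the same approach as the paper: both recognize the right-hand side as $B_1^\lor(\Phi(\sigma))([0,\cdot])$, set $\sigma=B_1^{-1}(\mu)$, and apply Theorem \ref{thm:intertwiner}. The only minor difference is that the paper cites \cite{AHS13} directly for the existence and uniqueness of $\sigma\in\mathcal{P}_+$ with $B_1(\sigma)=\mu$ when $\mu$ is free regular, whereas you obtain it from the bijectivity of $B_1:\mathcal{P}_+\to\text{FID}_+$ recorded in Section~3.4; your explicit verification of the $B_1^\lor$ formula and your uniqueness argument via injectivity of $\Phi$ are more detailed than the paper's treatment but not substantively different.
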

\begin{proof}
If $\mu$ is a Dirac measure, then we can take $\sigma=\mu$ to satisfy the above equation. Therefore we may assume that $\mu$ is not a Dirac measure. In \cite{AHS13}, there exists a unique $\sigma\in\mathcal{P}_+$ not being a Dirac measure such that $\mu=B_1(\sigma)$. By Theorem \ref{thm:intertwiner},
\begin{align*}
\max\left\{0, 2-\frac{1}{\Phi(\sigma)([0,\cdot])}\right\}&=B_1^\lor(\Phi(\sigma))([0,\cdot])\\
&=\Phi(B_1(\sigma))([0,\cdot])=\Phi(\mu)([0,\cdot]).
\end{align*}
Therefore we obtain the above equation.
\end{proof}

\subsection{Infinitely divisible distributions with regular L\'{e}vy-Khintchine representations}

In the proof of Example \ref{max-Poisson}, we have
 \begin{align*}
\Psi(Po(1))([0,x])=\exp\left[1-\frac{1}{\Phi\left(\frac{1}{2}\delta_0+\frac{1}{2}\delta_2\right)([0,x])}\right].
\end{align*}
More generally, we get the above formula in case of infinitely divisible distributions with regular L\'{e}vy-Khintchine representations which are marginal distributions of subordinators (see e.g. \cite{Ber99, Sato}). A probability measure $\mu\in\text{ID}_+$ is said to have a {\it regular L\'{e}vy-Khintchine representation} (in this paper, for short, we say that $\mu$ is {\it regular}) if its characteristic function is written by
\begin{align*}
\int_\mathbb{R} e^{izx} \mu(dx)=\exp\left(i\eta z+\int_0^\infty(e^{izx}-1)\nu(dx)\right),\qquad z\in\mathbb{R},
\end{align*}
where $\eta\ge 0$ and $\nu$ is the L\'{e}vy measure satisfying $\int_0^\infty (1\land x)\nu(dx)<\infty$ and $\nu((-\infty,0])=0$. It is known that $\mu\in\text{ID}_+$ is regular if and only if $\mu^{\ast t}\in\mathcal{P}_+$ for all $t>0$. For example, Poisson law $Po(\lambda)$ and Gamma distribution are regular. It is known that if $\mu$ is regular, then $\Lambda(\mu)$ is free regular (see \cite{AHS13}). Therefore we get the following formula.

\begin{proposition}
For a regular distribution $\mu$, there exists a unique $\sigma\in\mathcal{P}_+$ such that 
$$\Psi(\mu)([0,\cdot])=\exp\left[1-\frac{1}{\Phi(\sigma)([0,\cdot])}\right].
$$
\end{proposition}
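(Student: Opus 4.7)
The plan is to reduce the statement to a direct unfolding of the definition $\Psi = \mathcal{X}^{\lor}\circ\Phi\circ\mathcal{X}^{-1}$, paralleling the proof of Proposition \ref{cor:freeregular}. Concretely, I would handle the Dirac case separately (taking $\sigma=\mu$ works trivially) and then assume $\mu$ is non-degenerate.

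For the main case, I would first invoke the result from \cite{AHS13} cited just before the statement: if $\mu\in\mathrm{ID}_+$ is regular, then $\Lambda(\mu)$ is free regular. Applying Proposition \ref{cor:freeregular}'s underlying fact (again from \cite{AHS13}) to $\Lambda(\mu)$ yields a unique $\sigma\in\mathcal{P}_+$ (not a Dirac measure) with $\Lambda(\mu)=B_1(\sigma)$, i.e. $\sigma=B_1^{-1}(\Lambda(\mu))=\mathcal{X}^{-1}(\mu)$. This $\sigma$ will be the measure claimed in the proposition.

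Next I would unfold the definition of $\Psi$ and apply $\mathcal{X}^\lor$ to $\Phi(\sigma)$:
\begin{align*}
\Psi(\mu)([0,\cdot])
=\mathcal{X}^{\lor}\!\bigl(\Phi(\mathcal{X}^{-1}(\mu))\bigr)([0,\cdot])
=\mathcal{X}^{\lor}(\Phi(\sigma))([0,\cdot])
=\exp\!\left[1-\frac{1}{\Phi(\sigma)([0,\cdot])}\right],
\end{align*}
where the final equality is just the defining formula $\mathcal{X}^{\lor}(\nu)([0,\cdot])=\exp[1-1/\nu([0,\cdot])]$ applied to $\nu=\Phi(\sigma)$.

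For uniqueness, suppose $\sigma'\in\mathcal{P}_+$ satisfies the same identity. Since $x\mapsto \exp[1-1/x]$ is injective on $(0,1]$, this forces $\Phi(\sigma)=\Phi(\sigma')$ as distribution functions on $[0,\infty)$. Then Proposition \ref{prop:HMlimit} together with Lemma \ref{lem:pS} lets me recover $S_{\sigma}=S_{\sigma'}$ on their common domain (via the formula $\Phi(\sigma)([0,x])=S_\sigma^{-1}(1/x)+1$ on $(a_\sigma,b_\sigma)$ and the agreement of atoms at $0$), and the S-transform uniquely determines a measure in $\mathcal{P}_+\setminus\{\delta_0\}$, so $\sigma=\sigma'$. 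There is essentially no hard step here; the only mild obstacle is being careful about the uniqueness argument, which hinges on the injectivity of $\Phi$ that is implicit in Proposition \ref{prop:HMlimit}, and on verifying that the $\sigma$ produced by applying the $\Lambda$–$B_1$ machinery really lies in $\mathcal{P}_+$ (rather than merely in $\mathcal{P}$), which is precisely the content of the \cite{AHS13} input.
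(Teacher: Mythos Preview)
Your proposal is correct and follows essentially the same route as the paper: handle the Dirac case trivially, use the \cite{AHS13} input that regularity of $\mu$ makes $\Lambda(\mu)$ free regular so that there is a unique $\sigma\in\mathcal{P}_+$ with $\Lambda(\mu)=B_1(\sigma)$, identify $\sigma=\mathcal{X}^{-1}(\mu)$, and then unfold $\Psi=\mathcal{X}^{\lor}\circ\Phi\circ\mathcal{X}^{-1}$. The paper's proof stops there, taking uniqueness directly from the $B_1$-preimage statement in \cite{AHS13}; your extra paragraph arguing uniqueness via injectivity of $\mathcal{X}^{\lor}$ and of $\Phi$ goes slightly beyond what the paper writes but is not a different method.
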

\begin{proof}
If $\mu$ is a Dirac measure, then we take $\sigma=\mu$. We may assume that $\mu$ is not a Dirac measure. For a regular distribution $\mu\in\mathcal{P}_+$, the measure $\Lambda(\mu)$ is free regular. By \cite{AHS13}, there exists a unique $\sigma\in\mathcal{P}_+$ such that $\Lambda(\mu)=B_1(\sigma)$. Then
\begin{align*}
\Psi(\mu)([0,\cdot])&=\mathcal{X}^{\lor}\circ \Phi \circ B_1^{-1} (\Lambda(\mu))([0,\cdot])\\
&=\mathcal{X}^{\lor}\circ \Phi \circ B_1^{-1} (B_1(\sigma))([0,\cdot])\\
&=\mathcal{X}^{\lor}\circ \Phi (\sigma)([0,\cdot])\\
&=\exp\left[1-\frac{1}{\Phi(\sigma)([0,\cdot])}\right].
\end{align*}
\end{proof}


\section{Limit theorems for extreme values}


\subsection{Extreme values and Marchenko-Pastur laws}
Let $(\mathcal{M},\tau)$ be a tracial $W^*$-probability space. Suppose that $\{U_n\}_n$ is a sequence of freely independent identically distributed (bounded and positive) random variables in $(\mathcal{M},\tau)$ and $U_1\sim U(0,1)$. Define
\begin{align*}
\tilde{U}_n:=U_1\lor \cdots \lor U_n,\qquad n\in\mathbb{N}.
\end{align*}
Then we have
\begin{align*}
\tau(E_{\tilde{U}_n}([0,x]))&=\max \{n \tau(E_{U_1}([0,x]))-(n-1),0 \}\\
&=\begin{cases}
0, & 0 \le x <1-1/n\\
nx-(n-1) & 1-1/n\le x <1\\
1, & x\ge 1.
\end{cases}
\end{align*}

\begin{proposition}\label{prop:unif}
Let $F$ be a distribution function on $\mathbb{R}$. Then
\begin{align*}
\tau(E_{\tilde{U}_n}([0,F^{1/n}]))\xrightarrow{w} \Lambda^{\lor}(F), \qquad n\rightarrow\infty.
\end{align*}
In particular, we have
\begin{align*}
\tau(E_{\tilde{U}_n}([0,C_{\text{I}}^{1/n}]))&\xrightarrow{w}  F_{\text{I}},\\
\tau(E_{\tilde{U}_n}([0,C_{\text{II},\alpha}^{1/n}]))&\xrightarrow{w}  F_{\text{II},\alpha},\\
\tau(E_{\tilde{U}_n}([0,C_{\text{III},\alpha}^{1/n}]))&\xrightarrow{w}  F_{\text{III},\alpha},
\end{align*}
as $n\rightarrow\infty$ and $\alpha>0$.
\end{proposition}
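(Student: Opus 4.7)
The plan is to substitute $x=F(y)^{1/n}\in[0,1]$ into the piecewise expression for $\tau(E_{\tilde U_n}([0,x]))$ displayed immediately before the statement, and then to pass to the pointwise limit in $n$. Fix $y\in\mathbb{R}$ and write $a:=F(y)\in[0,1]$. The numerical input I will use is that the threshold $1-1/n$ in that formula satisfies $(1-1/n)^n\nearrow e^{-1}$ as $n\to\infty$, which is precisely the threshold appearing in $\Lambda^\lor(F)(y)=\max\{0,1+\log a\}$.

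Next I split into three regimes according to the size of $a$. If $a<e^{-1}$ (including $a=0$), then eventually $(1-1/n)^n>a$, so $a^{1/n}<1-1/n$ and the first branch of the piecewise formula gives value $0=\Lambda^\lor(F)(y)$. If $e^{-1}<a<1$, then $a^{1/n}\in[1-1/n,1)$ for every $n$, so the middle branch applies and
\[
n\,a^{1/n}-(n-1)=1+n\bigl(e^{(\log a)/n}-1\bigr)\;\longrightarrow\;1+\log a=\Lambda^\lor(F)(y).
\]
If $a=1$, the top branch is in force and both sides equal $1$. This yields the claimed pointwise convergence at every $y$ except possibly the borderline $F(y)=e^{-1}$, which does not matter since weak convergence of distribution functions only has to be verified at continuity points of the limit.

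The three named specializations then follow by recording the identities
\begin{align*}
\Lambda^\lor(C_{\text{I}})(x) &= \max\{0,1-e^{-x}\}=F_{\text{I}}(x),\\
\Lambda^\lor(C_{\text{II},\alpha})(x) &= \max\{0,1-x^{-\alpha}\}=F_{\text{II},\alpha}(x),\\
\Lambda^\lor(C_{\text{III},\alpha})(x) &= \max\{0,1-|x|^\alpha\}=F_{\text{III},\alpha}(x),
\end{align*}
which are immediate from the definitions of the classical extreme value distributions and of $\Lambda^\lor$ recalled in the paper, so the three convergences in the proposition are direct consequences of the general statement applied with $F=C_{\text{I}},C_{\text{II},\alpha},C_{\text{III},\alpha}$. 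The argument is essentially computational; the only mildly delicate point is the weak-convergence caveat at the level set $\{F=e^{-1}\}$, and the single nontrivial asymptotic needed is the elementary $n(a^{1/n}-1)\to\log a$.
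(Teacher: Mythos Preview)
Your approach is essentially the same as the paper's: substitute $F(y)^{1/n}$ into the piecewise formula for $\tau(E_{\tilde U_n}([0,\cdot]))$ and pass to the limit using $n(a^{1/n}-1)\to\log a$, then read off the three special cases from $\Lambda^\lor(C_\bullet)=F_\bullet$. One small correction to the point you flag as delicate: the borderline $a=e^{-1}$ cannot be dismissed on the grounds that such $y$ are discontinuities of $\Lambda^\lor(F)$ (they need not be, e.g.\ when $F$ is continuous at $y$), but no special treatment is actually needed since $e^{-1}\ge(1-1/n)^n$ for every $n$, so your middle-branch computation already applies and gives limit $1+\log e^{-1}=0=\Lambda^\lor(F)(y)$.
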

\begin{proof}
Denote by $C(F)$ the set of all points of continuity of $F$. For all $x\in C(F)$, we have
\begin{align*}
\tau(E_{\tilde{U}_n}([0,F(x)^{1/n}]))&=\begin{cases}
0, & 0 \le F(x) <(1-1/n)^n\\
nF(x)^{1/n}-(n-1) & (1-1/n)^n\le F(x) <1\\
1, & F(x)\ge 1.
\end{cases}\\
&\xrightarrow{n\rightarrow\infty}\begin{cases}
0, & 0\le F(x)<e^{-1}\\
1+\log F(x), & e^{-1}\le F(x)<1\\
1, & F(x)\ge1.
\end{cases}\\
&=\Lambda^{\lor}(F(x)).
\end{align*}
The last assersion is clear since $\Lambda^{\lor}$ maps the classical extreme values to the corresponding type free extreme values.
\end{proof}

Recall that $\Phi(\pi)$ is the uniform distribution on $(0,1)$. Hence we get the following convergence.

\begin{corollary}\label{cor:freePoisson}
For any distribution functions $F$ on $\mathbb{R}$,
\begin{align*}
\Phi(\pi^{\boxplus n})([0,nF^{1/n}])\xrightarrow{w} \Lambda^{\lor}(F), \qquad n\rightarrow\infty.
\end{align*}
\end{corollary}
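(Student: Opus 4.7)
The plan is to reduce the statement to a direct computation by combining Theorem \ref{thm:free_additive_max}, Lemma \ref{lem:dP}, and Example \ref{ex:marchenko-uniform}, and then take the pointwise limit of the resulting explicit expression.

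First I would apply Theorem \ref{thm:free_additive_max} to $\mu = \pi$ with $t = n \geq 1$, which gives $\Phi(D_{1/n}(\pi^{\boxplus n})) = \Phi(\pi)^{\Box \hspace{-.55em} \lor n}$. By Example \ref{ex:marchenko-uniform}, $\Phi(\pi) = U(0,1)$, so by the defining formula \eqref{eq:free-max} for the free max-convolution powers,
\begin{align*}
\Phi(\pi)^{\Box \hspace{-.55em} \lor n}([0,x]) = \max\{n x - (n-1), 0\}, \qquad x \in [0,1].
\end{align*}
Next, Lemma \ref{lem:dP} together with the definition of dilation gives
\begin{align*}
\Phi(D_{1/n}(\pi^{\boxplus n}))([0,x]) = D_{1/n}(\Phi(\pi^{\boxplus n}))([0,x]) = \Phi(\pi^{\boxplus n})([0,nx]),
\end{align*}
so that
\begin{align*}
\Phi(\pi^{\boxplus n})([0,nx]) = \max\{nx - (n-1), 0\}, \qquad x \in [0,1].
\end{align*}

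Now substitute $x = F(y)^{1/n}$ for a fixed point of continuity $y$ of $F$. Since $F(y) \in [0,1]$, we have $F(y)^{1/n} \in [0,1]$, hence
\begin{align*}
\Phi(\pi^{\boxplus n})\!\left([0, n F(y)^{1/n}]\right) = \max\!\left\{n F(y)^{1/n} - (n-1),\, 0\right\}.
\end{align*}
The pointwise limit was essentially computed in the proof of Proposition \ref{prop:unif}: writing $n F(y)^{1/n} - (n-1) = 1 + n(F(y)^{1/n} - 1)$ and using $F(y)^{1/n} = 1 + n^{-1}\log F(y) + O(n^{-2})$ for $F(y) \in (0,1]$, the quantity tends to $1 + \log F(y)$. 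Combining the cases $F(y) \in [0, e^{-1})$, $F(y) \in [e^{-1},1]$, and the trivial boundary values yields
\begin{align*}
\max\!\left\{n F(y)^{1/n} - (n-1),\, 0\right\} \xrightarrow{n\to\infty} \max\{0, 1 + \log F(y)\} = \Lambda^{\lor}(F)(y).
\end{align*}
Since this convergence holds at every point of continuity of $F$ (which coincides with every point of continuity of $\Lambda^{\lor}(F)$ apart possibly from the boundary point where $F = e^{-1}$), it gives weak convergence, proving the corollary.

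There is no real obstacle here; the only minor points to check are that $x = F(y)^{1/n}$ lies in the domain where the explicit formula applies and that the convergence at boundary values ($F(y) = 0$ or $F(y) = 1$) is consistent. Both are straightforward since the formulas are continuous and the limit $\Lambda^{\lor}(F)$ is itself a distribution function.
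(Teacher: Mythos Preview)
Your proof is correct and follows essentially the same route as the paper's: the paper applies Theorem~\ref{thm:free_additive_max} (together with Lemma~\ref{lem:dP}) to write $\Phi(\pi^{\boxplus n})([0,nF^{1/n}])=\Phi(\pi)^{\Box \hspace{-.55em}\lor n}([0,F^{1/n}])$, then invokes Example~\ref{ex:marchenko-uniform} and Proposition~\ref{prop:unif} to pass to the limit. The only cosmetic difference is that you unwind the limit computation from Proposition~\ref{prop:unif} inline rather than citing it as a black box.
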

\begin{proof}
By Theorem \ref{thm:free_additive_max}, we have 
\begin{align*}
\Phi(\pi^{\boxplus n})([0,nF^{1/n}])=\Phi(D_{1/n}(\pi^{\boxplus n}))([0,F^{1/n}])=\Phi(\pi)^{\Box \hspace{-.55em} \lor n}([0,F^{1/n}]).
\end{align*}
By the above discussion and by Proposition \ref{prop:unif}, we can obtain the above convergence.
\end{proof}

Next we suppose that $\{V_n\}_n$ is a sequence of Boolean independent identically distributed (bounded and positive) random variables in $(\mathcal{M},\tau)$ and $V_1\sim U(0,1)$. Define
\begin{align*}
\tilde{V}_n:=V_1\lor \cdots \lor V_n, \qquad n\in\mathbb{N}.
\end{align*}
Then we have
\begin{align*}
\tau(E_{\tilde{V}_n}([0,x]))&=\frac{\tau(E_{V_1}([0,x]))}{n-(n-1)\tau(E_{V_1}([0,x]))}\\
&=\begin{cases}
1, & x\ge 1\\
\frac{x}{n-(n-1)x} & 0\le x < 1.
\end{cases}
\end{align*}

\begin{proposition}\label{prop:unifboolean}
Let $F$ be a distribution function on $\mathbb{R}$. Then
\begin{align*}
\tau(E_{\tilde{V}_n}([0,F^{1/n}]))\xrightarrow{w} (\mathcal{X}^\lor)^{-1} (F), \qquad n\rightarrow\infty.
\end{align*}
In particular, we have $\tau(E_{\tilde{V}_n}([0,C_{\text{II},\alpha}^{1/n}]))\xrightarrow{w} B_{\text{II},\alpha}$ as $n\rightarrow\infty$ and $\alpha>0$.
\end{proposition}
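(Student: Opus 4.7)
The plan is to proceed in complete parallel with the proof of Proposition \ref{prop:unif}: substitute the explicit distribution function of $\tilde V_n$ displayed immediately before the proposition, evaluate it at $F(y)^{1/n}$, and take the pointwise limit at each continuity point of $F$.

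First, since $V_1\sim U(0,1)$, at a continuity point $y$ of $F$ with $F(y)\in(0,1)$ one has
\[
\tau(E_{\tilde V_n}([0,F(y)^{1/n}])) = \frac{F(y)^{1/n}}{n - (n-1)F(y)^{1/n}}.
\]
The core step is then a Taylor expansion: writing $F(y)^{1/n} = 1 + n^{-1}\log F(y) + o(n^{-1})$, the denominator becomes $1 - \log F(y) + o(1)$ while the numerator tends to $1$, so the ratio converges to $\frac{1}{1 - \log F(y)}$. By the explicit formula for $(\mathcal{X}^{\lor})^{-1}$ recalled in Section 2.3.3, this is precisely $(\mathcal{X}^{\lor})^{-1}(F(y))$. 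Since $(\mathcal{X}^{\lor})^{-1}$ is continuous on $[0,1]$, the continuity points of the limiting distribution function coincide with those of $F$, and pointwise convergence at these points yields the desired weak convergence.

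For the stated special case, it suffices to compute
\[
(\mathcal{X}^{\lor})^{-1}(C_{\text{II},\alpha}(x)) = \frac{1}{1 - \log \exp(-x^{-\alpha})} = \frac{1}{1 + x^{-\alpha}} = B_{\text{II},\alpha}(x),
\]
matching the Dagum distribution. I do not foresee any serious obstacle; the only mild care required is the treatment of the boundary values $F(y)\in\{0,1\}$, but in both cases the formula collapses directly to the correct boundary value of $(\mathcal{X}^{\lor})^{-1}$, namely $0$ and $1$ respectively.
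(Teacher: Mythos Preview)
Your proposal is correct and follows essentially the same approach as the paper: both substitute the explicit Boolean-max distribution function of $\tilde V_n$ at $F^{1/n}$ and pass to the limit, the only cosmetic difference being that the paper first divides numerator and denominator by $F^{1/n}$ to rewrite the expression as $\frac{1}{1-n(1-F^{-1/n})}$ before letting $n\to\infty$, whereas you Taylor-expand the original fraction directly. The special case is handled identically, via the identity $(\mathcal{X}^{\lor})^{-1}(C_{\text{II},\alpha})=B_{\text{II},\alpha}$.
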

\begin{proof}
For any distribution functions $F$ on $\mathbb{R}$,
\begin{align*}
\tau(E_{\tilde{V}_n}([0,F^{1/n}]))&=\frac{F^{1/n}}{n-(n-1)F^{1/n}}\\
&=\frac{1}{1-n(1-F^{-1/n})}\\
&\xrightarrow{w} \frac{1}{1-\log F}=(\mathcal{X}^\lor)^{-1} (F).
\end{align*}
The last assersion is clear since $(\mathcal{X}^\lor)^{-1}(C_{\text{II},\alpha})=B_{\text{II},\alpha}$.
\end{proof}

As the same argument of Corollary \ref{cor:freePoisson}, we get the following convergence.

\begin{corollary}
For any distribution functions $F$ on $\mathbb{R}$,
\begin{align*}
\Phi(\pi^{\uplus n})([0,nF^{1/n}])\xrightarrow{w} (\mathcal{X}^{\lor})^{-1}(F), \qquad n\rightarrow\infty.
\end{align*}
\end{corollary}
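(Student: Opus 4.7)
The plan is to mirror the proof of Corollary \ref{cor:freePoisson}, but using the Boolean analogue (Theorem \ref{thm:Boolean_additive_max}) in place of the free analogue (Theorem \ref{thm:free_additive_max}), and using Proposition \ref{prop:unifboolean} in place of Proposition \ref{prop:unif}. The key observation is that $\Phi(\pi)=U(0,1)$ by Example \ref{ex:marchenko-uniform}, so we should aim to identify the measure on the left-hand side with the Boolean-max iterate of the uniform law, which is precisely what Proposition \ref{prop:unifboolean} is designed to handle.

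First I would use Lemma \ref{lem:dP} to commute $\Phi$ past the dilation and write
\[
\Phi(\pi^{\uplus n})([0,nF^{1/n}])=D_{1/n}\bigl(\Phi(\pi^{\uplus n})\bigr)([0,F^{1/n}])=\Phi\bigl(D_{1/n}(\pi^{\uplus n})\bigr)([0,F^{1/n}]).
\]
Then by Theorem \ref{thm:Boolean_additive_max} applied to $\mu=\pi$ and $t=n$, this equals $\Phi(\pi)^{\cup \hspace{-.52em} \lor n}([0,F^{1/n}])$. Substituting Example \ref{ex:marchenko-uniform}, this is $U(0,1)^{\cup \hspace{-.52em} \lor n}([0,F^{1/n}])$.

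Next I would recognize $U(0,1)^{\cup \hspace{-.52em} \lor n}([0,x])$ as exactly $\tau(E_{\tilde V_n}([0,x]))$ from the computation preceding Proposition \ref{prop:unifboolean}, since $\tilde V_n$ is the Boolean max of $n$ independent copies of $V_1\sim U(0,1)$. Therefore
\[
\Phi(\pi^{\uplus n})([0,nF^{1/n}])=\tau\bigl(E_{\tilde V_n}([0,F^{1/n}])\bigr),
\]
and Proposition \ref{prop:unifboolean} gives the weak convergence to $(\mathcal{X}^\lor)^{-1}(F)$ as $n\to\infty$.

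There is no real obstacle here; the argument is a direct assembly of Lemma \ref{lem:dP}, Theorem \ref{thm:Boolean_additive_max}, Example \ref{ex:marchenko-uniform}, and Proposition \ref{prop:unifboolean}. The only minor point to be careful about is that the convergence in Proposition \ref{prop:unifboolean} is pointwise at continuity points of the limit $(\mathcal{X}^\lor)^{-1}(F)$, so I would briefly note that this suffices for weak convergence of the corresponding distribution functions in the usual sense.
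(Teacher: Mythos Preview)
Your proposal is correct and follows essentially the same route as the paper's proof: rewrite the left-hand side via the dilation commutation (Lemma \ref{lem:dP}) and Theorem \ref{thm:Boolean_additive_max} as $\Phi(\pi)^{\cup \hspace{-.52em} \lor n}([0,F^{1/n}])$, identify $\Phi(\pi)=U(0,1)$, and conclude by Proposition \ref{prop:unifboolean}. The paper's version is terser (it omits the explicit citation of Lemma \ref{lem:dP} and Example \ref{ex:marchenko-uniform}), but the argument is the same.
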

\begin{proof}
By Theorem \ref{thm:Boolean_additive_max}, we have 
\begin{align*}
\Phi(\pi^{\uplus n})([0,nF^{1/n}])=\Phi(D_{1/n}(\pi^{\uplus n}))([0,F^{1/n}])=\Phi(\pi)^{\cup \hspace{-.55em} \lor n}([0,F^{1/n}]).
\end{align*}
By the above discussion and by Proposition \ref{prop:unifboolean}, we can obtain the above convergence.
\end{proof}


\subsection{Extreme values and free multiplicative convolution}
In this section, we obtain a relation between free/Boolean extreme values and free multiplicative convolution.

\begin{proposition}\label{prop:freemultiplicative}
For $n\in\mathbb{N}$ and $\mu\in\mathcal{P}_+$, we have
\begin{align}\label{boxtimes}
\Phi(\mu^{\boxtimes n})([0,x^n])=\Phi(\mu)([0,x]).
\end{align}
\end{proposition}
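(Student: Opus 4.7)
The plan is to reduce the claim to the multiplicativity of the $S$-transform, $S_{\mu^{\boxtimes n}} = S_\mu^{n}$, combined with the characterization of $\Phi$ in Lemma \ref{lem:pS}. Both ingredients are already available in the paper, so the proof should be essentially a one-line calculation on the interval where Lemma \ref{lem:pS} applies, followed by a short bookkeeping argument to extend the identity to all $x\ge 0$.

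First, I would dispose of the Dirac case: if $\mu=\delta_a$ then $\mu^{\boxtimes n}=\delta_{a^n}$, $\Phi$ fixes Dirac measures, and both sides of \eqref{boxtimes} equal $\mathbf{1}_{[a,\infty)}(x)$. From now on assume $\mu$ is not a Dirac measure. Using Lemma \ref{lem:image} and the identity $S_{\mu^{\boxtimes n}}(z)=S_\mu(z)^n$ on the common interval $(\mu(\{0\})-1,0)$ (recalling $\mu^{\boxtimes n}(\{0\})=\mu(\{0\})$ from \eqref{atom:freemulti}), I would deduce $a_{\mu^{\boxtimes n}}=a_\mu^{\,n}$ and $b_{\mu^{\boxtimes n}}=b_\mu^{\,n}$, so that $x\in(a_\mu,b_\mu)$ if and only if $x^n\in(a_{\mu^{\boxtimes n}},b_{\mu^{\boxtimes n}})$.

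For such $x$, set $z:=S_\mu^{-1}(1/x)\in(\mu(\{0\})-1,0)$. Then $S_{\mu^{\boxtimes n}}(z)=S_\mu(z)^n=1/x^n$, so $z=S_{\mu^{\boxtimes n}}^{-1}(1/x^n)$. Applying Lemma \ref{lem:pS} to $\mu$ and to $\mu^{\boxtimes n}$ then gives
\begin{align*}
\Phi(\mu^{\boxtimes n})([0,x^n]) \;=\; S_{\mu^{\boxtimes n}}^{-1}\!\left(\frac{1}{x^n}\right)+1 \;=\; z+1 \;=\; S_\mu^{-1}\!\left(\frac{1}{x}\right)+1 \;=\; \Phi(\mu)([0,x]).
\end{align*}

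Finally I would extend the identity to all $x\ge 0$ using Remark \ref{rem:support}: the support of $\Phi(\mu)$ is the closure of $(a_\mu,b_\mu)$ and the support of $\Phi(\mu^{\boxtimes n})$ is the closure of $(a_\mu^{\,n},b_\mu^{\,n})$. For $x\ge b_\mu$ both sides equal $1$; for $0\le x<a_\mu$ (which forces $\mu(\{0\})=0$, since otherwise $a_\mu=0$) both sides equal $\mu(\{0\})=\mu^{\boxtimes n}(\{0\})$; and continuity of the measures in the interior handles the boundary values. The only mildly delicate step is this case analysis at the endpoints, but it is routine once the atom identity $\mu^{\boxtimes n}(\{0\})=\mu(\{0\})$ and the support description are invoked, so I do not anticipate a real obstacle.
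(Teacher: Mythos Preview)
Your proof is correct and follows essentially the same route as the paper's: both reduce the identity to $S_{\mu^{\boxtimes n}}=S_\mu^{\,n}$ combined with the characterization of $\Phi$ via Lemma~\ref{lem:pS} (equivalently Proposition~\ref{prop:HMlimit}), after noting $\mu^{\boxtimes n}(\{0\})=\mu(\{0\})$. Your version is in fact slightly more thorough, since you explicitly identify $a_{\mu^{\boxtimes n}}=a_\mu^{\,n}$, $b_{\mu^{\boxtimes n}}=b_\mu^{\,n}$ and extend the identity to all $x\ge 0$, whereas the paper's proof stops once the equality is established on $(a_\mu,b_\mu)$.
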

\begin{proof}
If $\mu$ is a Dirac measure $\delta_a$ for some $a\ge 0$, then $\mu^{\boxtimes n}=\delta_{a^n}$. Therefore we get the equation \eqref{boxtimes}. Therefore we may assume that $\mu$ is not a Dirac measure. Note that $\mu^{\boxtimes n}(\{0\})=\mu(\{0\})$ for all $n\in\mathbb{N}$. By Proposition \ref{prop:HMlimit} and a property of $S$-transform, we have
\begin{align}\label{eq:free multi}
z=\Phi(\mu^{\boxtimes n})\left(\left[ 0,\frac{1}{S_{\mu^{\boxtimes n}}(z-1)}\right] \right)=\Phi(\mu^{\boxtimes n})\left( \left[ 0,\frac{1}{S_\mu(z-1)^n}\right]\right)
\end{align}
for all $z\in (\mu(\{0\}),1)$. Take $z=S_\mu^{-1}(1/x)+1\in (\mu(\{0\}),1)$ in \eqref{eq:free multi}. Then
\begin{align}\label{eq:S-multi}
\Phi(\mu^{\boxtimes n})\left( \left[ 0,\frac{1}{S_\mu(S_\mu^{-1}(1/x)+1-1)^n}\right]\right)=S_\mu^{-1}\left(\frac{1}{x}\right)+1.
\end{align}
Since $S_\mu^{-1}(1/x)+1=\Phi(\mu)([0,x])$ by Lemma \ref{lem:pS}, the equation \eqref{eq:S-multi} holds if and only if \eqref{boxtimes} does.
\end{proof}

Take $\mu=f_\alpha^+$ or $\mu=b_\alpha^+$ in \eqref{boxtimes}. Then we get the following formula.
\begin{corollary}\label{cor:FB}
For $\alpha\in(0,1)$ and $n\in\mathbb{N}$, we have
\begin{align*}
\Phi({f_\alpha^+}^{\boxtimes n})([0,x^n])&=F_{\text{II}, \frac{\alpha}{1-\alpha}}(x).\\
\Phi({b_\alpha^+}^{\boxtimes n})([0,x^n])&=B_{\text{II}, \frac{\alpha}{1-\alpha}}(x).
\end{align*}
\end{corollary}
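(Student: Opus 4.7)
The proof is essentially immediate: Corollary \ref{cor:FB} is just the specialization of Proposition \ref{prop:freemultiplicative} to the cases $\mu = f_\alpha^+$ and $\mu = b_\alpha^+$, combined with the explicit computations of $\Phi$ on these two measures that appear in Examples \ref{lem:Booleanstable} and \ref{cor:freestable}.

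The plan is as follows. First, I note that both $f_\alpha^+$ and $b_\alpha^+$ lie in $\mathcal{P}_+$ (they are the classical, free, and Boolean strictly positive stable laws for $\alpha \in (0,1)$), and they are not Dirac measures, so Proposition \ref{prop:freemultiplicative} applies nontrivially. Substituting $\mu = f_\alpha^+$ into the identity
\[
\Phi(\mu^{\boxtimes n})([0,x^n]) = \Phi(\mu)([0,x])
\]
and invoking Example \ref{cor:freestable}, which gives $\Phi(f_\alpha^+)([0,\cdot]) = F_{\mathrm{II}, \frac{\alpha}{1-\alpha}}(\cdot)$, I immediately obtain the first equation. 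Likewise, substituting $\mu = b_\alpha^+$ and using Example \ref{lem:Booleanstable}, which gives $\Phi(b_\alpha^+)([0,\cdot]) = B_{\mathrm{II}, \frac{\alpha}{1-\alpha}}(\cdot)$, I get the second equation.

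There is essentially no obstacle: all the hard work has already been done in proving Proposition \ref{prop:freemultiplicative} (which relies on the multiplicativity of the $S$-transform under $\boxtimes$ and Lemma \ref{lem:pS}) and in establishing the two examples (which use the intertwining $B_1(b_\alpha^+) = f_\alpha^+$ from \cite{BN08,BP99}, the max-intertwining $B_1^\lor(B_{\mathrm{II},\alpha/(1-\alpha)}) = F_{\mathrm{II},\alpha/(1-\alpha)}$ from \cite{U19}, and Theorem \ref{thm:intertwiner}). Thus the corollary follows by a one-line substitution in each case, and the proof should occupy only a couple of sentences.
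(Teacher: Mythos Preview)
Your proposal is correct and matches the paper's own argument exactly: the paper simply states ``Take $\mu=f_\alpha^+$ or $\mu=b_\alpha^+$ in \eqref{boxtimes}'' immediately before the corollary, relying on Examples \ref{lem:Booleanstable} and \ref{cor:freestable} for the values of $\Phi(f_\alpha^+)$ and $\Phi(b_\alpha^+)$.
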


\begin{remark}
By using a relation in \cite{AH16}:
\begin{align*}
\pi^{\frac{1-\alpha}{\alpha}}\boxtimes f_{\alpha}^+=b_{\alpha}^+, \qquad \alpha\in(0,1),
\end{align*}
we have 
\begin{align*}
\Phi(\pi\boxtimes f_{1/2}^+)([0,\cdot])=\Phi(b_{1/2}^+)([0,\cdot])=B_{\text{II},1}(\cdot),
\end{align*}
where the last equation holds when we take $n=1$ and $\alpha=1/2$ in Corollary \ref{cor:FB}.
\end{remark}

\subsection*{Acknowledgment}
The author would like to express hearty thanks to Prof. Takahiro Hasebe (Hokkaido University) for his precious advices. This research is an outcome of Joint Seminar supported by JSPS and CNRS under the Japan-France Research Cooperative Program.


\vspace{0.6cm}
\hspace{-5.5mm}{\it Yuki Ueda\\
Department of Mathematics, Hokkaido University,\\
Kita 10, Nishi 8, Kita-Ku, Sapporo, Hokkaido, 060-0810, Japan\\
email: yuuki1114@math.sci.hokudai.ac.jp}

\end{document}